\newtheorem{thm}{Theorem}[section]
\newtheorem{lem}[thm]{Lemma}
\newtheorem{cor}[thm]{Corollary}
\newtheorem{defi}[thm]{Definition}
\newtheorem{prop}[thm]{Proposition}
\newtheorem{rem}[thm]{Remark}
\numberwithin{equation}{section}
\title{The viscous damping of three dimensional spherical gas bubble  inside unbounded compressible liquid}
\author{Lifeng Zhao\footnote{School of Mathematical Sciences, University of Science and Technology of China, Hefei, Anhui, 230026, PR China, zhaolf@ustc.edu.cn},\quad Liangchen Zou\footnote{School of Mathematical Sciences, University of Science and Technology of China, Hefei, Anhui, 230026, PR China, zlc0601@mail.ustc.edu.cn}}
\date{}
\begin{document}
\maketitle
\begin{abstract}
The present paper considers a homogeneous bubble inside an unbounded polytropic compressible liquid with viscosity. The system is governed by the Navier-Stokes equation with free boundary which is determined by the kinematic and dynamic boundary conditions on the bubble-liquid interface. The global existence of solution is proved, and the $\dot{H}^1$ asymptotic stability of the spherical equilibrium in terms of viscous damping together with a explicit decay rate is given in bare energy methods. 
\end{abstract}
\section{Introduction}
The bubble-liquid system is omnipresent in nature, and has prevalent applications in different fields. Examples include microbubble ultrasound contrast agents \cite{MUCA}, the damage to ships caused by underwater explosions \cite{RSUE}\cite{SRSS}, the bubble dynamics in magmas \cite{DBFGM}, the influence of cavitation on ship propellers \cite{CRSPD}, etc. For a collection of bubble phenomenons and applications, one can refer to the review article \cite{FSTS} by Leighton.\\
\indent The research of bubble dynamics can be traced to Rayleigh's study \cite{RAY} of spherical homogeneous gas bubble in an incompressible, inviscid liquid with surface tension, which investigated the pressure during the cavity collapse. In the incompressible, spherical symmetry case, the dynamics of the bubble-liquid system is then reduced to the well-known Rayleigh-Plesset equation. However, Rayleigh-Plesset equation failed to explain the damped oscillation of underwater explosion bubble, which was found caused by the compressibility. To this end, Keller \cite{1956Damping} modified the Rayleigh-Plesset equation by introducing a wave context. Rayleigh-Plesset equation and Keller equation have been widely studied by both numerical and mathematical methods in a great variety of settings. For a systematic overview of Rayleigh-Plesset equation, one can refer to Ohnawa and Suzuki \cite{ff07c1facba84951a1479c949d23e506} and the references therein, which investigated Rayleigh-Plesset equation and Keller equation mathematically, and presented related numerical results . \\ 
\indent When compressibility, nonlinearity and asymmetry are taken into considerations, the analysis to the bubble-liquid system becomes complicated, and the bubble-liquid system is then described by compressible Euler or Navier-Stokes equations depending on whether viscosity is considered or not on exterior domains with free boundaries. Shapiro and Weinstein \cite{RDBO} described the dynamics of homogeneous bubble surrounded by a compressible, inviscid liquid with surface tension and proved exponential radiative decays in linear approximation near the spherical equilibrium by using spherical harmonics decomposition. For inhomogeneous bubble, the recent work of Lai and Weinstein \cite{FBPGB} proved the asymptotic stability of spherical equilibrium provided the liquid external to the bubble is incompressible.\\
\indent Compared to the incompressible, spherical symmetry case, where viscosity contributes nothing to the liquid external to the gas bubble, viscosity plays an important role in the compressible setting. It was known pretty early that the radius of a pulsing gas bubble in a liquid undergoes damping induced by various mechanisms including thermal effects, energy radiated outward by sound waves and the energy lost due to viscosity \cite{doi:10.1121/1.1907675}, see also the resent book \cite{ZHANG2022131}. The present paper will focus on the viscous damping, and we consider a homogeneous gas bubble surrounded by a compressible viscous liquid with surface tension and spherical symmetry. The bubble-liquid system consists of three parts: the external liquid, the gas bubble within and the bubble-liquid interface. The liquid is governed by Navier-Stokes equations. The pressure of the homogeneous bubble is assumed to satisfy the polytropic gas law. The interface is determined by the kinematic and dynamic boundary conditions related to the liquid and bubble pressure together with the surface tension. Therefore, as a whole, the bubble-liquid system is determined by the equation system
\begin{numcases}
{} \partial_t\rho+\nabla\cdot(\rho u)=0, &$\xi\in\Omega(t)^c,\;t>0,$\\
\rho\partial_tu+\rho u\cdot\nabla u+\nabla p=\mu\nabla\cdot D(u), &$\xi\in\Omega(t)^c,\;t>0,$\\
\partial_t\Xi(z,t)=u(\Xi(z,t),t)\cdot n(\Xi(z,t),t), &$z\in\mathbb{S},\;t>0,$\\
p(\Xi(z,t),t)-\mu D(u)(\Xi(z,t),t)=p_b(t)-2\sigma H[\Xi], &$z\in\mathbb{S},\;t>0,$
\end{numcases}
where $u$, $\rho$, $p$ denote the velocity, density and pressure of the liquid external to the bubble; $\Omega(t)\subset\mathbb{R}^3$ is the space occupied by the bubble; $D(u):=\frac{1}{2}\left(\nabla u+\nabla u^T\right)$ is the stress tensor; the viscosity coefficient $\mu$ is assumed to be a positive constant. Here the bubble surface is assumed to be diffeomorphism to the unit sphere $\mathbb{S}$ through $\Xi$. $n(\Xi,t)$ denotes the outer normal vector at the $\Xi(z,t)$ on the bubble surface. $\sigma$ is the surface tension, and $H[\Xi]:=\frac{1}{2}\nabla\cdot n$ is the mean curvature at $\Xi$. The pressure of the liquid $p$ is assumed polytropic, namely, $p=C_0\rho^\gamma$ for $\gamma>1$. As mentioned above, the bubble pressure $p_b$ is assumed homogeneous and satisfies the polytropic gas law: $p_b=C_1|\Omega(t)|^{-\gamma_0}$ for $\gamma_0>1$.\\
\indent Since we restrict the study to the spherical symmetry setting, suppose that $$\rho(\xi,t)=\rho(r,t),\;u(\xi,t)=u(r,t)\frac{\xi}{r},\; \Xi(z,t)=R(t)\frac{\xi}{r}\text{, with } r=|\xi|.$$ Then the outer normal vector $n=\frac{\xi}{r}$, and the mean curvature $H[\Xi]=R^{-1}$. Hence in the spherical case, system (1.1-1.4) becomes
\begin{numcases}
{} \partial_t\rho+r^{-2}\partial_r(\rho u)=0, &$r>R(t),\;t>0,$\\
\rho\partial_t u+\rho u\partial_r u+\partial_r p=\mu\partial_r(\partial_r+\frac{2}{r})u, &$r>R(t),\;t>0,$\\
\frac{dR}{dt}=u|_{r=R(t)}, &$t>0,$\\
(p-\mu\partial_r u)|_{r=R(t)}=p_b-2\sigma R^{-1}, &$t>0.$
\end{numcases} 
The system (1.5-1.8) admits an equilibrium state, and after nondimensionalization \cite[Appendix C]{RDBO}, one can assume the equilibrium state to be $$\rho=1,\;u=0,\;R=1,\; p=\frac{Ca}{2}\rho^\gamma,\;p_b=\left(\frac{Ca}{2}+\frac{2}{We}\right)R^{-3\gamma_0},$$ where $Ca$ is called the cavitation number, and $We$ is the Weber number. The equations (1.5-1.8) are then rewritten as
\begin{numcases}
{} \partial_t\rho+r^{-2}\partial_r(\rho u)=0, &$r>R(t),\;t>0,$\\
\rho\partial_t u+\rho u\partial_r u+\frac{Ca}{2}\partial_r(\rho^\gamma) =\mu\partial_r(\partial_r+\frac{2}{r})u, &$r>R(t),\;t>0,$\\
\frac{dR}{dt}=u|_{r=R(t)}, &$t>0,$\\
(\frac{Ca}{2}\rho^\gamma-\mu\partial_r u)|_{r=R(t)}=\left(\frac{Ca}{2}+\frac{2}{We}\right)R^{-3\gamma_0}-\frac{2}{We}R^{-1}, &$t>0.$
\end{numcases} 
\indent The system (1.9)-(1.12) is a free boundary problem with a nonlinear boundary condition (1.12), so it is natural to introduce the Lagrangian coordinates. Namely, 
define the Lagrangian coordinate $x:=\int_{R(t)}^r\rho(s,t) s^2ds$. Physically, $x$ stands for the mass of liquid external to the bubble but inside a spherical domain with radius $r$. Then using (1.9), a direct calculation gives that 
\begin{equation}
\left[\begin{matrix}
\frac{\partial x}{\partial r} & \frac{\partial x}{\partial t}\\
\frac{\partial t}{\partial r} & \frac{\partial t}{\partial t}
\end{matrix}\right]
=\left[\begin{matrix}
\rho r^2 & -\rho r^2 u\\
0 & 1
\end{matrix}\right],\;
\left[\begin{matrix}
\frac{\partial r}{\partial x} & \frac{\partial r}{\partial t}\\
\frac{\partial t}{\partial x} & \frac{\partial t}{\partial t}
\end{matrix}\right]
=\left[\begin{matrix}
(\rho r^2)^{-1} & u\\
0 & 1
\end{matrix}\right].
\end{equation}
In view of (1.13), the system (1.9-1.12) is transformed to:
\begin{numcases}
{} \partial_t\rho+\rho^2\partial_x(r^2u)=0, &$x>0,\;t>0,$\\
\partial_t u+\frac{Ca}{2}r^2\partial_x(\rho^\gamma) =\mu r^2\partial_x\left(\rho\partial_x(r^2u)\right), &$x>0,\;t>0,$\\
\frac{dR}{dt}=u|_{x=0}, &$t>0$\\
(\frac{Ca}{2}\rho^\gamma-\mu\rho r^2\partial_x u)|_{x=0}=\left(\frac{Ca}{2}+\frac{2}{We}\right)R^{-3\gamma_0}-\frac{2}{We}R^{-1}, &$t>0,$\\
r=\left(R(t)^3+3\int_0^x\rho^{-1}(y,t)dy\right)^{\frac{1}{3}}=r(x,0)+\int_0^t u(y,\tau)d\tau, &$x>0,\;t>0,$
\end{numcases} \\
with the initial value \begin{equation}(u,\;\rho,\;R)|_{t=0}=(u_0,\;\rho_0,\;R_0),\end{equation} and compatibly \begin{equation} r_0(x)=\left(R_0^3+3\int_0^x\rho_0^{-1}(y)dy\right)^{\frac{1}{3}}.\end{equation}
\indent The first result is the global existence and the uniqueness of the generalized solution to (1.14-1.19), which is defined as following: 
\begin{defi}
$(u,\;\rho,\;R)$ is said to be a generalized solution to system (1.14-1.19) on $[0, T]$ with initial value $(u_0,\;\rho_0,\;R_0)$, if 
$$u\in C\left([0,T],\;L^2(0,+\infty)\right),\;r^2\partial_x u\in C\left([0,T],\;L^2(0,+\infty)\right),$$
$$\partial_t u\in L^\infty\left([0,T],\;L^2(0,+\infty)\right),\;r^2\partial_t\partial_xu\in L^2\left([0,T],\;L^2(0,+\infty)\right),$$
$$\rho-1\in C\left([0,T],\;L^2(0,+\infty)\right),\;r^2\partial_x(\log\rho)\in C\left([0,T],\;L^2(0,+\infty)\right),$$
$$\partial_t\rho\in L^\infty\left([0,T],\;L^2(0,+\infty)\right),\;r^2\partial_t\partial_x(\log\rho)\in L^\infty\left([0,T],\;L^2(0,+\infty)\right),$$
$$\inf_{(x,t)\in (0,+\infty)\times [0,T]}\rho>0,\; \inf_{t\in [0,T]}R>0,\;\rho\in L^\infty\left([0,T],\;L^\infty(0,+\infty)\right),\;R\in L^\infty[0,T],$$
and (1.14)(1.15) are satisfied in the $L^\infty\left([0,T],\;L^2(0,+\infty)\right)$ sense while (1.16)(1.17) are satisfied in the trace sense.
\end{defi}
Now, we are in the position to state the main results:
\begin{thm}[Global existence and uniqueness]
Suppose that the initial value $(u_0,\;\rho_0,\;R_0)$ satisfies that $$u_0\in L^2(0,+\infty),\; r_0^2\partial_x u_0\in  L^2(0,+\infty),\;r_0^2\partial_x\left(\rho\partial_x(r^2u)\right)\in L^2(0,+\infty),$$ $$\int_0^\infty H(\rho_0)dx<+\infty,\;r_0^2\partial_x(\log\rho_0)\in  L^2(0,+\infty),\;\text{where } H(\rho)=\rho^{\gamma-1}-\gamma+(\gamma-1)\rho^{-1},$$
$$\inf\rho_0>0,\;\sup\rho_0\leq+\infty,\;0<R_0<\infty,\;\text{and } r_0\text{ is given by (1.20).}$$
Then there exists a unique global generalized solution to (1.14-1.19).
\end{thm}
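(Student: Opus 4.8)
The plan is to follow the standard route for one-dimensional compressible Navier--Stokes systems — local existence by iteration, uniform a priori estimates, continuation — with the new ingredients being the free boundary at $x=0$ and the unbounded domain with the growing weight $r^2$. For \emph{local existence} I would iterate: given $(\rho,r,R)$, solve the linear uniformly parabolic equation obtained by freezing the coefficients of (1.15) (its principal part is $\mu\rho r^4\partial_x^2$), with the boundary condition for $\partial_x u$ at $x=0$ read off from (1.17); then recover $\rho$ by integrating the continuity equation, which in mass coordinates is the ODE $\partial_t\log\rho=-\rho\,\partial_x(r^2u)$ along $x=\mathrm{const}$; then update $R$ from (1.16) and $r$ from (1.18). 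Estimating this map in the norms of Definition 1.1 and showing it contracts on a short interval $[0,T_0]$ gives a unique local generalized solution. The care points are the growth of $r^2$ as $x\to\infty$, handled by weighted estimates using $r^3\sim 3x$ for large $x$ while $\rho$ stays near $1$ (and $r$ bounded below by a positive constant globally), and controlling all boundary contributions by trace and interpolation inequalities on the half-line; the positive lower bound on $\rho$ that keeps the coefficients nondegenerate propagates on $[0,T_0]$ directly from the transport form of (1.14).

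The heart of the matter is the \emph{a priori estimates}. (i) \emph{Energy law.} Testing (1.15) with $u$ and using (1.14) to rewrite the pressure work as $\tfrac{d}{dt}\bigl(\tfrac{Ca}{2(\gamma-1)}\int_0^\infty H(\rho)\,dx\bigr)$ plus terms that combine with the boundary contributions from (1.16)--(1.17), one obtains $\tfrac{d}{dt}\mathcal E(t)=-\mu\!\int_0^\infty\rho\,\bigl(\partial_x(r^2u)\bigr)^2\,dx\le 0$, where $\mathcal E$ is the sum of $\tfrac12\|u\|_{L^2}^2$, the relative internal energy $\tfrac{Ca}{2(\gamma-1)}\int_0^\infty H(\rho)\,dx$ (here $H$ is convex, $H(1)=H'(1)=0$, $H(\rho)\simeq(\rho-1)^2$ near $1$, and $H(\rho)\to\infty$ as $\rho\to 0^+$ and as $\rho\to\infty$, so finiteness of $\int H(\rho)$ encodes both $\rho-1\in L^2$-type control and an $L^1$ no-vacuum control), the surface energy $\propto R^2$ and the bubble potential $\propto R^{3-3\gamma_0}$; this controls $\sup_{[0,T]}\|u\|_{L^2}$, $\sup_{[0,T]}\int H(\rho)$, two-sided bounds on $R$, and the dissipation in $L^1_t$. (ii) \emph{Pointwise bounds on $\rho$.} With the effective viscous flux $F:=\mu\rho\,\partial_x(r^2u)-\tfrac{Ca}{2}\rho^\gamma$, equation (1.15) gives $\partial_xF=r^{-2}\partial_tu$, while (1.16)--(1.17) reduce its trace to $F|_{x=0}=2\mu\,\tfrac{d}{dt}\log R-g(R)$ with $g(R):=(\tfrac{Ca}{2}+\tfrac{2}{We})R^{-3\gamma_0}-\tfrac{2}{We}R^{-1}$; combining this with $\mu\,\partial_t\log\rho=-F-\tfrac{Ca}{2}\rho^\gamma$ (which is just (1.14) rewritten) and integrating in $x$ and then $t$ produces a Kazhikhov-type representation formula for $\log\rho(x,t)$, from which an ODE comparison yields $0<c(T)\le\rho(x,t)\le C(T)<\infty$ on $(0,\infty)\times[0,T]$. (iii) \emph{Higher-order bounds.} Differentiating (1.15) in $t$ and testing with $\partial_tu$ (handling boundary terms with (1.16)--(1.17) and the density bounds) controls $\sup_{[0,T]}\|\partial_tu\|_{L^2}$ and $\|r^2\partial_t\partial_xu\|_{L^2_{t,x}}$; reading (1.15) as an ODE in $x$ at fixed $t$ then gives the bound on $r^2\partial_xu$, and the same manipulation applied to $\mu\,\partial_t\partial_x\log\rho+\tfrac{Ca\gamma}{2}\rho^\gamma\,\partial_x\log\rho=-r^{-2}\partial_tu$ — the pressure term now a damping term — gives the bounds on $r^2\partial_x\log\rho$ and its time derivative. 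All constants depend only on the data and on $T$.

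Since these estimates show that none of the quantities appearing in Definition 1.1 — including $\inf\rho$ and $\inf R$ — can degenerate at a finite time, the local solution extends to all $t>0$ by a routine continuation argument; uniqueness is a Gronwall estimate for the difference of two solutions with the same data, using the regularity above and the uniform positivity of $\rho$ and $R$. I expect step (ii) to be the main obstacle: on the unbounded domain there is no domain-size-dependent Poincaré inequality, the weight $r^2$ is unbounded, and the free boundary feeds the nonlinear quantity $g(R)$ into the flux, so the density representation has to be organized so that its upper and lower bounds depend only on $\mathcal E(0)$ and $T$ — the lower bound (absence of vacuum) in particular requiring one to exploit the sign of the term $-\tfrac{Ca}{2}\int_0^t\rho^\gamma\,d\tau$ that appears in it.
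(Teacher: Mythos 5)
Your overall architecture (local existence, basic energy plus two-sided density bounds plus first-order estimates, continuation, Gronwall uniqueness) is the same architecture the paper uses --- but the paper runs it for an auxiliary initial-boundary value problem on the truncated domain $[0,k]$ with the extra condition $u|_{x=k}=0$ (Proposition 2.1, Lemmas 2.2--2.8), and then obtains Theorem 1.2 by cutting off the initial data, using the $k$-independent estimates, and passing to the limit by weak-$*$ and Rellich compactness, verifying (1.14)--(1.18) and the nonlinear boundary condition in the limit (Section 3). Within the estimates, your density bound via the effective viscous flux $F=\mu\rho\,\partial_x(r^2u)-\tfrac{Ca}{2}\rho^\gamma$ and a Kazhikhov-type representation is genuinely different from the paper's mechanism, which is the Bresch--Desjardins entropy (Lemma 2.5) controlling $r^2\partial_x\log\rho$, a measure/mean-value argument on $[0,k]$ producing a good point $x_0(t)$, and an explicit linear ODE for $(\rho|_{x=0}R^2)^{-\gamma}$ coming from the boundary condition (Lemma 2.4); note the BD quantity is needed anyway, since $r^2\partial_x\log\rho$ belongs to the solution class. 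Two points inside your scheme need repair: in the additive representation $\mu\log\rho=\mu\log\rho_0-\tfrac{Ca}{2}\int_0^t\rho^\gamma d\tau+(\text{bounded terms})$ the favorable sign yields the \emph{upper} bound of $\rho$, and the lower bound then follows by feeding that upper bound back in, not the other way around as you state; and the ``bounded terms'' contain integrals like $\int_0^x r^{-2}u\,dy$, whose control needs $\int_0^\infty r^{-4}dx\le C(T)$ --- not free, since $\int_0^\infty r^{-4}dx\le R^{-1}\sup_x\rho$ is circular; the paper gets the analogous bound by a Gronwall argument in time starting from $\int r_0^{-4}dx\le R_0^{-1}\sup\rho_0$ (proof of Lemma 2.6), and you would need the same device.

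The genuine gap is that you work directly on $(0,\infty)$. Every one of your steps --- the energy law, the $x$-integrations behind the flux representation, the damped equation for $r^2\partial_x\log\rho$ tested against $r^4(\log\rho)_x$ --- uses integration by parts up to $x=\infty$, and in the generalized solution class there is no decay of $u$ (only $u\in L^2$ and $r^2u_x\in L^2$ with $r^2\sim(3x)^{2/3}$ unbounded), so the vanishing of boundary contributions at infinity is exactly the ``ambiguity'' the paper identifies as the obstruction and is the reason it proves existence through the bounded-domain approximation; even after the solution is constructed, the paper justifies the basic energy identity for it only by returning to the approximants $\tilde u_k$ and passing to the limit (Lemma 4.1). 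Likewise, your local existence by freezing coefficients and contracting on the half-line, with principal coefficient $\mu\rho r^4$ growing like $x^{4/3}$ and the nonlinear dynamic boundary condition (1.16)--(1.17), is asserted rather than reduced to anything standard; the paper sidesteps it by Galerkin/short-time existence on $[0,k]$, where these issues disappear. So to make your route rigorous you must either supply the weighted half-line local theory together with the decay-at-infinity justifications, or adopt the paper's exhaustion-by-bounded-domains scheme --- the latter being precisely the main idea of the paper's proof that your plan omits.
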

\indent In contrast to free boundary problems for Navier-Stokes equations on bounded domains with vacuums (for example, \cite{LSDS}\cite{GWS1D}\cite{doi:10.1081/PDE-100002385}), the bubble pressure being positive avoids the formation of vacuums. However, the unboundedness of the domain also causes some ambiguity when establishing energy estimates, including the elliptic estimates and that the lack of decay of $u$ in space makes integration by parts ambiguous. To overcome these ambiguity, we borrow the idea from Jiang \cite{GSS} considering a related initial boundary value problem on bounded domains, and constructing approximate solutions to (1.14-1.19) using the solutions to this initial boundary problem on  bounded domains. 
\begin{thm}[Viscous damping]
Suppose that the initial value $(u_0,\;\rho_0,\;R_0)$ is close enough to the equilibrium state in the sense that for some small positive $\delta$
\begin{equation}
\|u_0\|_{L^2}^2+\int_0^\infty H(\rho_0)dx+\|r_0^2\partial_x(\log\rho_0)\|_{L^2}^2+(R_0-1)^2\leq\delta.
\end{equation} 
Then the global generalized solution given by Theorem 1.2 satisfies that 
\begin{equation}
\|r^2\partial_x u\|_{L^2}^2+\left\|\frac{u}{r}\right\|_{L^2}^2+\|r^2\partial_x\rho\|_{L^2}^2+(R-1)^2\leq C(1+t)^{-1},
\end{equation}
where $C$ is a constant depending on the initial data.
\end{thm}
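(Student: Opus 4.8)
I would use a ``bare energy method'': build a coercive Lyapunov functional at the base level, one at the differentiated level, and squeeze out the algebraic rate by a Gagliardo--Nirenberg interpolation combined with a scalar ODE. The smallness $\delta$ from (21) is propagated by a continuity/bootstrap argument along the global solution of Theorem 1.2, so that every nonlinear (cubic) term carries a factor $\sqrt\delta$ and can be absorbed. \emph{Base energy.} Multiplying (1.15) by $u$ and integrating in $dx$, one uses (1.14) to rewrite the pressure work via the identity $\frac{d}{dt}\!\int H(\rho)\,dx=-(\gamma-1)\!\int(\rho^\gamma-1)\partial_x(r^2u)\,dx$, and one uses (1.16)--(1.17) at $x=0$ to assemble a surface potential $\Psi(R)$ with $\Psi'(1)=0$, $\Psi''(1)=-g'(1)>0$ where $g(R):=(\tfrac{Ca}{2}+\tfrac{2}{We})R^{-3\gamma_0}-\tfrac{2}{We}R^{-1}$ (here $\gamma_0>1$, $Ca,We>0$ are used). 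After one integration by parts the viscous term contributes, cleanly, $\mu\!\int\rho(r^2\partial_x u)^2dx+2\mu\!\int u^2/(\rho r^2)\,dx$. Then the Bresch--Desjardins structure: since $\rho\partial_x(r^2u)=-\partial_t\log\rho$, the momentum equation becomes $\partial_t(u+\mu r^2\partial_x\log\rho)=2\mu r u\,\partial_x\log\rho-\tfrac{Ca}{2}r^2\partial_x\rho^\gamma$, and testing against the effective velocity $w:=u+\mu r^2\partial_x\log\rho$ produces the extra damping $c\|r^2\partial_x\log\rho\|_{L^2}^2$, whose pressure work cancels the one from the basic estimate. One obtains a functional $\mathcal E_0\simeq\|u\|_{L^2}^2+\int H(\rho)\,dx+\|r^2\partial_x\log\rho\|_{L^2}^2+(R-1)^2$ with
\[
\tfrac{d}{dt}\mathcal E_0+\mathcal D_0\le 0,\qquad \mathcal D_0\simeq\|r^2\partial_x u\|_{L^2}^2+\|u/r\|_{L^2}^2+\|r^2\partial_x\rho\|_{L^2}^2 .
\]
In particular $\mathcal E_0(t)\le\mathcal E_0(0)\lesssim\delta$ (this closes the bootstrap, $\log\rho$ and $\rho-1$ being interchangeable once $\|\rho-1\|_{L^\infty}$ is small), and $\int_0^\infty\mathcal D_0\,dt\lesssim\delta$. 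Note $\mathcal D_0$ is, up to the $(R-1)^2$ term, exactly the quantity in (22).

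\textbf{Differentiated estimate.} Applying $\partial_t$ to (1.15), testing against $\partial_t u$, and integrating $dx$, the identity $\partial_t\rho^\gamma=-\gamma\rho^{\gamma+1}\partial_x(r^2u)$ lets the pressure term assemble into $\frac{d}{dt}\big(\tfrac{Ca\gamma}{4}\!\int\rho^{\gamma+1}(\partial_x(r^2u))^2dx\big)$ while the viscous term yields the genuinely higher dissipation $\mu\!\int\rho(\partial_x(r^2\partial_t u))^2dx\simeq\|r^2\partial_x\partial_t u\|_{L^2}^2+\|\partial_t u/r\|_{L^2}^2$; testing the $\partial_t$-continuity relation $r^2\partial_t\partial_x\log\rho=-\tfrac1\mu(\partial_t u+\tfrac{Ca}{2}r^2\partial_x\rho^\gamma)$ against $r^2\partial_x\log\rho$ supplies $\|r^2\partial_x\partial_t\log\rho\|_{L^2}^2$. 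All leftover terms are cubic, hence bounded by $\sqrt\delta(\mathcal D_0+\mathcal D_1)$ and absorbed. This produces a first-order functional $\mathcal E_1\simeq\|r^2\partial_x u\|_{L^2}^2+\|u/r\|_{L^2}^2+\|r^2\partial_x\rho\|_{L^2}^2$ (equivalently $\|\partial_t u\|_{L^2}^2+\dots$, via the equation) with $\tfrac{d}{dt}\mathcal E_1+\mathcal D_1\le 0$ modulo the absorbed terms and the $x=0$ boundary terms, where $\mathcal D_1\supseteq\|\partial_t u\|_{L^2}^2+\|r^2\partial_x\partial_t u\|_{L^2}^2+\|r^2\partial_x\partial_t\log\rho\|_{L^2}^2$.

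\textbf{Extracting the rate.} The algebraic rate comes from the interpolation $\mathcal E_1\lesssim\mathcal E_0^{1/2}\mathcal D_1^{1/2}$: for the velocity part this is the Gagliardo--Nirenberg inequality $\|r^2\partial_x u\|_{L^2}^2\lesssim\|u\|_{L^2}\,\|r^2\partial_x^2u\|_{L^2}$ on the half-line (with the geometric weight $r(x)\sim x^{1/3}$), with $\|r^2\partial_x^2u\|_{L^2}\lesssim\|\partial_t u\|_{L^2}+\|r^2\partial_x\rho\|_{L^2}$ from the momentum equation; for the density part one has both $\|r^2\partial_x\rho\|_{L^2}^2\lesssim\mathcal D_1$ (via the relation above) and $\lesssim\mathcal E_0\lesssim\delta$, whence $\|r^2\partial_x\rho\|_{L^2}^2\lesssim\sqrt\delta\,\mathcal D_1^{1/2}$. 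Since $\mathcal E_0\le C\delta$, this gives $\mathcal D_1\gtrsim \mathcal E_1^2/\mathcal E_0\gtrsim c\,\mathcal E_1^2/\delta$, so the differentiated estimate yields $\tfrac{d}{dt}\mathcal E_1+(c/\delta)\mathcal E_1^2\le 0$; integrating, $\mathcal E_1(t)\le\big(\mathcal E_1(0)^{-1}+ct/\delta\big)^{-1}\le C(1+t)^{-1}$ with $C=C(\mathcal E_1(0),\delta)$. This is the claimed decay of the first three terms of (22).

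\textbf{The radius, and the main obstacle.} Finally, from the dynamic condition (1.17), $g(R)=\tfrac{Ca}{2}\rho^\gamma|_{x=0}-\mu\rho r^2\partial_x u|_{x=0}$ with $g'(1)<0$, so for $R$ near $1$ (ensured by the base estimate) $|R-1|\lesssim|\rho^\gamma|_{x=0}-1|+|\rho r^2\partial_x u|_{x=0}|$, and these traces are controlled through $|f(0)|^2\lesssim\|f\|_{L^2}\|f'\|_{L^2}$ by $\mathcal E_1$ and $\mathcal D_1$; hence $(R-1)^2$ inherits the $(1+t)^{-1}$ decay, finishing (22). I expect the genuine difficulty to be precisely the $x=0$ boundary contributions generated in the base and differentiated estimates: they couple $u|_{x=0}=\dot R$, $\rho|_{x=0}$ and $\partial_x u|_{x=0}$ through the nonlinear condition (1.17) and through the ODE for $R$, and handling them rigorously forces one to exploit the trace/elliptic regularity built into the generalized-solution class of Definition 1.1 (notably $\partial_t u\in L^\infty L^2$, $r^2\partial_t\partial_x u\in L^2L^2$) together with the strict monotonicity $g'(1)<0$. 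The unboundedness of the domain (no Poincaré inequality, weight $r(x)\sim x^{1/3}$ growing at infinity) is what rules out exponential decay, forces the interpolation/ODE route, and is the reason only the $\dot H^1$ quantities -- not $\|u\|_{L^2}^2$ nor $\int H(\rho)\,dx$, which merely stay small -- can be shown to decay.
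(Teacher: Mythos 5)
Your base-level scheme (basic energy plus Bresch--Desjardins entropy, smallness of $E_0+E_1$, $\rho\approx1$, and the trace argument for $(R-1)^2$ at the end) matches the paper. The gap is in the rate-extraction step. You claim a first-order dissipation $\mathcal D_1\supseteq\|\partial_t u\|_{L^2}^2+\|r^2\partial_x\partial_t u\|_{L^2}^2+\|r^2\partial_x\partial_t\log\rho\|_{L^2}^2$ and the coercive interpolation $\mathcal E_1\lesssim\mathcal E_0^{1/2}\mathcal D_1^{1/2}$, leading to $\frac{d}{dt}\mathcal E_1+(c/\delta)\mathcal E_1^2\le0$. But testing $\partial_t$ of (1.15) against $u_t$ produces only $\mu\int\rho(r^2u_{xt})^2dx+2\mu\int\rho^{-1}u_t^2/r^2\,dx$, and the first-order BD step produces $\int\rho^\gamma r^4(\log\rho)_{xt}^2dx$; on the unbounded domain, where $r\sim x^{1/3}\to\infty$ and there is no Poincar\'e inequality, $\|u_t/r\|_{L^2}$ does not control $\|u_t\|_{L^2}$, so $\|\partial_tu\|_{L^2}^2$ is \emph{not} part of the dissipation. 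Consequently your bound $\|r^2\partial_x\rho\|_{L^2}^2\lesssim\mathcal D_1$ ``via the relation'' $\frac{Ca}{2}(\rho^\gamma)_xr^2=-(u_t+\mu r^2(\log\rho)_{xt})$ is circular: that identity shows $\|r^2\rho_x\|_{L^2}^2$ is comparable to the \emph{energy} $E_3$, not to the dissipation, and the same defect propagates into your velocity interpolation, where the elliptic bound $\|r^2\partial_x^2u\|_{L^2}\lesssim\|\partial_tu\|_{L^2}+\|r^2\partial_x\rho\|_{L^2}$ again involves only $\mathcal E_1$-level (not $\mathcal D_1$-level) quantities. So the key inequality $\mathcal D_1\gtrsim\mathcal E_1^2/\delta$ is unsubstantiated, and the nonlinear ODE closing your argument is not available. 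A second, related overclaim is that ``all leftover terms are cubic, hence bounded by $\sqrt\delta(\mathcal D_0+\mathcal D_1)$'': terms such as $\int(\rho^\gamma)_xr\,u\,u_t\,dx$, $\int\rho_t(r^2u)_x(r^2u_t)_xdx$ and the $x=0$ fluxes can only be bounded by (time-integrable coefficient)$\times(E_2+E_3)$ plus $\epsilon\times$dissipation, not absorbed with a uniform $\sqrt\delta$ factor; and the boundary flux $\mu(\rho r^2u_x)|_{x=0}R^2\dot R$ in the BD estimate needs the paper's explicit ODE for $(\rho|_{x=0}R^2)^{-\gamma}$ and the exponentially decaying kernel $S(t)$, which your proposal does not supply.

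The paper's route avoids precisely this failure: it establishes (4.33), i.e.\ $\frac{d}{dt}(AE_2+E_3)+\mathrm{dissipation}\le C\alpha(t)(AE_2+E_3)$ with $\alpha\in L^1_t$ coming from the zeroth-order dissipations, then proves separately (Corollary 4.7) that $\int_0^\infty(E_2+E_3)\,dt<\infty$ --- the point being that, through the momentum equation, the main part of $E_3$ equals $(\tfrac{Ca}{2})^2\|r^2(\rho^\gamma)_x\|_{L^2}^2$, whose time integral is exactly the BD dissipation of Lemma 4.3(ii), while $\int\!\!\int(\rho^{\frac{\gamma-1}{2}})_t^2$ is controlled by the basic energy dissipation --- and finally applies the elementary Lemma 4.8: $E'\le\alpha E$, $\int E\,dt<\infty$, $\int\alpha\,dt<\infty$ imply $E\lesssim(1+t)^{-1}$. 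If you want to salvage your approach, you would either have to prove a genuine coercivity of the first-order dissipation over $E_2+E_3$ (which the unboundedness of the domain obstructs), or replace your quadratic ODE by this ``time-integrability of the energy'' mechanism, which is the actual content of the paper's proof.
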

\indent The proof of Theorem 1.3 requires a more careful estimate to bound the density $\rho$ from above and below uniformly in time by using the Bresch-Desjardins entropy estimate \cite{2DVSW} and making full use of the dissipations to cancel bad boundary terms. Note that compared with assumptions of Theorem 1.2 on regularities of the initial data, the smallness assumption (1.21) only applies on the low regularities $u_0$, $H(\rho_0)$, $R_0$, and $r_0^2\partial_x(\log\rho_0)$, which implies that a large gradient of the velocity in the initial data will not inhibit the resulted decay. The novelty is that system (1.14-1.19) involves a nonlinear boundary condition (1.17), and we avoid linearizing system (1.14-1.19) and work totally in the nonlinear scheme.\\
\indent In the following several sections, $Ca,\;We,\;\mu$ denote corresponding fixed constants. $c,\; C$ are used to denote constants only depending on the initial value and the above fixed constants. $c(T),\; C(T)$ are used to denote constants depending on the initial value, the above fixed constants and the time span $[0,T]$. For simplicity, sometimes $\partial_\alpha f$ is written as $f_\alpha$ for $\alpha=x,\;t,\;f=u,\;\rho$, etc. It is necessary to note that $c, c(T), C, C(T)$ are required independent on the size of the bounded domains.\\
\indent The plan of the paper is as follows. In the next section, we  state the related initial boundary value problem on bounded domains and prove the existence of global solutions to this related problem in a standard procedure: the short time existence, a-prior estimates and the continuity argument. In the first part of Section 3 the approximated solutions are constructed from the solutions on bounded domains and weak compactness is employed to obtain the exact solution to (1.14-1.19). Then the uniqueness is proved in the second part of Section 3. Finally, the uniform in time estimates are given in Section 4, which is then applied to obtain the viscous damping with the help of the differential inequality in Lemma 4.8.

\paragraph{Acknowledgements} L. Zhao is supported by NSFC Grant of China No. 12271497  and the National Key Research and Development Program of China  No. 2020YFA0713100.
\section{The bubble-liquid system on bounded domain}
 In this section, we temporarily abbreviate $L^\infty(0,k)$ as $L^\infty$, $L^2(0,k)$ as $L^2$, and correspondingly $\|\cdot\|_{L^\infty(0,k)}$ as $\|\cdot\|_{L^\infty}$, $\|\cdot\|_{L^2(0,k)}$ as $\|\cdot\|_{L^2}$. Now consider the bubble-liquid system on bounded domain $[0,k]$ for $k>1$, namely
\begin{numcases}
{} \partial_t\rho+\rho^2\partial_x(r^2u)=0, &$k>x>0,\;t>0,$\\
\partial_t u+\frac{Ca}{2}r^2\partial_x(\rho^\gamma) =\mu r^2\partial_x\left(\rho\partial_x(r^2u)\right), &$k>x>0,\;t>0,$\\
\frac{dR}{dt}=u|_{x=0},\; u|_{x=k}=0, &$t>0,$\\
(\frac{Ca}{2}\rho^\gamma-\mu\rho r^2\partial_x u)|_{x=0}=\left(\frac{Ca}{2}+\frac{2}{We}\right)R^{-3\gamma_0}-\frac{2}{We}R^{-1}, &$t>0,$\\
r=\left(R(t)^3+3\int_0^x\rho^{-1}(y,t)dy\right)^{\frac{1}{3}}=r(x,0)+\int_0^t u(y,\tau)d\tau, &$k>x>0,\;t>0,$\\
(u,\;\rho,\;R)|_{t=0}=(u_0,\;\rho_0,\;R_0), &$k>x>0,$\\
r_0(x)=\left(R_0^3+3\int_0^x\rho_0^{-1}(y)dy\right)^{\frac{1}{3}}, &$k>x>0.$
\end{numcases} 
\begin{prop}[Global existence on bounded domains]
Suppose the initial data $(u_0,\;\rho_0,\;R_0)$ satisfies that $$u_0\in L^2(0,k),\; r_0^2\partial_x u_0\in  L^2(0,k),\;r_0^2\partial_x\left(\rho_0\partial_x(r_0^2u_0)\right)\in L^2(0,k),$$ $$\int_0^k H(\rho_0)dx<+\infty,\;r_0^2\partial_x(\log\rho_0)\in  L^2(0,k),\;\inf_{x\in [0,+\infty)}\rho_0>0,\;\sup_{x\in [0,+\infty)}\rho_0\leq+\infty,\;0<R_0<\infty.$$ 
Then there exists a unique global generalized solution to (2.1-2.7). 
\end{prop}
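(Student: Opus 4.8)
\emph{Strategy and local existence.} The plan is to follow the classical three-step route for one-dimensional compressible Navier--Stokes equations --- short-time existence, a priori estimates whose constants are independent of the domain size $k$, and a continuation argument --- adapted to the spherically symmetric free boundary and to the nonlinear boundary condition (2.4), in the spirit of Jiang \cite{GSS}. Since $\partial_t r=u$, the radius $r[u]$ and the bubble radius $R[u]=r[u](0,\cdot)=R_0+\int_0^t u(0,\cdot)$ are recovered from $u$ by time integration, after which $\rho[u]$ is obtained by solving the ODE (2.1), $\partial_t\rho=-\rho^2\partial_x(r^2u)$, fibrewise in $x$. Inserting these into (2.2) turns the system into a quasilinear parabolic equation for $u$ on $(0,k)$, carrying the Robin-type condition $\mu\rho r^2\partial_x u|_{x=0}=\tfrac{Ca}{2}\rho^\gamma|_{x=0}-g(R)$ with $g(R):=(\tfrac{Ca}{2}+\tfrac{2}{We})R^{-3\gamma_0}-\tfrac{2}{We}R^{-1}$ at $x=0$, and $u|_{x=k}=0$. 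Because $\inf\rho_0>0$ and $R_0>0$, the coefficients stay bounded above and below on a short interval, so the principal part is uniformly parabolic there; a linearization and Banach fixed point (or a Galerkin scheme) together with parabolic regularity then produce a unique solution on some $[0,T_0]$ with the regularity of Definition 1.1 restricted to $(0,k)$, the compatibility needed for $\partial_t u\in L^2$ at $t=0$ being provided by the hypothesis $r_0^2\partial_x(\rho_0\partial_x(r_0^2u_0))\in L^2$.

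\emph{Basic energy and pointwise density bounds.} Multiplying (2.2) by $u$, integrating over $(0,k)$ and integrating the viscous term by parts, the endpoint $x=k$ drops out ($u=0$) while at $x=0$ one substitutes (2.4) and uses $\dot R=u|_{x=0}$; since $\partial_x(r^2u)=\partial_t(\rho^{-1})$ by (2.1), the pressure and boundary contributions assemble into exact time derivatives, yielding
\begin{equation*}
\frac{d}{dt}\Big[\tfrac12\|u\|_{L^2}^2+\tfrac{Ca}{2(\gamma-1)}\int_0^k H(\rho)\,dx+\mathcal B(R)\Big]+2\mu R\,u(0)^2+\mu\int_0^k\rho\,|\partial_x(r^2u)|^2\,dx=0,
\end{equation*}
where $\mathcal B(R)=\tfrac{Ca}{6}R^3-\tfrac{1}{3-3\gamma_0}(\tfrac{Ca}{2}+\tfrac{2}{We})R^{3-3\gamma_0}+\tfrac{1}{We}R^2$; after normalizing $\mathcal B(1)=0$ this is nonnegative and coercive on $(0,\infty)$ --- it blows up as $R\to0^+$ (here $\gamma_0>1$ is used) and as $R\to\infty$ --- with $\mathcal B'(1)=0<\mathcal B''(1)$. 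The right-hand side is dissipative and the energy at $t=0$ is finite and bounded independently of $k$ by the hypotheses, so we obtain uniform bounds on $\|u\|_{L^2}^2$, $\int_0^kH(\rho)\,dx$ and $(R-1)^2$, on $R$ from above and away from $0$, and on $\int_0^t\big(\int_0^k\rho|\partial_x(r^2u)|^2\,dx+R\,u(0)^2\big)d\tau$. Next, introducing the effective viscous flux $\tilde\sigma:=\mu\rho\,\partial_x(r^2u)-\tfrac{Ca}{2}\rho^\gamma$, for which $r^{-2}\partial_t u=\partial_x\tilde\sigma$ and $\tilde\sigma|_{x=0}$ is known from (2.4) and $\dot R$, and combining with $\mu\,\partial_t(\log\rho)=-\mu\rho\,\partial_x(r^2u)$ from (2.1), one derives a Kazhikhov-type representation for $\rho$ along each characteristic $x=\mathrm{const}$, which, together with the energy bounds and the bounds on $R$, yields $c^{-1}\le\rho\le C$ on $(0,k)\times[0,T]$.

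\emph{Higher-order estimates, continuation and uniqueness.} Differentiating (2.2) in $t$ and testing against $\partial_t u$ (equivalently, running weighted energy estimates for $r^2\partial_x u$ and $r^2\partial_x(\log\rho)$), the boundary term at $x=0$ is handled by differentiating (2.4) in $t$ --- which expresses the flux derivative through $\rho^{\gamma-1}\partial_t\rho|_{x=0}$ and $g'(R)\dot R$ --- and by trace inequalities at the left endpoint with constants independent of $k$. This closes the $L^\infty_tL^2_x$ bounds for $\partial_t u$, $\partial_t\rho$ and $r^2\partial_t\partial_x(\log\rho)$, the $L^2_tL^2_x$ bound for $r^2\partial_t\partial_x u$, and hence, via the equations, the $C_tL^2_x$ bounds for $r^2\partial_x u$ and $r^2\partial_x(\log\rho)$, all with constants of the form $C(T)$ independent of $k$. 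With these bounds in hand the local solution can be restarted at $T_0$ with data still satisfying the hypotheses, and since none of the above bounds degenerates in finite time, the solution extends to $[0,\infty)$, giving a global generalized solution. Uniqueness on each $[0,T]$ follows from an energy estimate for the difference of two solutions: once $\rho$ and $R$ are bounded above and below, the nonlinearities in (2.4) and in the coefficients are Lipschitz, and Gronwall's inequality forces the difference to vanish.

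\emph{Main obstacle.} The delicate part is the a priori estimates: the $r^2$-weights, the fact that ``$x=0$'' is the moving interface $r=R(t)$, and the nonlinearities $\rho^\gamma$, $R^{-3\gamma_0}$, $R^{-1}$ in (2.4) all enter every integration by parts through the boundary, so these terms must be carefully organized into dissipation or exact time derivatives (the favorable sign of $2\mu R\,u(0)^2$ above is a sample of this), and --- essential for the use of this proposition in the next section --- one must verify that no constant secretly depends on $k$, which forces the Kazhikhov-type density bounds and all the trace inequalities to be set up in a scale-invariant way.
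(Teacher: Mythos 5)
Your skeleton (short-time existence by Galerkin/fixed point, a priori bounds with constants independent of $k$, continuation, and uniqueness via a Gronwall estimate for differences) is exactly the paper's, and your basic energy identity is the paper's (2.8) in an equivalent form, since $2\mu R\,u(0)^2+\mu\int_0^k\rho\,(r^2u)_x^2\,dx=\mu\int_0^k\rho\,(r^2u_x)^2\,dx+2\mu\int_0^k\rho^{-1}u^2r^{-2}\,dx$. Where you genuinely diverge is the pointwise density bound: you use a Kazhikhov-type representation through the effective viscous flux $\tilde\sigma=\mu\rho(r^2u)_x-\frac{Ca}{2}\rho^\gamma$, whose boundary value $\tilde\sigma|_{x=0}=-\left[\left(\frac{Ca}{2}+\frac{2}{We}\right)R^{-3\gamma_0}-\frac{2}{We}R^{-1}\right]+2\mu R^{-1}\frac{dR}{dt}$ depends only on $R$ and $\dot R$, so the boundary density never enters and the paper's Lemma 2.4 (the ODE for $(\rho|_{x=0}R^2)^{-\gamma}$) is bypassed; this is closer to Jiang's original method in \cite{GSS}. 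The route is workable: writing $\int_0^x r^{-2}u_t\,dy=\partial_t\int_0^x r^{-2}u\,dy+2\int_0^x r^{-3}u^2\,dy$ and using $\int_0^t\|u^2r\|_{L^\infty}d\tau\le\mu^{-1}E_0$ (Lemma 2.3) together with the $k$-uniform bound $\int_0^k r^{-4}dx\le C(T)$ (the Gronwall computation (2.17)) gives $\rho\le C(T)$ from the sign of $\rho^\gamma$, and then $\rho\ge c(T)$ by bootstrapping. The paper instead gets the bounds from the Bresch--Desjardins entropy (Lemma 2.5, which itself needs Lemma 2.4) plus a mean-value argument (Lemma 2.6); what BD buys is that it simultaneously delivers the $L^\infty_tL^2_x$ control of $r^2\partial_x(\log\rho)$, which is part of Definition 1.1 and is the structure reused quantitatively in Section 4, whereas your argument yields only $L^\infty$ bounds on $\rho$ and must recover the gradient bounds separately.

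That recovery is the one place your sketch is too thin: testing the $t$-differentiated momentum equation against $u_t$ alone (the paper's Lemma 2.7) controls $u_t$, $\rho_t$ and the dissipation $r^2u_{xt}$, but not $r^2\partial_t\partial_x(\log\rho)$ in $L^\infty_tL^2_x$, and hence not $r^2\partial_x(\log\rho)$ either. You need in addition either the first-order BD estimate (the paper's Lemma 2.8, i.e.\ testing against $u_t+\mu r^2(\log\rho)_{xt}$) or, consistent with your ``weighted energy estimate for $r^2\partial_x(\log\rho)$'' parenthesis, a Gronwall argument for $V:=r^2(\log\rho)_x$ based on $\mu\partial_tV=-u_t-\frac{Ca\,\gamma}{2}\rho^\gamma V+2\mu\frac{u}{r}V$ (the damping term has a favorable sign), after which $r^2\partial_xu$, $r^2\partial_x(\log\rho)$ and $r^2\partial_t\partial_x(\log\rho)$ follow from the equations exactly as in the paper's closing paragraph of Section 2. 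With that supplement, and with the nonlinear and boundary terms in the first-order estimates handled via (2.24)-type bounds as in the paper, your outline completes.
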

\indent The proof of Proposition 2.1 includes the short time existence of solutions, a-priori estimates, and a standard continuity argument. The proof of the uniqueness is omitted here since it is the same as the uniqueness in the unbounded case, whose proof is given in Section 3. The following several lemmas in this section are devoted to establish the a-priori estimates. $(u,\;\rho,\;R)$ is assumed to be any generalized solution of (2.1-2.7) on $[0,T]$. We begin with the following basic energy identity.
\begin{lem}[Basic energy]
Introduce the notations
$$P(R)=\frac{1}{3\gamma_0-3}\left(\frac{Ca}{2}+\frac{2}{We}\right)\left(R^{-3\gamma_0+3}-1\right)+\frac{1}{We}\left(R^2-1\right)+\frac{Ca}{6}\left(R^3-1\right),$$
and 
$$E_0:=\frac{1}{2}\int_0^k u_0^2dx+\frac{Ca}{2}\frac{1}{\gamma-1}\int_0^k H(\rho_0)dx+P(R_0).$$
Then for any $t\in [0,T]$, 
\begin{equation}
\frac{1}{2}\int_0^ku^2dx+\frac{Ca}{2}\frac{1}{\gamma-1}\int_0^kH(\rho)dx+P(R)+\mu\int_0^t\int_0^k\rho(r^2u_x)^2dxd\tau
+2\mu\int_0^t\int_0^k\rho^{-1}\frac{u^2}{r^2}dxd\tau=E_0.
\end{equation}
\begin{proof}
Multiply (2.2) by $u$ to deduce that 
\begin{equation}
\frac{1}{2}\partial_t(u^2)+\frac{Ca}{2}((\rho^\gamma-1) r^2u)_x-\frac{Ca}{2}(\rho^\gamma-1)(r^2u)_x-\mu\left(\rho(r^2u)_xr^2u\right)_x+\mu\rho(r^2u)_x^2=0.
\end{equation}
(2.1) yields that $(\rho^\gamma-1)(r^2u)_x=-\frac{1}{\gamma-1}\partial_t H(\rho)$. From (2.5), it holds that $$\rho(r^2u)_x=\rho r^2u_x+2r^{-1}u,$$ $$(\rho(r^2)_xr^2u^2)_x=(2ru^2)_x=4\frac{u}{r}(r^2u_x)+2\rho^{-1}\frac{u^2}{r^2}.$$ Hence the cross term in $\rho(r^2u)_x^2$ is cancelled by the above boundary term. Then using boundary conditions (2.3)(2.4), the proof is complete by integrating (2.9) on $[0,k]\times[0,T]$ .
\end{proof}
\end{lem}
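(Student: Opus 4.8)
The plan is the classical energy argument: test the momentum equation (2.2) against $u$, integrate over $x\in[0,k]$, and carefully track the boundary contributions at $x=0$ and $x=k$, using the geometric weight relations forced by (2.5) to turn the stray terms into exact time derivatives of $P(R)$ and of $\int H(\rho)$. First, multiplying (2.2) by $u$ and writing $u\partial_tu=\tfrac12\partial_t(u^2)$, I would treat the pressure term by replacing $\rho^\gamma$ with $\rho^\gamma-1$ and integrating by parts in $x$: $\tfrac{Ca}{2}r^2u\,\partial_x(\rho^\gamma)=\tfrac{Ca}{2}\partial_x\!\big((\rho^\gamma-1)r^2u\big)-\tfrac{Ca}{2}(\rho^\gamma-1)\partial_x(r^2u)$. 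The continuity equation (2.1) gives $\partial_x(r^2u)=-\rho^{-2}\partial_t\rho$, while the chain rule for $H(\rho)=\rho^{\gamma-1}-\gamma+(\gamma-1)\rho^{-1}$ gives $H'(\rho)=(\gamma-1)\rho^{-2}(\rho^\gamma-1)$; hence $-\tfrac{Ca}{2}(\rho^\gamma-1)\partial_x(r^2u)=\tfrac{Ca}{2(\gamma-1)}\partial_tH(\rho)$, an exact time derivative. For the viscous term I would integrate by parts once, $\mu r^2u\,\partial_x\!\big(\rho\,\partial_x(r^2u)\big)=\mu\partial_x\!\big(\rho\,\partial_x(r^2u)\,r^2u\big)-\mu\rho\big(\partial_x(r^2u)\big)^2$.

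The crucial step is to split $\mu\rho(\partial_x(r^2u))^2$ into the two genuine dissipation densities appearing in (2.8) plus a harmless divergence. Since (2.5) yields $\partial_xr=(\rho r^2)^{-1}$, one has $\rho\,\partial_x(r^2u)=\rho r^2u_x+2r^{-1}u$, so $\rho(\partial_x(r^2u))^2=\rho(r^2u_x)^2+4ruu_x+4\rho^{-1}r^{-2}u^2$; and because $\partial_x(ru^2)=\rho^{-1}r^{-2}u^2+2ruu_x$, the cross term recombines as $4ruu_x+4\rho^{-1}r^{-2}u^2=2\partial_x(ru^2)+2\rho^{-1}r^{-2}u^2$. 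Thus $\mu\rho(\partial_x(r^2u))^2=\mu\rho(r^2u_x)^2+2\mu\rho^{-1}r^{-2}u^2+2\mu\partial_x(ru^2)$, which is exactly the good dissipation of (2.8) together with one more boundary term $2\mu[ru^2]_0^k$.

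Finally I would integrate the resulting pointwise identity over $[0,k]$. Every boundary term carries a factor of $u$, so all contributions at $x=k$ vanish by $u|_{x=k}=0$. At $x=0$ one has $r=R$ and $u=\tfrac{dR}{dt}$, and the term $2\mu R(\tfrac{dR}{dt})^2$ coming from $2\mu[ru^2]_0^k$ cancels precisely the $2r^{-1}u$ piece of the viscous flux $\mu\rho\,\partial_x(r^2u)\,r^2u$; the surviving $x=0$ contribution is then $R^2\tfrac{dR}{dt}\big[\mu\rho r^2u_x-\tfrac{Ca}{2}(\rho^\gamma-1)\big]_{x=0}$. Inserting the dynamic boundary condition (2.4) rewrites the bracket as $\tfrac{Ca}{2}-\big(\tfrac{Ca}{2}+\tfrac{2}{We}\big)R^{-3\gamma_0}+\tfrac{2}{We}R^{-1}$, so the whole thing equals $\tfrac{dR}{dt}\big[\tfrac{Ca}{2}R^2-\big(\tfrac{Ca}{2}+\tfrac{2}{We}\big)R^{2-3\gamma_0}+\tfrac{2}{We}R\big]=P'(R)\tfrac{dR}{dt}=\tfrac{d}{dt}P(R)$ — and antidifferentiating this in $R$ is exactly what fixes the normalization $\tfrac{1}{3\gamma_0-3}$ and the additive constants in the definition of $P$. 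Collecting terms gives $\tfrac{d}{dt}\!\big[\tfrac12\!\int u^2+\tfrac{Ca}{2(\gamma-1)}\!\int H(\rho)+P(R)\big]+\mu\!\int\rho(r^2u_x)^2+2\mu\!\int\rho^{-1}r^{-2}u^2=0$, and integrating in $t$ over $[0,t]$ yields (2.8). I expect the only real obstacle to be this boundary bookkeeping at $x=0$: one must perform the cross-term-to-divergence rewrite and substitute (2.4) in the correct order so that the several $\mu(\tfrac{dR}{dt})^2$ contributions cancel and the remainder is precisely $\tfrac{d}{dt}P(R)$; the interior computations and the time integration are routine given the regularity granted in Definition 1.1.
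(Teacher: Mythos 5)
Your proposal is correct and follows essentially the same route as the paper: multiply (2.2) by $u$, convert $-\frac{Ca}{2}(\rho^\gamma-1)(r^2u)_x$ into $\frac{Ca}{2(\gamma-1)}\partial_tH(\rho)$ via (2.1), split $\mu\rho((r^2u)_x)^2$ into $\mu\rho(r^2u_x)^2+2\mu\rho^{-1}u^2/r^2+2\mu(ru^2)_x$ using $\partial_xr=(\rho r^2)^{-1}$, and let the boundary conditions (2.3)(2.4) turn the surviving $x=0$ flux into $\frac{d}{dt}P(R)$ after the $2\mu R(\frac{dR}{dt})^2$ cancellation. Your bookkeeping of the boundary terms and the identification of the bracket with $P'(R)$ match the paper's (more tersely stated) cancellation argument exactly.
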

The most important part of the a-prior estimates is the control of both lower and upper bounds of $\rho$. This control is established through the Bresch-Desjardins entropy estimates stated in Lemma 2.5, which requires the control of $\|r^{-1}u\|_{L^\infty}$ and a boundary term involving $\rho|_{x=0}$. To this end, we first state the following two lemmas. In fact, using the dissipation terms in the basic energy identity and the radial property, a better $L^\infty$ control of $u$ can be proved:
\begin{lem}[$L^\infty$ control of $u$]
For any $t\in [0,T]$, $\int_0^t\|ur^{\frac{1}{2}}\|_{L^\infty}^2d\tau\leq\mu^{-1}E_0$.
\end{lem}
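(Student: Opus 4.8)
The plan is to integrate inward from the outer boundary $x=k$, where $u$ vanishes, and to observe that the integrand produced by differentiating $u^2 r$ is precisely what the two dissipation terms in the basic energy identity (2.8) control.

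First, since $u|_{x=k}=0$, for every fixed $x\in[0,k]$ and $t\in[0,T]$,
$$u(x,t)^2 r(x,t)=-\int_x^k\partial_y\big(u^2r\big)(y,t)\,dy,$$
so that, taking the supremum over $x$ and using $u^2r\ge 0$,
$$\big\|ur^{1/2}\big\|_{L^\infty}^2=\sup_{x\in[0,k]}u(x,t)^2r(x,t)\le\int_0^k\big|\partial_y(u^2r)\big|\,dy.$$
Expanding the derivative by the product rule and substituting $\partial_x r=(\rho r^2)^{-1}$ from (1.13),
$$\partial_y(u^2r)=2uu_yr+u^2r_y=2\,\frac{u}{r}\,(r^2u_y)+\rho^{-1}\frac{u^2}{r^2}.$$

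Next I would absorb the cross term into the two dissipative densities. Applying $2|ab|\le a^2+b^2$ with $a=\rho^{1/2}(r^2u_y)$ and $b=\rho^{-1/2}u/r$ gives the pointwise bound
$$\Big|\,2\,\frac{u}{r}\,(r^2u_y)\,\Big|\le\rho\,(r^2u_y)^2+\rho^{-1}\frac{u^2}{r^2},$$
hence
$$\big\|ur^{1/2}\big\|_{L^\infty}^2\le\int_0^k\rho\,(r^2u_y)^2\,dy+2\int_0^k\rho^{-1}\frac{u^2}{r^2}\,dy.$$
The weights $1$ and $2$ here are exactly those of the dissipation integrals $\mu\int\rho(r^2u_x)^2$ and $2\mu\int\rho^{-1}u^2/r^2$ in (2.8); this matching is the whole point, and it explains why the radial factor $r^{1/2}$ (rather than a plain $L^\infty$ bound on $u$) is the natural gain here — the density $\rho^{-1}u^2/r^2$ is exactly the one coming from the extra $2r^{-1}u$ in $\rho(r^2u)_x$. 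Finally, integrating the last inequality over $[0,t]$ makes the right-hand side equal to $\mu^{-1}$ times the total dissipation, which by (2.8), after discarding the nonnegative terms $\tfrac12\int u^2$, $\tfrac{Ca}{2(\gamma-1)}\int H(\rho)$ and $P(R)\ge 0$, is at most $\mu^{-1}E_0$; this yields $\int_0^t\|ur^{1/2}\|_{L^\infty}^2\,d\tau\le\mu^{-1}E_0$.

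The only step that needs genuine care is the first one at the regularity of a generalized solution: one must know that $u^2r$ is absolutely continuous in $x$ with weak derivative $2uu_yr+u^2r_y$ and that the trace $u|_{x=k}=0$ is attained, which follows from $u,\,r^2u_x\in C([0,T];L^2)$ together with $r\ge R>0$ keeping all the weights finite. Granting this, the remaining steps are only Cauchy--Schwarz/AM--GM calibrated to the form of the dissipation, so I do not anticipate any real obstacle.
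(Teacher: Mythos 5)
Your proposal is correct and is essentially the paper's own argument: both write $\partial_x(u^2r)=2\frac{u}{r}(r^2u_x)+\rho^{-1}\frac{u^2}{r^2}$ (using $\partial_x r=(\rho r^2)^{-1}$), absorb the cross term by AM--GM into $\rho(r^2u_x)^2+\rho^{-1}\frac{u^2}{r^2}$, and then bound the resulting integral by $\mu^{-1}$ times the dissipation in the basic energy identity (2.8). The only difference is that you make explicit the use of the boundary condition $u|_{x=k}=0$ (and the regularity needed for the fundamental theorem of calculus), which the paper leaves implicit in passing from $\|ur^{1/2}\|_{L^\infty}^2$ to $\int_0^k|\partial_x(u^2r)|\,dx$.
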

\begin{proof}
A direct computation shows that
$\partial_x(u^2r)=\rho^{-1}\frac{u^2}{r^2}+2\left(\rho^{-\frac{1}{2}}\frac{u}{r}\right)\left(\rho^{\frac{1}{2}} r^2u_x\right)$, and therefore $|\partial_x(u^2r)|\leq 2\rho^{-1}\frac{u^2}{r^2}+\rho (r^2u_x)^2$. Hence 
$$
\int_0^t\|ur^{\frac{1}{2}}\|_{L^\infty}^2d\tau\leq\int_0^t\int_0^k|\partial_x(u^2r)|dxd\tau\leq\int_0^t\int_0^k\rho(r^2u_x)^2dxd\tau+
2\int_0^t\int_0^k\rho^{-1}\frac{u^2}{r^2}dxd\tau\leq\mu^{-1}E_0.
$$
\end{proof}
For the control of $\rho|_{x=0}$, we have the following estimate. 
\begin{lem}[Control of $\rho|_{x=0}$]
There exists $0<c(T)<C(T)$, such that $c(T)\leq\rho|_{x=0}\leq C(T)$, for any $t\in [0,T]$.
\end{lem}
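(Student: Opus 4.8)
The plan is to convert the dynamic boundary condition (2.4) into a scalar ordinary differential equation for the quantity $\log(\rho|_{x=0})+2\log R$, whose nonlinearity then supplies the damping that controls $\rho|_{x=0}$; before that I would pin down $R(t)$ in a fixed compact interval. For the latter, note that $H(\rho)\ge 0$ (one has $H(1)=0$ and $H'(\rho)=(\gamma-1)(\rho^{\gamma-2}-\rho^{-2})$, of the sign of $\rho-1$) and that the two dissipation integrals in the basic energy identity (2.8) are nonnegative; hence (2.8) forces $P(R(t))\le E_0$ for all $t\in[0,T]$. A short computation with $P'(R)=-\big(\tfrac{Ca}{2}+\tfrac{2}{We}\big)R^{2-3\gamma_0}+\tfrac{2}{We}R+\tfrac{Ca}{2}R^2$ shows $P$ is strictly decreasing on $(0,1)$, strictly increasing on $(1,\infty)$, with $P(1)=0$ and $P(R)\to+\infty$ as $R\to0^+$ or $R\to\infty$. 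Therefore $\{R>0:P(R)\le E_0\}$ is a compact interval $[c_1,C_1]\subset(0,\infty)$ depending only on the initial data (hence independent of the domain size $k$), and by continuity in $t$ one has $R(t)\in[c_1,C_1]$ throughout $[0,T]$.

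Next, at $x=0$ I would rewrite the viscous boundary flux using the continuity equation. From (2.1) one gets $\rho\,\partial_x(r^2u)=-\partial_t\log\rho$, and since $\partial_x r=(\rho r^2)^{-1}$ (differentiate (2.5)) also $\rho\,r^2\partial_x u=\rho\,\partial_x(r^2u)-2u/r=-\partial_t\log\rho-2u/r$. Evaluating at $x=0$, where $r=R$ and $u=\dot R$ — legitimate in the trace sense by the regularity in the definition of a generalized solution — and substituting into (2.4) gives, with $\varrho(t):=\rho|_{x=0}(t)$ and $z(t):=\log\varrho(t)+2\log R(t)$,
\[
\mu\,\dot z \;=\; g(R)-\frac{Ca}{2}\varrho^{\gamma}\;=\;g(R)-\frac{Ca}{2}R^{-2\gamma}e^{\gamma z},\qquad g(R):=\Big(\tfrac{Ca}{2}+\tfrac{2}{We}\Big)R^{-3\gamma_0}-\tfrac{2}{We}R^{-1},
\]
valid for a.e.\ $t$ (equivalently in integrated form). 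By the first step there are constants $B,c_1,C_1$, independent of $k$ and of $t$, with $|g(R)|\le B$ and $C_1^{-2\gamma}\le R^{-2\gamma}\le c_1^{-2\gamma}$.

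From this ODE the two bounds fall out. For the upper bound, $\mu\dot z\le B-\tfrac{Ca}{2}C_1^{-2\gamma}e^{\gamma z}$, so $\dot z<0$ whenever $z>M:=\gamma^{-1}\log\!\big(2BC_1^{2\gamma}/Ca\big)$; a standard comparison (positive-part) argument then yields $z(t)\le z^{\ast}:=\max\{z(0),M\}$ for all $t$, whence $\varrho(t)=e^{z(t)}R(t)^{-2}\le e^{z^{\ast}}c_1^{-2}=:C$ — in fact a bound uniform in $t$. For the lower bound, inserting $z\le z^{\ast}$ into the ODE gives $\mu\dot z\ge -B-\tfrac{Ca}{2}c_1^{-2\gamma}e^{\gamma z^{\ast}}=:-K$, so $z(t)\ge z(0)-(K/\mu)t$ and therefore $\varrho(t)\ge e^{z(0)}C_1^{-2}e^{-Kt/\mu}\ge c(T)>0$ on $[0,T]$; here $z(0)=\log(\rho_0|_{x=0})+2\log R_0$ is finite because $\inf\rho_0>0$, $0<R_0<\infty$, and $\log\rho_0\in H^1_{\mathrm{loc}}$ near $x=0$ (so that $\rho_0|_{x=0}$ is a well-defined positive number).

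The main obstacle is not the idea but the rigor of the boundary step: one must justify that $\rho\,r^2\partial_x u$ and $\rho$ admit traces at $x=0$, that $t\mapsto\log\rho|_{x=0}$ is absolutely continuous with $\tfrac{d}{dt}\log\rho|_{x=0}=(\partial_t\log\rho)|_{x=0}$, and hence that the differential (or integrated) identity above and the comparison argument are meaningful — all of which should follow from the regularity built into the generalized-solution class, but has to be checked carefully and in a way uniform in $k$. Once that is in place, the superlinear restoring term $-\tfrac{Ca}{2}\varrho^{\gamma}$ coming from the polytropic bubble pressure does all the work, giving the (uniform) upper bound directly and the $T$-dependent lower bound on each finite interval.
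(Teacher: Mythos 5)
Your proposal is correct and follows essentially the same route as the paper: bound $R$ via $P(R)\le E_0$ from the basic energy identity, then use (2.1) to rewrite $(\rho r^2 u_x)|_{x=0}$ as $-\partial_t\log\left(\rho|_{x=0}R^2\right)$, turning the boundary condition (2.4) into a scalar ODE for the boundary quantity. The only difference is cosmetic: the paper substitutes $(\rho|_{x=0}R^2)^{-\gamma}$ so that the ODE becomes linear and solves it explicitly by an integrating factor in (2.10)--(2.11), whereas you keep the nonlinear equation for $\log(\rho|_{x=0}R^2)$ and run a comparison argument; both give the same $T$-dependent bounds.
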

\begin{proof}
For simplicity, denote $\rho|_{x=0}$ by $\tilde{\rho}$. (2.1) gives that $$\rho r^2u_x=\rho(r^2u)_x-2r^{-1}u=-\rho^{-1}\partial_t\rho-2r^{-1}\partial_tr,$$ and thus $$(\rho r^2u_x)|_{x=0}=-\partial_t\left(\log(\tilde{\rho}R^2)\right)=\frac{1}{\gamma}(\tilde{\rho}R^2)^\gamma\partial_t\left((\tilde{\rho}R^2)^{-\gamma}\right).$$
Dividing (2.4) by $\mu(\tilde{\rho}R^2)^\gamma$ to deduce that 
\begin{equation}
\frac{d}{dt}\left((\tilde{\rho}R^2)^{-\gamma}\right)+\frac{\gamma}{\mu}\left[\left(\frac{Ca}{2}+\frac{2}{We}\right)R^{-3\gamma_0}-\frac{2}{We}R^{-1}\right](\tilde{\rho}R^2)^{-\gamma}=\frac{Ca}{2}\frac{\gamma}{\mu}R^{-2\gamma}.
\end{equation}
Solving (2.10) as an ODE of $(\tilde{\rho}R^2)^{-\gamma}$ yields that
\begin{equation}
(\tilde{\rho}R^2)^{-\gamma}(t)=(\tilde{\rho}R_0^2)^{-\gamma}S(t)+\frac{Ca}{2}\frac{\gamma}{\mu}\int_0^tR^{-2\gamma}S(t-\tau)d\tau,
\end{equation}
where $$S(t):=\exp\left\{-\frac{\gamma}{\mu}\int_0^t\left[\left(\frac{Ca}{2}+\frac{2}{We}\right)R^{-3\gamma_0}-\frac{2}{We}R^{-1}\right]d\tau\right\}.$$
Remark that $P(R)$ is convex for $R\in(0,+\infty)$, and reaches the minimum $0$ at $R=1$. Therefore, Lemma 2.2, which gives $P(R)\leq E_0$ implies that there exist $0<c<C$, such that for any $t\in [0,T]$,
\begin{equation} c\leq R(t)\leq C,\; r(x,t)=\left(R(t)^3+3\int_0^x\rho^{-1}(y,t)dy\right)^{\frac{1}{3}}\geq R(t)\geq c.
\end{equation}
Hence the proof is complete by (2.11).
\end{proof}
Using the above two lemmas, we are in a position to state the BD entropy estimate:
\begin{lem}[Bresch-Desjardins entropy estimate]
Define for $t\in[0,T]$ that
\begin{equation}
E_1(t):=\frac{1}{2}\int_0^k\left(u+\mu r^2(\log\rho)_x\right)^2dx+\frac{Ca}{2}\frac{1}{\gamma-1}\int_0^kH(\rho)dx+\frac{Ca}{2}\frac{4\mu}{\gamma}\int_0^t\int_0^k\left(r^2(\rho^{\frac{\gamma}{2}})_x\right)^2dxd\tau.
\end{equation}
There exists $C(T)>0$ such that for any $t\in[0,T]$,
$E_1(t)\leq C(T)$.
\end{lem}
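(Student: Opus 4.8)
The strategy is the Bresch--Desjardins (BD) entropy estimate adapted to the Lagrangian spherical coordinates: one derives an evolution law for the \emph{effective velocity} $v:=u+\mu r^{2}(\log\rho)_{x}$, whose $L^{2}$ norm plus $\tfrac{Ca}{2(\gamma-1)}\int H(\rho)\,dx$ is exactly the non-dissipative part of $E_{1}$, and whose dynamics generates the new dissipation $\int\big(r^{2}(\rho^{\gamma/2})_{x}\big)^{2}dx$. \textbf{Step 1: the equation for $v$.} From the mass law (2.1), $\partial_{t}\log\rho=-\rho(r^{2}u)_{x}$, and since $\partial_{t}r^{2}=2ru$ (this is where the time dependence of the domain enters), $\partial_{t}\big(r^{2}(\log\rho)_{x}\big)=2ru(\log\rho)_{x}-r^{2}\partial_{x}\big(\rho(r^{2}u)_{x}\big)$. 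Multiplying by $\mu$ and using the momentum equation (2.2) in the form $\mu r^{2}\partial_{x}\big(\rho(r^{2}u)_{x}\big)=u_{t}+\tfrac{Ca}{2}r^{2}(\rho^{\gamma})_{x}$, the term $u_{t}$ cancels against $\partial_{t}u$ and one gets
$$\partial_{t}v=-\frac{Ca}{2}\,r^{2}(\rho^{\gamma})_{x}+2\mu\, ru(\log\rho)_{x}.$$

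\textbf{Step 2: energy identity for $v$.} Multiplying the above by $v$ and integrating over $(0,k)$, split $v=u+\mu r^{2}(\log\rho)_{x}$ inside the pressure term. The $\mu r^{2}(\log\rho)_{x}$ part equals pointwise (using $\rho>0$) $-\tfrac{Ca}{2}\tfrac{4\mu}{\gamma}\big(r^{2}(\rho^{\gamma/2})_{x}\big)^{2}$, i.e. minus the dissipation density of $E_{1}$; the $u$ part is treated exactly as in the proof of Lemma 2.2, namely $r^{2}(\rho^{\gamma})_{x}u=\big((\rho^{\gamma}-1)r^{2}u\big)_{x}+\tfrac{1}{\gamma-1}\partial_{t}H(\rho)$, so after integration it contributes $-\tfrac{Ca}{2(\gamma-1)}\tfrac{d}{dt}\int H(\rho)\,dx$ plus the single boundary term $\tfrac{Ca}{2}(\tilde\rho^{\gamma}-1)R^{2}\dot R$ at $x=0$ (using $u|_{x=k}=0$, $u|_{x=0}=\dot R$, $r|_{x=0}=R$; here $\tilde\rho:=\rho|_{x=0}$). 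Integrating in time, and noting $E_{1}(0)<\infty$ by the assumptions on the initial data,
$$E_{1}(t)=E_{1}(0)+\frac{Ca}{2}\int_{0}^{t}(\tilde\rho^{\gamma}-1)R^{2}\dot R\,d\tau+2\mu\int_{0}^{t}\!\!\int_{0}^{k} ru(\log\rho)_{x}v\,dx\,d\tau.$$

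\textbf{Step 3: closing by Gr\"onwall.} For the boundary term, $R\in[c,C]$ by (2.12), $c(T)\le\tilde\rho\le C(T)$ by Lemma 2.4, and $|\dot R|=|u|_{x=0}|\le c^{-1/2}\|ur^{1/2}\|_{L^{\infty}}$, so by Lemma 2.3 its time integral is $\le C(T)\sqrt{t}\,(\mu^{-1}E_{0})^{1/2}\le C(T)$. For the cross term, use $\mu r^{2}(\log\rho)_{x}=v-u$ to write $2\mu ru(\log\rho)_{x}v=2\tfrac{u}{r}v^{2}-2\tfrac{u^{2}}{r}v$; since $r\ge c$ by (2.12), Lemma 2.3 gives $\int_{0}^{t}\|u/r\|_{L^{\infty}}^{2}\,d\tau\le C$, hence $\|u/r\|_{L^{\infty}}\in L^{1}(0,T)$, and together with $\|u\|_{L^{2}}^{2}\le 2E_{0}$ and $\|v\|_{L^{2}}^{2}\le 2E_{1}(\tau)$ (recall $H\ge0$) the cross term is bounded by $C\int_{0}^{t}\|u/r\|_{L^{\infty}}\big(1+E_{1}(\tau)\big)\,d\tau$. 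Substituting into the identity of Step 2 yields $E_{1}(t)\le C(T)+C\int_{0}^{t}\|u/r\|_{L^{\infty}}E_{1}(\tau)\,d\tau$, and Gr\"onwall's inequality with the $L^{1}(0,T)$ weight $\|u/r\|_{L^{\infty}}$ gives $E_{1}(t)\le C(T)$.

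\textbf{Main obstacle.} The delicate point is that the cross term $2\mu\int ru(\log\rho)_{x}v$ and the $x=0$ boundary term are produced by the moving boundary and have no analogue in the classical fixed-domain BD computation, and they must be controlled \emph{without} any pointwise bound on $\rho$ (indeed Lemma 2.5 is precisely the ingredient used afterwards to obtain uniform bounds on $\rho$). This is handled by the radial improvement of Lemma 2.3, which upgrades the weak dissipation of the basic energy to an $L^{2}_{t}L^{\infty}_{x}$ control of $ur^{1/2}$ — hence of $u/r$ since $r\ge c>0$ — and by the control of $\rho|_{x=0}$ in Lemma 2.4; with these in hand the estimate closes by Gr\"onwall.
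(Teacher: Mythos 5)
Your proposal is correct and follows essentially the same route as the paper: derive the evolution equation for the effective velocity $v=u+\mu r^{2}(\log\rho)_{x}$, multiply by $v$, use the identity $(\rho^{\gamma})_{x}r^{2}u=\left((\rho^{\gamma}-1)r^{2}u\right)_{x}+\frac{1}{\gamma-1}\partial_{t}H(\rho)$ to produce the $H(\rho)$ term and the single boundary term $\frac{Ca}{2}(\tilde\rho^{\gamma}-1)R^{2}\frac{dR}{dt}$, identify the new dissipation $\frac{Ca}{2}\frac{4\mu}{\gamma}\left(r^{2}(\rho^{\gamma/2})_{x}\right)^{2}$, and close by Gr\"onwall using Lemma 2.3 (with $r\geq c$ from (2.12)) for the cross term and Lemma 2.4 for $\rho|_{x=0}$. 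The only cosmetic difference is that you use the weight $\|u/r\|_{L^{\infty}}$ while the paper uses $\|ur^{1/2}\|_{L^{\infty}}$, which are interchangeable here.
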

\begin{proof}
Using (2.1), the viscous term can be rewritten as 
$$r^2(\rho(r^2u)_x)_x=-r^2(\log\rho)_{xt}=-\partial_t\left(r^2(\log\rho)_x\right)+2(\log\rho)_xru.$$ Take it into (2.2) to show $\partial_t\left(u+\mu r^2(\log\rho)_x\right)+\frac{Ca}{2}(\rho^\gamma)_xr^2=2\mu(\log\rho)_xru$. Then multiplying the resulted equation by $\left(u+\mu r^2(\log\rho)_x\right)$ and noting that 
$$(\rho^\gamma)_xr^2u=\left((\rho^\gamma-1)r^2u)\right)_x-(\rho^\gamma-1)(r^2u)_x=\left((\rho^\gamma-1)r^2u)\right)_x+\frac{1}{\gamma-1}\partial_t H(\rho),$$ 
it follows
\begin{equation}\begin{aligned}
&\frac{1}{2}\partial_t\left(u+\mu r^2(\log\rho)_x\right)^2+\frac{Ca}{2}\frac{1}{\gamma-1}\partial_t H(\rho)+\frac{Ca}{2}\frac{4\mu}{\gamma}\left(r^2(\rho^{\frac{\gamma}{2}})_x\right)^2\\=&2\mu(\log\rho)_xru(u+\mu(\log\rho)_xr^2)+\frac{Ca}{2}\left((1-\rho^\gamma)r^2u\right)_x.
\end{aligned}\end{equation}
Integrating (2.14) on $[0,k]$ yields that  
\begin{equation}\begin{aligned}
&\frac{1}{2}\frac{d}{dt}\int_0^k\left(u+\mu r^2(\log\rho)_x\right)^2dx+\frac{Ca}{2}\frac{1}{\gamma-1}\frac{d}{dt}\int_0^k H(\rho)dx+\frac{Ca}{2}\frac{4\mu}{\gamma}\int_0^k\left(r^2(\rho^{\frac{\gamma}{2}})_x\right)^2dx\\
=&2\mu\int_0^k(\log\rho)_xru(u+\mu(\log\rho)_xr^2)dx+\frac{Ca}{2}(\tilde{\rho}^\gamma-1)R^2\frac{dR}{dt}.
\end{aligned}\end{equation} 
Then controlling the two terms on the right-hand side of (2.15) using Lemma 2.3 and Lemma 2.4, we find
$$\begin{aligned}
\frac{Ca}{2}(\tilde{\rho}^\gamma-1)R^2\frac{dR}{dt}\leq C(T)\|ur^{\frac{1}{2}}\|_{L^\infty},
\end{aligned}$$
and
$$\begin{aligned}
&\mu\int_0^k(\log\rho)_xru(u+\mu(\log\rho)_xr^2)dx\\
\leq &2\left\|\frac{u}{r}\right\|_{L^\infty}\int_0^k\mu r^2(\log\rho)_x(u+\mu(\log\rho)_xr^2)dx\\
\leq&C\|ur^{\frac{1}{2}}\|_{L^\infty}\left[\int_0^k\left(u+\mu r^2(\log\rho)_x\right)^2dx+\int_0^ku^2dx\right],
\end{aligned}$$
which together with (2.15) complete the proof by Gronwall's inequality.
\end{proof}
Lemma 2.5 in fact provides a control for the $x$-derivative of $\rho$. Hence with the help of the radial property and Lemma 2.4, Lemma 2.2 and Lemma 2.5 give the lower and upper bounds of $\rho$: 
\begin{lem}[Lower and upper bound of density]
There exist $\underline{\rho}(T)>0$, $\overline{\rho}(T)>0$ such that for any $(x,t)\in [0,k]\times[0,T]$, $\underline{\rho}(T)\leq\rho(x,t)\leq\overline{\rho}(T)$. 
\end{lem}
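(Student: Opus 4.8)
The plan is to turn the Bresch--Desjardins estimate into a genuine pointwise bound on $\rho$. The key is that Lemma 2.5, together with the basic energy of Lemma 2.2, controls the $x$-derivative of $\log\rho$ in $L^2$ uniformly in time: since
\[
\mu\|r^2(\log\rho)_x(\cdot,t)\|_{L^2}\le\|u+\mu r^2(\log\rho)_x\|_{L^2}+\|u\|_{L^2}\le\sqrt{2E_1(t)}+\sqrt{2E_0}\le C(T),
\]
we get $\|r^2(\log\rho)_x(\cdot,t)\|_{L^2}^2\le C(T)$ for every $t\in[0,T]$. I would also record the uniform lower bound $r(x,t)\ge R(t)\ge c>0$, which follows from the radial formula (2.5) and the bound on $R$ obtained inside the proof of Lemma 2.4 (inequality (2.12)); here $c$ does not depend on $k$. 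Hence, for any interval $I\subset[0,k]$ and any $x_1,x_2\in I$,
\[
|\log\rho(x_1,t)-\log\rho(x_2,t)|\le\Big(\int_I r^{-4}\,dy\Big)^{\frac12}\Big(\int_I\big(r^2(\log\rho)_y\big)^2dy\Big)^{\frac12}\le c^{-2}|I|^{\frac12}\,C(T)^{\frac12}.
\]

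Next I would fix $\ell_0=\ell_0(T)\in(0,1/2]$ so small that $c^{-2}\ell_0^{1/2}C(T)^{1/2}\le\log 2$; since $k>1\ge 2\ell_0$, every point $x_0\in[0,k]$ is contained in some subinterval $I\subset[0,k]$ of length $\ell_0$, and on such $I$ the oscillation estimate gives $\tfrac12\rho(x_0,t)\le\rho(y,t)\le 2\rho(x_0,t)$ for all $y\in I$. Now invoke the other consequence of Lemma 2.2, the $k$-independent bound $\int_0^k H(\rho)\,dx\le\frac{2(\gamma-1)}{Ca}E_0=:C$, together with the elementary convexity facts $H(\rho)\ge c_\gamma\,\rho^{\gamma-1}$ for $\rho$ large and $H(\rho)\ge c_\gamma\,\rho^{-1}$ for $\rho$ small. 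If $\rho(x_0,t)$ exceeded a suitable $\gamma$-dependent threshold, then $\rho(y,t)$ would be large on all of $I$ and we would get $C\ge\int_I H(\rho)\,dy\ge\ell_0\,c_\gamma\big(\tfrac12\rho(x_0,t)\big)^{\gamma-1}$, forcing $\rho(x_0,t)\le\overline{\rho}(T)$; symmetrically, if $\rho(x_0,t)$ were below a suitable threshold we would get $C\ge\ell_0\,c_\gamma\big(2\rho(x_0,t)\big)^{-1}$, forcing $\rho(x_0,t)\ge\underline{\rho}(T)>0$. In the remaining (trivial) case $\rho(x_0,t)$ is of moderate size and is bounded above and below by absolute constants. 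Combining the cases yields $\underline{\rho}(T)\le\rho(x_0,t)\le\overline{\rho}(T)$ for every $(x_0,t)\in[0,k]\times[0,T]$, with $\underline{\rho},\overline{\rho}$ depending only on $T$ and the fixed constants, not on $k$.

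I expect the main obstacle to be exactly the control of $\int_I r^{-4}\,dy$ by a $k$-independent quantity: the crude bound $\int_0^k r^{-4}\,dy\le c^{-4}k$ is useless, and trying to improve it via $\partial_x(r^{-1})=-(\rho r^4)^{-1}$ would require precisely the upper bound on $\rho$ that is being proved — this circularity is why the Bresch--Desjardins entropy (rather than the basic energy alone) is indispensable. The local-to-global device above circumvents the issue by only ever integrating over intervals of the fixed length $\ell_0$, on which the bare bound $r\ge c$ already suffices; the cost is that $\ell_0$, and therefore $\underline{\rho}$ and $\overline{\rho}$, inherits the time-dependence of the Bresch--Desjardins constant $C(T)$. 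The only remaining routine points are the convexity estimates on $H$ and the bookkeeping of thresholds, which I would not spell out in detail.
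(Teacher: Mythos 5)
Your proof is correct, and it reaches the bounds by a genuinely different device at the one delicate point. Both you and the paper combine the same two ingredients — the Bresch--Desjardins bound on $\|r^2(\log\rho)_x\|_{L^2}$ and the basic-energy bound on $\int_0^k H(\rho)\,dx$ — but the paper estimates the \emph{total} variation of $\log\rho$ over all of $[0,k]$, which forces it to prove the $k$-uniform weight bound $\int_0^k r^{-4}dx\leq C(T)$. It does so by a separate Gronwall argument in time: $\frac{d}{dt}\int_0^k r^{-4}dx\leq C\|ur^{1/2}\|_{L^\infty}\int_0^k r^{-4}dx$ together with the $k$-independent initial bound $\int_0^k r_0^{-4}dx\leq R_0^{-1}\sup\rho_0$, so there is no circularity of the kind you feared. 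It then pins a base point $x_0(t)$ with $\rho(x_0(t),t)$ of order one via a mean-value (Chebyshev) argument on $H(\rho)$ and transports the bound by the oscillation estimate. You instead localize to intervals of fixed length $\ell_0(T)$, where the crude bound $r\geq c$ already controls the weight, and then apply the $H$-energy directly on that interval using the comparability $\tfrac12\rho(x_0,t)\leq\rho\leq 2\rho(x_0,t)$; this removes the need for both the $\int r^{-4}$ Gronwall step and the mean-value base point, and your constants remain independent of $k$ as required. The only thing your route does not deliver is the auxiliary estimate $\int_0^k r^{-4}dx\leq C(T)$ itself, which the paper reuses later (it appears again in (2.24) in the proof of Lemma 2.7), so in the paper's overall scheme that estimate would still have to be proved separately; for the present lemma, your localized argument is a clean and somewhat more elementary alternative.
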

\begin{proof}
Let $f_i(\alpha),\; i=1,2$ denote the two roots of $H(\rho)=\alpha$ . Let $\alpha\geq\frac{1}{k}\left(\frac{Ca}{2}\frac{1}{\gamma-1}\right)^{-1}E_0$, and thus $\alpha\geq\frac{1}{k}\int_0^kH(\rho)dx$. Since $\forall t\in [0,T]$,
$$\begin{aligned}
k>&m\left\{x\in(0,k):H(\rho)(x,t)>\alpha\right\}\\
=&m\left\{x\in(0,k):\rho(x,t)<f_1(\alpha)\right\}+m\left\{x\in(0,k):\rho(x,t)>f_2(\alpha)\right\},
\end{aligned}$$
there exists $x_0=x_0(t)\in[0,k]$ for each $t\in [0, T]$ such that $f_1(\alpha)\leq\rho(x_0(t),t)\leq f_2(\alpha)$. Then for any $(x,t)\in[0,k]\times[0,T]$, 
\begin{equation}
\left|\log\frac{\rho(x,t)}{\rho(x_0(t),t)}\right|
\leq\int_0^k|(\log\rho)_x|dx\leq\left(\int_0^k\left(r^2(\log\rho)_x\right)^2dx
\right)^{\frac{1}{2}}\left(\int_0^k r^{-4}dx\right)^{\frac{1}{2}}.
\end{equation}
To control the term $\int_0^kr^{-4}dx$, use the definition of $r$ (2.5) and (2.12) to calculate that
\begin{equation}
\frac{d}{dt}\int_0^kr^{-4}dx=-4\int_0^k r^{-5}udx\leq C\|ur^{\frac{1}{2}}\|_{L^\infty}\int_0^kr^{-4}dx.
\end{equation}
Applying Gronwall's inequality to (2.17) with the initial data $$\int_0^kr_0^{-4}dx\leq\int_0^k\left(R_0^3+3x\inf_{[0,k]}\rho_0^{-1}\right)^{-\frac{4}{3}}dx\leq R_0^{-1}\sup_{[0,k]}\rho_0$$ shows that $\int_0^kr^{-4}dx\leq C(T)$.
Therefore, in view of (2.8)(2.13)(2.16), there exist positive constant $C(T)$ such that $\left|\log\frac{\rho(x,t)}{\rho(x_0(t),t)}\right|\leq C(T)$, and thus the proof is complete by the selection of $\rho(x_0(t),t)$.
\end{proof}
To complete the a-priori estimates for generalized solution $(u,\;\rho,\; R)$, it remains to control the $L^\infty_tL^2_x$ norms of 1-order derivatives of $u$ and $\rho$, together with $r^2\partial_t\partial_x(\log\rho)$ and the higher order dissipation $r^2\partial_t\partial_x u$. Noting that $r^2u_x=\-\rho^{-2}\rho_t-\rho^{-1}\frac{u}{r}$ and $-\frac{Ca}{2}(\rho^\gamma)_x r^2=u_t+\mu r^2(\log\rho)_{xt}$, it suffices to establish the energy identity  and the BD entropy estimate of $(u_t,\;\rho_t,\; R_t)$.
\begin{lem}[Energy estimate for 1-order derivatives]
Define for $t\in[0,T]$ that 
$$\begin{aligned}
E_2(t):=&\frac{1}{2}\int_0^ku_t^2dx+\frac{Ca}{2}\int_0^k\left[\frac{2\gamma}{(\gamma-1)^2}\left(\rho^{\frac{\gamma-1}{2}}\right)_t^2+\frac{4}{\gamma-1}\rho^{\frac{\gamma-1}{2}}\left(\rho^{\frac{\gamma-1}{2}}\right)_t\frac{u}{r}+3\rho^{\gamma-1}\frac{u^2}{r^2}\right]dx\\
&+\frac{\mu}{2}\int_0^t\int_0^k\rho(r^2u_{tx})^2dxd\tau+\mu\int_0^t\int_0^k\rho^{-1}\frac{u_t^2}{r^2}dxd\tau\\
&+\left[\frac{3\gamma_0}{2}\left(\frac{Ca}{2}+\frac{2}{We}\right)R^{-3\gamma_0+1}+\frac{1}{We}\right]\left(\frac{dR}{dt}\right)^2.
\end{aligned}$$
There exists $C(T)>0$ such that $E_2(t)\leq C(T)$, $\forall t\in [0,T]$.
\end{lem}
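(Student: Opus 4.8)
I sketch a plan for the energy estimate underlying Lemma 2.7.

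The plan is to run an energy estimate on the system obtained by differentiating (2.2) and (2.4) in time, as anticipated by the remark preceding the lemma: since $r^2u_x=-\rho^{-2}\rho_t-\rho^{-1}\tfrac ur$ and $-\tfrac{Ca}{2}(\rho^\gamma)_xr^2=u_t+\mu r^2(\log\rho)_{xt}$, the functional $E_2$ controls precisely the $L^\infty_tL^2_x$ norms of the first spatial derivatives of $u$ and $\rho$, the quantity $r^2\partial_t\partial_x(\log\rho)$, and the top-order dissipation $r^2\partial_t\partial_xu$. First I would differentiate (2.2) in $t$; using $r_t=u$, hence $\partial_t(r^2)=2ru$, this produces $u_{tt}+\tfrac{Ca}{2}\partial_t\big((\rho^\gamma)_xr^2\big)=\mu\partial_t\big(r^2\partial_x(\rho(r^2u)_x)\big)$. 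I would also differentiate the dynamic boundary condition (2.4) in $t$, which yields the boundary value of $\mu\partial_t(\rho r^2u_x)$ at $x=0$ in terms of $\rho_t|_{x=0}$, $R$ and $R_t$.

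Next I would multiply the time-differentiated momentum equation by $u_t$ and integrate over $x\in(0,k)$. The inertial term gives $\tfrac12\frac{d}{dt}\int_0^ku_t^2\,dx$. Integrating the time-differentiated viscous term by parts in $x$ and using the same splitting $\rho(r^2u)_x=\rho r^2u_x+2r^{-1}u$ as in the proof of Lemma 2.2 to cancel the cross term, I expect to recover the two nonnegative dissipations $\tfrac\mu2\int_0^k\rho(r^2u_{tx})^2\,dx$ and $\mu\int_0^k\rho^{-1}\tfrac{u_t^2}{r^2}\,dx$ appearing in $E_2$, together with a collection of lower-order terms (products of $u$, $\tfrac ur$, $\rho$, $(\log\rho)_x$ and their first time derivatives) and boundary contributions at $x=0$ and $x=k$; at $x=k$ the condition $u|_{x=k}=0$ forces $u_t|_{x=k}=0$, killing those terms. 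For the pressure contribution $\tfrac{Ca}{2}\int_0^k\partial_t\big((\rho^\gamma)_xr^2\big)u_t\,dx$ I would integrate by parts in $x$ and substitute $\rho^{-1}\rho_t=-(r^2u)_x$ from (2.1) to rewrite everything in terms of $\rho$; collecting the resulting terms should produce exactly $\frac{d}{dt}$ of the quadratic form $\tfrac{Ca}{2}\int_0^k\big[\tfrac{2\gamma}{(\gamma-1)^2}(\rho^{\frac{\gamma-1}{2}})_t^2+\tfrac{4}{\gamma-1}\rho^{\frac{\gamma-1}{2}}(\rho^{\frac{\gamma-1}{2}})_t\tfrac ur+3\rho^{\gamma-1}\tfrac{u^2}{r^2}\big]dx$ occurring in $E_2$, modulo lower-order terms. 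This form is positive definite, hence comparable to $\int_0^k\big((\rho^{\frac{\gamma-1}{2}})_t^2+\tfrac{u^2}{r^2}\big)dx$ with constants depending on $\underline\rho(T),\overline\rho(T)$, since its discriminant obeys $\tfrac{16}{(\gamma-1)^2}<\tfrac{24\gamma}{(\gamma-1)^2}$ for $\gamma>1$.

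The boundary terms surviving at $x=0$ — arising from the viscous and pressure integrations by parts and involving $u_t|_{x=0}=R_{tt}$ — must be combined with the time-differentiated (2.4). Using the bounds $c\le R\le C$ from (2.12), $c(T)\le\rho|_{x=0}\le C(T)$ from Lemma 2.4, and the ODE (2.10) to re-express $\rho_t|_{x=0}$, these boundary terms should collapse into $\frac{d}{dt}$ of $\big[\tfrac{3\gamma_0}{2}\big(\tfrac{Ca}{2}+\tfrac{2}{We}\big)R^{-3\gamma_0+1}+\tfrac{1}{We}\big](R_t)^2$, i.e. the boundary energy in $E_2$ (whose prefactor is manifestly positive for $R>0$), plus lower-order terms. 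Finally I would integrate in time and bound every error term using the quantities already controlled: $\int_0^t\|ur^{1/2}\|_{L^\infty}\,d\tau\le\sqrt T\,(\mu^{-1}E_0)^{1/2}$ (Lemma 2.3), $\underline\rho(T)\le\rho\le\overline\rho(T)$ (Lemma 2.6), $r\ge c$ (from (2.12)), $\|r^2(\log\rho)_x\|_{L^2}\le C(T)$ and $\int_0^t\|r^2(\rho^{\gamma/2})_x\|_{L^2}^2\,d\tau\le C(T)$ (Lemma 2.5), and $\int_0^kr^{-4}\,dx\le C(T)$ (established inside the proof of Lemma 2.6); any top-order error piece of the form $\varepsilon\int_0^k\rho(r^2u_{tx})^2\,dx$ is absorbed into the dissipation. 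Gronwall's inequality then gives $E_2(t)\le C(T)$ for all $t\in[0,T]$.

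I expect the main obstacle to be the bookkeeping of the boundary terms at $x=0$: one must verify that the second-order quantity $R_{tt}$ never survives outside a total time derivative, and that the combination of viscous, pressure and time-differentiated-(2.4) boundary contributions reproduces exactly the positive boundary energy of $E_2$ up to terms controlled by Lemmas 2.3 and 2.4 — together with checking that none of the error terms generated by differentiating the $\dot H^1$-type norms in time is of strictly higher order than the dissipation available to absorb it.
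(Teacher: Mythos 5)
Your overall skeleton is exactly the paper's route: differentiate (2.2) in $t$, multiply by $u_t$, integrate by parts using $\rho(r^2u)_x=\rho r^2u_x+2r^{-1}u$ to extract the two dissipations, rewrite the pressure contribution via (2.1) as $\frac{d}{dt}$ of the quadratic form in $(\rho^{\frac{\gamma-1}{2}})_t$ and $\frac{u}{r}$, and pair the surviving $x=0$ terms with the time-differentiated boundary condition (2.4) so that $R_{tt}$ only appears inside total time derivatives; the positive-definiteness remark and the absorption of $\varepsilon$-multiples of the dissipation are also consistent with the paper. However, there is a genuine gap in how you propose to close the Gronwall argument. The interior error terms produced by this computation are not all ``lower-order products controllable by the quantities already established'': the collections $J$ and $K$ contain cubic and quartic terms such as $\int_0^k\rho^{\gamma-4}\rho_t^3\,dx$ and $\int_0^k\rho^2(r^2u)_x^2(r^2u_t)_x\,dx$, and none of the bounds you list (Lemma 2.3, Lemma 2.5, Lemma 2.6, $\int_0^k r^{-4}dx\le C(T)$) gives the pointwise control of $\rho_t$ (equivalently of $\rho(r^2u)_x=\rho r^2u_x+2\frac{u}{r}$) that these terms require; $\|ur^{\frac12}\|_{L^\infty}$ only handles the $\frac{u}{r}$ part, while $r^2u_x$ is known merely in $L^2_x$.

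The paper's essential extra ingredient, which your plan omits, is the estimate (2.23)--(2.24): divide the momentum equation (2.2) by $r^2$, integrate in $x$ from the bubble boundary, evaluate the boundary contribution with the dynamic condition (2.4), and use $\int_0^k r^{-4}dx\le C(T)$ to obtain
$\|\mu\rho^{-1}\rho_t\|_{L^\infty}\le C(T)\bigl(1+\|u_t\|_{L^2}+\|ur^{\frac12}\|_{L^\infty}\bigr)$.
This converts the supercritical terms into products of $E_2$ with coefficients that are either time-integrable (via Lemma 2.2, since $(\rho^{\frac{\gamma-1}{2}})_t^2\le C(T)(\rho(r^2u_x)^2+2\rho^{-1}\frac{u^2}{r^2})$) or already part of $E_2$, so that (2.27) closes by Gronwall. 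Note you cannot substitute a Sobolev embedding through $(\log\rho)_{xt}$ here, since that quantity is only controlled by the BD-type estimate of Lemma 2.8, which is proved after and relies on the present lemma. Your identification of the boundary bookkeeping as ``the main obstacle'' therefore misses where the real difficulty lies; the boundary cubic terms $R^{-3\gamma_0}(\frac{dR}{dt})^3$ and $(u^2u_t)|_{x=0}$ are comparatively routine, handled by $\|ur^{\frac12}\|_{L^\infty}$ and by absorbing $\|u_tr^{\frac12}\|_{L^\infty}^2$ into the dissipation.
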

\begin{proof}
To establish the energy identity for $(u_t,\;\rho_t,\;R_t)$, we differentiate (2.2) with respect to $t$, multiply the result equation by $u_t$ and compute each term.\\
\textbf{Step 1.} The treatment of 
$\left((\rho^\gamma)_xr^2\right)_tu_t$.\\
Exchanging the $x,t$ derivatives and applying integration by parts yield that
$$\begin{aligned}
\left((\rho^\gamma)_xr^2\right)_tu_t=
\left[(\rho^\gamma r^2)_tu_t\right]_x-(\rho^\gamma)_t(r^2u)_{xt}+(\rho^\gamma)_t\left((r^2)_tu\right)_x-\rho^\gamma\left((r^2)_tu_t\right)_x.
\end{aligned}$$
Then using (2.1), the second term can be rewritten as
$$-(\rho^\gamma)_t(r^2u)_{xt}=(\rho^\gamma)_t(\rho^{-2}\rho_t)_t=\frac{2\gamma}{(\gamma-1)^2}\partial_t\left[(\rho^{\frac{\gamma-1}{2}})_t\right]^2-\frac{\gamma(\gamma+1)}{2}\rho^{\gamma-4}(\partial_t\rho)^3.$$
Noting that $r_t=u$, exchanging the derivatives in the forth term gives
$$-\rho^\gamma\left((r^2)_tu_t\right)_x=-\rho^\gamma(r(u^2)_t)_x=-\left[\rho^\gamma(ru^2)_x\right]_t+\rho^\gamma(u^3)_x+(\rho^\gamma)_t(ru^2)_x.$$
Using (2.1) again, the first term on right-hand side is
$$-\left[\rho^\gamma(ru^2)_x\right]_t=\left[3\rho^{\gamma-1}\frac{u^2}{r^2}+\frac{4}{\gamma-1}\rho^{\frac{\gamma-1}{2}}(\rho^{\frac{\gamma-1}{2}})_t\frac{u}{r}\right]_t.$$
The rest nonlinear terms are
$$3(\rho^\gamma)_t(ru^2)_x=-3(\rho^\gamma)_t\left(2\rho^{-2}\rho_t\frac{u}{r}+3\rho^{-1}\frac{u^2}{r^2}\right)=\frac{-18\gamma}{\gamma-1}\rho^{\frac{\gamma-1}{2}}(\rho^{\frac{\gamma-1}{2}})_t\frac{u^2}{r^2}-\frac{24\gamma}{(\gamma-1)^2}(\rho^{\frac{\gamma-1}{2}})_t^2\frac{u}{r},$$
and
$$\rho^\gamma(u^3)_x=-\rho^\gamma(3\rho^{-2}\rho_t\frac{u^2}{r^2}+6\rho^{-1}\frac{u^3}{r^3})=-6\rho^{\gamma-1}\frac{u^3}{r^3}-\frac{6}{\gamma-1}\rho^{\frac{\gamma-1}{2}}(\rho^{\frac{\gamma-1}{2}})_t\frac{u^2}{r^2}.$$
Let $J$ collect all the nonlinear terms appeared, namely
$$J:=-\frac{\gamma(\gamma+1)}{2}\rho^{\gamma-4}(\partial_t\rho)^3-\frac{24\gamma}{(\gamma-1)^2}(\rho^{\frac{\gamma-1}{2}})_t^2\frac{u}{r}-\frac{6(3\gamma+1)}{\gamma-1}\rho^{\frac{\gamma-1}{2}}(\rho^{\frac{\gamma-1}{2}})_t\frac{u^2}{r^2}-6\rho^{\gamma-1}\frac{u^3}{r^3}.$$
As a result, 
\begin{equation}\begin{aligned}
\left((\rho^\gamma)_xr^2\right)_tu_t=\left[(\rho^\gamma r^2)_tu_t\right]_x+\frac{2\gamma}{(\gamma-1)^2}\partial_t(\rho^{\frac{\gamma-1}{2}})_t^2+\partial_t\left[3\rho^{\gamma-1}\frac{u^2}{r^2}+\frac{4}{\gamma-1}\rho^{\frac{\gamma-1}{2}}(\rho^{\frac{\gamma-1}{2}})_t\frac{u}{r}\right]+J.
\end{aligned}\end{equation}
\textbf{Step 2.} The treatment of $\left[(\rho(r^2u)_x)_xr^2\right]_tu_t$.\\
Exchanging $x,t$ derivatives and integrating by parts give
$$
\left[(\rho(r^2u)_x)_xr^2\right]_tu_t=
\left[(\rho(r^2u)_xr^2)_tu_t\right]_x-\rho(r^2u)_x((r^2)_tu_t)_x-(\rho(r^2u)_x)_t(r^2u_t)_x.$$
The boundary term can be rewritten as
$$\left[(\rho(r^2u)_xr^2)_tu_t\right]_x=\left[(\rho r^4u_x)_tu_t\right]_x+\left[(2ru)_tu_t\right]_x=\left[(\rho r^4u_x)_tu_t\right]_x+\left(2 u^2u_t\right)_x+(2ru_t^2)_x.$$
The third term, which involves the dissipation is
$$-(\rho(r^2u)_x)_t(r^2u_t)_x=-\rho(r^2u_t)_x^2+\rho^2(r^2u)_x^2(r^2u_t)_x+6\frac{u^2}{r^2}(r^2u_t)_x-4\rho\frac{u}{r}(r^2u)_x(r^2u_t)_x.$$
Using $r_t=u$, the second term is
$$-\rho(r^2u)_x((r^2)_tu_t)_x=6(r^2u)_x\frac{u}{r}\frac{u_t}{r}-2\rho(r^2u)_x^2\frac{u_t}{r}-2\rho(r^2u)_x\frac{u}{r}(r^2u_t)_x. $$
Let $K$ collect all the nonlinear terms, namely
$$K:=-6\rho\frac{u}{r}(r^2u)_x(r^2u_t)_x+\rho^2(r^2u)_x^2(r^2u_t)_x-2\rho(r^2u)_x^2\frac{u_t}{r}+6\frac{u^2}{r^2}(r^2u_t)_x+6(r^2u)_x\frac{u}{r}\frac{u_t}{r}.$$
Note that the cross term in $-\rho(r^2u_t)_x^2$ is cancelled by $(2ru_t^2)_x$. Hence we find
\begin{equation}\begin{aligned}
\left[(\rho(r^2u)_x)_xr^2\right]_tu_t=\left[(\rho r^4u_x)_tu_t\right]_x+\left(2 u^2u_t\right)_x-\rho(r^2u_{xt})^2-2\rho^{-1}\frac{u_t^2}{r^2}+K.
\end{aligned}\end{equation}
\textbf{Step 3.} The boundary term $\left.\left[\frac{Ca}{2}(\rho^\gamma r^2)_tu_t-\mu(\rho r^4u_x)_tu_t-2\mu u^2u_t\right]\right|_{x=0}$.\\
Differentiate the boundary condition (2.4) with respect to $t$.
\begin{equation}
\left.\left(\frac{Ca}{2}(\rho^\gamma r^2)_t-\mu\rho (r^4u_x)_t\right)\right|_{x=0}=\left[\left(\frac{Ca}{2}+\frac{2}{We}\right)R^{-3\gamma_0+2}-\frac{2}{We}R\right]_t.
\end{equation}
Multiplying (2.20) by $u_t|_{x=0}=\frac{d^2R}{dt^2}$, we obtain
\begin{equation}\begin{aligned}
&\left.\left[\frac{Ca}{2}(\rho^\gamma r^2)_tu_t-\mu(\rho r^4u_x)_tu_t\right]\right|_{x=0}\\
=&\left[\left(\frac{Ca}{2}+\frac{2}{We}\right)R^{-3\gamma_0+2}-\frac{2}{We}R\right]_t\frac{d^2R}{dt^2}\\
=&-\frac{3\gamma_0-2}{2}\left(\frac{Ca}{2}+\frac{2}{We}\right)R^{-3\gamma_0+1}\frac{d}{dt}\left(\frac{dR}{dt}\right)^2-\frac{1}{We}\frac{d}{dt}\left(\frac{dR}{dt}\right)^2\\
=&-\frac{3\gamma_0-2}{2}\left(\frac{Ca}{2}+\frac{2}{We}\right)\frac{d}{dt}\left[R^{-3\gamma_0+1}\left(\frac{dR}{dt}\right)^2\right]-\frac{1}{We}\frac{d}{dt}\left(\frac{dR}{dt}\right)^2\\
&-\frac{(3\gamma_0-2)(3\gamma_0-1)}{2}\left(\frac{Ca}{2}+\frac{2}{We}\right)R^{-3\gamma_0}\left(\frac{dR}{dt}\right)^3.
\end{aligned}\end{equation}
Let $L$ collect the nonlinear terms, namely
$$L=-\frac{(3\gamma_0-2)(3\gamma_0-1)}{2}\left(\frac{Ca}{2}+\frac{2}{We}\right)R^{-3\gamma_0}\left(\frac{dR}{dt}\right)^3-2\mu(u^2u_t)|_{x=0}.$$
Adding (2.18)(2.19) up with coefficient $\frac{Ca}{2}$ and $\mu$ respectively and integrating on $[0,k]$, one concludes with the help of (2.21) that
\begin{equation}\begin{aligned}
&\frac{1}{2}\frac{d}{dt}\int_0^ku_t^2dx+\frac{Ca}{2}\frac{d}{dt}\int_0^k\left[\frac{2\gamma}{(\gamma-1)^2}(\rho^{\frac{\gamma-1}{2}})_t^2+\frac{4}{\gamma-1}\rho^{\frac{\gamma-1}{2}}(\rho^{\frac{\gamma-1}{2}})_t\frac{u}{r}+3\rho^{\gamma-1}\frac{u^2}{r^2}\right]dx\\
&+\frac{d}{dt}\left[\frac{3\gamma_0-2}{2}\left(\frac{Ca}{2}+\frac{2}{We}\right)R^{-3\gamma_0+1}\left(\frac{dR}{dt}\right)^2+\frac{1}{We}\left(\frac{dR}{dt}\right)^2\right]\\
&+\mu\int_0^k\rho(r^2u_{tx})^2dx+2\mu\int_0^k\rho^{-1}\frac{u_t^2}{r^2}dx\\
=&-\frac{Ca}{2}\int_0^kJdx+\mu\int_0^k K dx+L.
\end{aligned}\end{equation}
\textbf{Step 4.} Control of the nonlinear terms.\\
First note that by (2.2),
\begin{equation}\begin{aligned}
\int_0^x\frac{u_t}{r^2}dy
=&\left(\mu\rho(r^2u)_x-\frac{Ca}{2}\rho^\gamma\right)(x,t)-\left(\mu\rho(r^2u)_x-\frac{Ca}{2}\rho^\gamma\right)(0,t)\\
=&\left(\mu\rho(r^2u)_x-\frac{Ca}{2}\rho^\gamma\right)(x,t)+\left(\frac{Ca}{2}+\frac{2}{We}\right)R^{-3\gamma_0}-\frac{2}{We}R^{-1}-2\mu R^{-1}\frac{dR}{dt}.
\end{aligned}\end{equation}
In view of equation (2.1), there is $\rho(r^2u)_x=-\rho^{-1}\partial_t\rho$. Hence replacing $\rho(r^2u)_x$ by $-\rho^{-1}\partial_t\rho$ in (2.23) yields that
\begin{equation}\begin{aligned}
\|\mu\rho^{-1}\rho_t\|_{L^\infty}
\leq &\left\|\frac{Ca}{2}\rho^\gamma-\left(\frac{Ca}{2}+\frac{2}{We}\right)R^{-3\gamma_0}+\frac{2}{We}R^{-1}\right\|_{L^\infty}+2\mu R^{-1}\left|\frac{dR}{dt}\right|+\int_0^k\left|\frac{u_t}{r^2}\right|dx\\
\leq&C(T)+2\mu R^{-\frac{3}{2}}\|ur^{\frac{1}{2}}\|_{L^\infty}+\left(\int_0^ku_t^2dx\right)^\frac{1}{2}\left(\int_0^kr^{-4}dx\right)^\frac{1}{2}\\
\leq& C(T)\left(1+\left(\int_0^k u_t^2dx\right)^{\frac{1}{2}}+\|ur^{\frac{1}{2}}\|_{L^\infty}\right),
\end{aligned}\end{equation} 
where we used the boundedness of $\rho$ and $R$ from Lemma 2.6 and (2.12).
Using (2.24), the four terms in $J$ can be controlled as following.
$$\begin{aligned}
\left|\int_0^k\rho^{\gamma-4}\rho_t^3dx\right|
\leq &C\|\rho^{-1}\rho_t\|_{L^\infty}\int_0^k(\rho^{\frac{\gamma-1}{2}})_t^2dx\\
\leq& C(T)\left(1+\left(\int_0^k u_t^2dx\right)^{\frac{1}{2}}+\|ur^{\frac{1}{2}}\|_{L^\infty}\right)\int_0^k(\rho^{\frac{\gamma-1}{2}})_t^2dx .\end{aligned}$$
$$\left|\int_0^k(\rho^{\frac{\gamma-1}{2}})_t^2\frac{u}{r}dx\right|\leq\left\|\frac{u}{r}\right\|_{L^\infty}\int_0^k(\rho^{\frac{\gamma-1}{2}})_t^2dx\leq C\|ur^{\frac{1}{2}}\|_{L^\infty}\int_0^k(\rho^{\frac{\gamma-1}{2}})_t^2dx.$$
$$\left|\int_0^k\rho^{\frac{\gamma-1}{2}}(\rho^{\frac{\gamma-1}{2}})_t\frac{u^2}{r^2}dx\right|\leq C\|ur^{\frac{1}{2}}\|_{L^\infty}\left(\int_0^k(\rho^{\frac{\gamma-1}{2}})_t^2dx+\int_0^k\rho^{\gamma-1}\frac{u^2}{r^2} dx\right).$$
$$\left|\int_0^k\rho^{\gamma-1}\frac{u^3}{r^3}dx\right|\leq C\|ur^{\frac{1}{2}}\|_{L^\infty}\int_0^k\rho^{\gamma-1}\frac{u^2}{r^2}dx.$$
Adding the above inequalities up gives the control of $\int Jdx$:
\begin{equation}
\left|\int_0^kJdx\right|\leq C(T)\left(1+\left(\int_0^k(\rho^{\frac{\gamma-1}{2}})_t^2dx\right)^{\frac{1}{2}}+\|ur^{\frac{1}{2}}\|_{L^\infty}\right)\left(\int_0^ku_t^2dx+\int_0^k(\rho^{\frac{\gamma-1}{2}})_t^2+\int_0^k\rho^{\gamma-1}\frac{u^2}{r^2}\right).
\end{equation}
Let $\epsilon>0$ be a small constant. Using Lemma 2.6, inequality (2.24) and the equation (2.1), the terms in $K$ have the following controls.
$$\begin{aligned}
\int_0^k\rho\frac{u}{r}(r^2u)_x(r^2u_t)dx
\leq &C\left\|\frac{u}{r}\right\|_{L^\infty}\left(\int_0^k\rho(r^2u_t)_x^2dx\right)^{\frac{1}{2}}\left(\int_0^k\rho(r^2u)_x^2dx\right)^{\frac{1}{2}}\\
\leq&\epsilon\int_0^k\rho(r^2u_t)_x^2dx+C_\epsilon(T)\|ur^{\frac{1}{2}}\|_{L^\infty}^2\int_0^k(\rho^{\frac{\gamma-1}{2}})_t^2dx.
\end{aligned}$$
$$\begin{aligned}
\int_0^k\rho^2(r^2u)_x^2(r^2u_t)_xdx
\leq&\left(\int_0^k\rho(r^2u_t)_x^2dx\right)^{\frac{1}{2}}\left(\int_0^k\rho^3(r^2u)_x^4dx\right)^{\frac{1}{2}}\\
\leq&\epsilon\int_0^k\rho(r^2u_t)_x^2dx
+C_\epsilon\int_0^k\rho^3(r^2u)_x^4dx\\
\leq&\epsilon\int_0^k\rho(r^2u_t)_x^2dx+C_\epsilon(T)\int_0^k(\rho^{\frac{\gamma-1}{2}})_t^2dx\left(1+\int_0^ku_t^2dx+\|ur^{\frac{1}{2}}\|_{L^\infty}^2\right).
\end{aligned}$$
$$\int_0^k\rho(r^2u)_x^2\frac{u_t}{r}dx\leq\epsilon\int_0^k\rho^{-1}\frac{u_t^2}{r^2}dx+C_\epsilon(T)\int_0^k(\rho^{\frac{\gamma-1}{2}})_t^2dx\left(1+\int_0^ku_t^2dx+\|ur^{\frac{1}{2}}\|_{L^\infty}^2\right).$$
$$\int_0^k(r^2u)_x\frac{u}{r}\frac{u_t}{r}dx\leq\epsilon\int_0^k\rho^{-1}\frac{u_t^2}{r^2}dx+C_\epsilon(T)\|ur^{\frac{1}{2}}\|_{L^\infty}^2\int_0^k(\rho^{\frac{\gamma-1}{2}})_t^2dx.$$
$$\int_0^k(r^2u_t)_x\frac{u^2}{r^2}dx\leq\epsilon\int_0^k\rho(r^2u_t)_x^2dx+C_\epsilon(T)\|ur^{\frac{1}{2}}\|_{L^\infty}^2\int_0^k\rho^{\gamma-1}\frac{u^2}{r^2}dx.$$
Adding the above estimates up and noting that $$\int_0^k\rho(r^2u_t)_x^2dx\leq C\int_0^k\rho(r^2u_{xt})^2dx+C\int_0^k\rho^{-1}\frac{u_t^2}{r^2}dx,$$
one gets the control of $\int Kdx$:
\begin{equation}\begin{aligned}
&\left|\int_0^kKdx\right|\\
\leq&C_\epsilon(T)\left(1+\int_0^k(\rho^{\frac{\gamma-1}{2}})_t^2dx+\|ur^{\frac{1}{2}}\|_{L^\infty}^2\right)\left(\int_0^ku_t^2dx+\int_0^k(\rho^{\frac{\gamma-1}{2}})_t^2dx+\int_0^k\rho^{\gamma-1}\frac{u^2}{r^2}dx\right)\\
&+\epsilon\int_0^k\rho(r^2u_{xt})^2dx+\epsilon\int_0^k\rho^{-1}\frac{u_t^2}{r^2}dx.
\end{aligned}\end{equation}
At last, to estimate $L$, we use $\partial_x(u_t^2r)=2(\rho^{-\frac{1}{2}}\frac{u_t}{r})(\rho^\frac{1}{2}r^2u_{xt})+\rho^{-1}\frac{u_t^2}{r^2}$ to obtain
$$\left|R^{-3\gamma_0}(\frac{dR}{dt})^3\right|\leq C\|ur^{\frac{1}{2}}\|_{L^\infty}\left(\frac{dR}{dt}\right)^2,$$
and
$$|(u^2u_t)|_{x=0}|
\leq\epsilon\|u_tr^{\frac{1}{2}}\|^2_{L^\infty}
+C_\epsilon(u|_{x=0})^4\leq\epsilon\int_0^k\rho(r^2u_{tx})^2dx
+2\epsilon\int_0^k\rho^{-1}\frac{u_t^2}{r^2}dx+C_\epsilon\|ur^{\frac{1}{2}}\|_{L^\infty}^2\left(\frac{dR}{dt}\right)^2.$$
Now use the above two inequalities and (2.25)(2.26) in (2.22) and choose $\epsilon$ small enough to deduce that
\begin{equation}
\frac{dE_2}{dt}\leq C(T)\left(1+\|ur^{\frac{1}{2}}\|_{L^\infty}^2+\int_0^k(\rho^{\frac{\gamma-1}{2}})_t^2dx\right)E_2.
\end{equation}
Note that $$(\rho^{\frac{\gamma-1}{2}})_t^2=C\rho^{\gamma-3}\rho_t^2=C\rho^{\gamma+1}(r^2u)_x^2\leq C(T)\left(\rho(r^2u_x)^2+2\rho^{-1}\frac{u^2}{r^2}\right),$$ 
which is integrable in time by Lemma 2.2. Hence using Gronwall's inequality to (2.27) in view of Lemma 2.2 and Lemma 2.3, if follows that $E_2(t)\leq C(T)$, $\forall t\in[0,T].$
\end{proof}
\begin{lem}[Bresch-Desjardins entropy estimate for 1-order derivatives]
Define for $t\in[0,T]$ that
$$\begin{aligned}
E_3(t):=&\frac{1}{2}\int_0^k\left(u_t+\mu r^2(\log\rho)_{xt}\right)^2dx+\frac{Ca}{4}\mu\gamma\int_0^t\int_0^k\rho^\gamma(\log\rho)_{xt}^2dxd\tau
\\&+\frac{Ca}{2}\int_0^k\left[\frac{2\gamma}{(\gamma-1)^2}\left(\rho^{\frac{\gamma-1}{2}}\right)_t^2+\frac{4}{\gamma-1}\rho^{\frac{\gamma-1}{2}}\left(\rho^{\frac{\gamma-1}{2}}\right)_t\frac{u}{r}+3\rho^{\gamma-1}\frac{u^2}{r^2}\right]dx.
\end{aligned}$$
There exists $C(T)>0$ such that $E_3(t)\leq C(T),\;\forall t\in [0,T]$.
\end{lem}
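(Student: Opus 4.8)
The plan is to recognize that, unlike the genuine differential inequalities behind Lemmas 2.5 and 2.8, the bound $E_3(t)\le C(T)$ follows essentially algebraically from the estimates already established, with no Gronwall argument needed. The pivotal observation is the identity
\[
u_t+\mu r^2(\log\rho)_{xt}=-\frac{Ca}{2}\,r^2(\rho^\gamma)_x ,
\]
which is merely a rearrangement of the momentum equation (2.2) once one uses $r^2\bigl(\rho(r^2u)_x\bigr)_x=-r^2(\log\rho)_{xt}$ (the relation recorded just before Lemma 2.8). In other words, the effective velocity \emph{at the level of time derivatives} is a pointwise multiple of $r^2(\rho^\gamma)_x$; I will bound each of the three groups of terms in $E_3$ via this identity and the earlier lemmas.

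For the first term, write $r^2(\rho^\gamma)_x=\gamma\rho^\gamma\, r^2(\log\rho)_x$ and use $\rho\le\overline{\rho}(T)$ from Lemma 2.6 to get
\[
\frac12\int_0^k\bigl(u_t+\mu r^2(\log\rho)_{xt}\bigr)^2dx=\frac{Ca^2\gamma^2}{8}\int_0^k\rho^{2\gamma}\bigl(r^2(\log\rho)_x\bigr)^2dx\le C(T)\int_0^k\bigl(r^2(\log\rho)_x\bigr)^2dx,
\]
and the last integral is $\le C(T)$ because $E_1\le C(T)$ (Lemma 2.5) controls $\int_0^k\bigl(u+\mu r^2(\log\rho)_x\bigr)^2dx$ while $\int_0^ku^2dx\le 2E_0$ (Lemma 2.2). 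For the dissipation term, the same identity gives $\mu r^2(\log\rho)_{xt}=\bigl(u_t+\mu r^2(\log\rho)_{xt}\bigr)-u_t$, so with the uniform lower bound $r(x,t)\ge R(t)\ge c$ from (2.12) and $\rho\le\overline{\rho}(T)$,
\[
\rho^\gamma(\log\rho)_{xt}^2\le\frac{2\overline{\rho}(T)^\gamma}{\mu^2 c^4}\Bigl[\bigl(u_t+\mu r^2(\log\rho)_{xt}\bigr)^2+u_t^2\Bigr];
\]
integrating in $x$, then over $[0,t]$, and using the bound just obtained together with $\int_0^ku_t^2dx\le 2E_2\le C(T)$ (Lemma 2.8) yields $\int_0^t\!\int_0^k\rho^\gamma(\log\rho)_{xt}^2\,dx\,d\tau\le C(T)$.

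Finally, the bracket $\frac{Ca}{2}\int_0^k\bigl[\frac{2\gamma}{(\gamma-1)^2}(\rho^{\frac{\gamma-1}{2}})_t^2+\frac{4}{\gamma-1}\rho^{\frac{\gamma-1}{2}}(\rho^{\frac{\gamma-1}{2}})_t\frac ur+3\rho^{\gamma-1}\frac{u^2}{r^2}\bigr]dx$ is literally one of the five summands of $E_2$. Its integrand is a positive-definite quadratic form in $\bigl((\rho^{\frac{\gamma-1}{2}})_t,\ \frac ur\rho^{\frac{\gamma-1}{2}}\bigr)$ for $\gamma>1$ (its discriminant being $\frac{16-24\gamma}{(\gamma-1)^2}<0$), and the other four summands of $E_2$ — the two dissipation integrals, $\frac12\int u_t^2$, and the $\bigl(\frac{dR}{dt}\bigr)^2$-term whose prefactor is positive for $R>0$ — are all non-negative; hence this bracket is $\le E_2\le C(T)$ by Lemma 2.8. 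Adding the three bounds gives $E_3(t)\le C(T)$.

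The only points needing care are thus bookkeeping ones: verifying the sign of the quadratic form and of the $\frac{dR}{dt}$-prefactor so the bracket is dominated by $E_2$, and legitimately trading the $r$-weighted quantities for their unweighted counterparts through the $k$-independent lower bound (2.12). I do not expect a genuine analytic obstacle, precisely because the momentum equation trivializes the time-differentiated effective velocity. Should one instead prefer a self-contained Bresch--Desjardins derivation in the style of Lemmas 2.5 and 2.8, one would differentiate the reformulated equation $\partial_t\bigl(u+\mu r^2(\log\rho)_x\bigr)+\frac{Ca}{2}r^2(\rho^\gamma)_x=2\mu(\log\rho)_x ru$ in $t$, multiply by $u_t+\mu r^2(\log\rho)_{xt}$, and integrate by parts in $x$; this would generate the dissipation $\int\!\int\rho^\gamma(\log\rho)_{xt}^2$ structurally, at the cost of controlling an $x=0$ boundary term via the $t$-differentiated dynamic condition (2.20) and absorbing cubic remainders as in Step 4 of Lemma 2.8 — but that is unnecessary for the stated bound.
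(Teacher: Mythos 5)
Your argument is correct, and it is a genuinely different (and much shorter) route than the paper's. The paper proves this lemma in the Bresch--Desjardins style: it differentiates the reformulated momentum equation in time to get (2.28), multiplies by $u_t+\mu r^2(\log\rho)_{xt}$, produces the dissipation $\frac{Ca}{2}\mu\gamma\int\rho^\gamma r^4(\log\rho)_{xt}^2$ structurally, and then closes with estimates of the $J$-terms, of the two mixed nonlinear terms (2.32)--(2.33), and of the boundary terms (2.34)--(2.35), followed by Gronwall. You instead exploit the pointwise identity $u_t+\mu r^2(\log\rho)_{xt}=-\frac{Ca}{2}r^2(\rho^\gamma)_x$ (which the paper itself records before Lemma 2.7 and inside the proof of Lemma 2.8, and which is legitimate since $\rho(r^2u)_x=-(\log\rho)_t$ by (2.1)), so that the first summand of $E_3$ is controlled by $\|r^2(\log\rho)_x\|_{L^2}^2\le C(T)$ from Lemmas 2.2, 2.5, 2.6, the time-integrated dissipation follows by writing $\mu r^2(\log\rho)_{xt}=(u_t+\mu r^2(\log\rho)_{xt})-u_t$, using $r\ge R\ge c$ from (2.12) and $\int u_t^2\,dx\le 2E_2\le C(T)$, and the remaining bracket is verbatim a summand of $E_2$, nonnegative by the discriminant computation, hence bounded by $E_2\le C(T)$. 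Note two small points: the bound $E_2\le C(T)$ and ``Step 4'' you invoke are Lemma 2.7, not Lemma 2.8 (your numbering is shifted by one; cited correctly there is no circularity, since Lemmas 2.2--2.7 precede the present statement), and your dissipation bound carries a factor $T$ and $\overline{\rho}(T)^{\gamma}$, which is perfectly acceptable here because $C(T)$-dependence is allowed. What the paper's structural proof buys, and your shortcut does not, is the template for Section 4: the uniform-in-time analogues (Lemmas 4.5--4.6) need the dissipation to come out of the energy identity itself with time-independent constants, whereas bounding $\int_0^t\!\int\rho^\gamma(\log\rho)_{xt}^2$ by $T$ times an $L^\infty_t$ bound would degenerate as $t\to\infty$. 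For the $[0,T]$ a priori estimate asked for in this lemma, your argument is complete.
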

\begin{proof}
By differentiating (2.2) with respect to $t$ and using $(\rho(r^2u)_x)_x=(\log\rho)_{xt}$, it holds that
\begin{equation}
\partial_t\left(u_t+\mu r^2(\log\rho)_{xt}\right)+\frac{Ca}{2}\partial_t\left((\rho^\gamma)_xr^2\right)=0.
\end{equation}
Multiply (2.28) by $(u_t+\mu r^2(\log\rho)_{xt})$ to deduce that
\begin{equation}
\frac{1}{2}\partial_t\left(u_t+\mu r^2(\log\rho)_xt\right)^2+\frac{Ca}{2}\left((\rho^\gamma)_xr^2\right)_tu_t+\frac{Ca}{2}\mu\left((\rho^\gamma)_xr^2\right)_t(\log\rho)_{xt}r^2=0.
\end{equation}
Treat the second term by the same way as in  \textbf{Step 1} of Lemma 2.7, and then (2.29) yields that
\begin{equation}\begin{aligned}
&\frac{1}{2}\partial_t\left(u_t+\mu r^2(\log\rho)_xt\right)^2+\frac{Ca}{2}\partial_t\left[\frac{2\gamma}{(\gamma-1)^2}(\rho^{\frac{\gamma-1}{2}})_t^2+3\rho^{\gamma-1}\frac{u^2}{r^2}+\frac{4}{\gamma-1}\rho^{\frac{\gamma-1}{2}}(\rho^{\frac{\gamma-1}{2}})_t\frac{u}{r}\right]\\
&+\frac{Ca}{2}\left[(\rho^\gamma r^2)_tu_t\right]_x+\frac{Ca}{2}\mu\gamma(\rho^\gamma)_x(\log\rho)_t(\log\rho)_{xt}r^4+2\frac{Ca}{2}\mu(\rho^\gamma)_x(\log\rho)_{xt}r^3u\\
&+\frac{Ca}{2}\mu\gamma\rho^\gamma r^4(\log\rho)_{xt}^2+\frac{Ca}{2}J=0.
\end{aligned}\end{equation}
Estimate $\left|\int_0^k Jdx\right|$ in the same way as in \textbf{Step 4} in the proof of Lemma 2.7, and note that Lemma 2.7 already shows that $\int_0^k u_t^2dx\leq C(T)$. Then one gets the control of the $J$ terms:
\begin{equation}
\left|\int_0^kJdx\right|\leq C(T)(1+\|ur^{\frac{1}{2}}\|_{L^\infty})\left(\int_0^k(\rho^{\frac{\gamma-1}{2}})_t^2dx+\int_0^k\rho^{\gamma-1}\frac{u^2}{r^2}dx\right).
\end{equation}
Note that $-\frac{Ca}{2}(\rho^\gamma)_x r^2=\partial_t u+\mu r^2(\log\rho)_{xt}$. Using (2.24), the rest two nonlinear terms are estimated as following:
\begin{equation}\begin{aligned}
&\left|\int_0^k(\rho^\gamma)_x(\log\rho)_t(\log\rho)_{xt}r^4dx\right|\\
\leq&\epsilon\int_0^k\rho^\gamma r^4(\log\rho)_{xt}^2dx+C_\epsilon\int_0^k\rho^{-\gamma}(\rho^\gamma)_x^2(\log\rho)_t^2r^4dx\\
\leq&\epsilon\int_0^k\rho^\gamma r^4(\log\rho)_{xt}^2dx
+C_\epsilon(T)(1+\|ur^{\frac{1}{2}}\|_{L^\infty}^2)\int_0^k(u_t+\mu r^2(\log\rho)_{xt})^2dx,
\end{aligned}\end{equation}
\begin{equation}\begin{aligned}
&\left|\int_0^k(\rho^\gamma)_x(\log\rho)_{xt}r^3udx\right|\\
\leq&\epsilon\int_0^k\rho^\gamma r^4(\log\rho)_{xt}^2dx+C_\epsilon\int_0^k\rho^{-\gamma}(\rho^\gamma)_x^2r^4\frac{u^2}{r^2}dx\\
\leq&\epsilon\int_0^k\rho^\gamma r^4(\log\rho)_{xt}^2dx+C_\epsilon(T)\|ur^{\frac{1}{2}}\|_{L^\infty}^2\int_0^k(u_t+\mu r^2(\log\rho)_{xt})^2dx.
\end{aligned}\end{equation}
To control the boundary term $\left[(\rho^\gamma r^2)_tu_t\right]|_{x=0}$, first by Lemma 2.4 and (2.10), there exists $C(T)>0$ such that $\left|\frac{d}{dt}(\tilde{\rho}R^2)^\gamma\right|\leq C(T)$. Then using Lemma 2.7, it follows that 
\begin{equation}
|(\rho^\gamma r^2)_t|_{x=0}|=\left|\frac{d}{dt}(\tilde{\rho}R^2)^\gamma R^{2-2\gamma}+(\tilde{\rho}R^2)^\gamma\frac{d}{dt}R^{2-2\gamma}\right|\leq C(T).
\end{equation}
Again by Lemma 2.7, 
\begin{equation}
\int_0^t |u_t|_{x=0}|^2d\tau\leq\int_0^t\|u_tr^{\frac{1}{2}}\|_{L^\infty}dx\leq\int_0^t\int_0^k\rho(r^2u_{tx})^2dxd\tau
+2\int_0^t\int_0^k\rho^{-1}\frac{u_t^2}{r^2}dxd\tau\leq C(T).
\end{equation}
Finally, by choosing $\epsilon$ small enough, integrating (2.30) on $[0,k]$, and use Gronwall's inequality with the help of (2.31-2.35), the proof is complete.
\end{proof}
\noindent\textbf{Proof of Proposition 2.1}.
Proposition 2.1 is proved through short time existence, a-priori estimates in Lemma 2.2-2.8 and a continuity argument. The short time existence under the assumptions of Proposition 2.1 can be shown by using energy estimates and Galerkin approximation as in \cite[chapter 2]{BVPMNF}, see also \cite{GWS1D}. The equations $$r^2\partial_x(\log\rho)=(\gamma\rho^\gamma)^{-1}r^2\partial_x(\rho^\gamma)=(\frac{Ca}{2}\gamma\rho^\gamma)^{-1}(u_t+\mu r^2(\log\rho)_{xt})$$ and $r^2\partial_xu=\partial_x(r^2u)-\frac{u}{\rho r}=-\rho^{-2}\partial_t\rho-\frac{u}{\rho r}$ show that $\|r^2\partial_x(\log\rho)\|_{L^2}$ and $\|r^2\partial_xu\|_{L^2}$ are controlled by the bounds given by Lemma 2.7 and Lemma 2.8 with  coefficients depending on $\sup_{x\in[0,k]}\rho$ and $\left(\inf_{x\in[0,k]}\rho\right)^{-1}$, which are also bounded by Lemma 2.6. Therefore the global existence of generalized solution to (2.1-2.7) is proved by using a standard continuity argument with the a-priori estimates in Lemma 2.2-2.8.
\section{Construction of the global solution and the Uniqueness}
This section is devoted to the construction and its uniqueness of global generalized solutions. The construction is under the same frame as in \cite{GSS}, in which solutions on bounded domains are regarded as approximate solutions,  and then compactness argument is applied to obtain the wanted solution to the original problem on the unbounded exterior domain. Since the problem considered in this paper involves an additional free boundary compared with \cite{GSS}, for the sake of rigorousness, we give in this section an explicit description to the construction.
\subsection{Construction of the approximate solutions}
Let $\phi$ be a smooth cut off function on $\mathbb{R}^+$ such that $0\leq\phi\leq 1$, $\phi(z)=1$ for $z\in [0,\frac{1}{2}]$, $\phi(z)=0$ for $z\geq 1$, $\left|\frac{d^i\phi}{dz^i}\right|\leq C$ for $i=1,2,3$ and $z\in\mathbb{R}^+$. Define $\phi_k(z)=\phi(\frac{z}{k})$ for $k\in\mathbb{N}$. Now for the initial value $(u_0,\;\rho_0,\;R_0)$ satisfying the assumptions in Theorem 1.2, define for $k\in\mathbb{N}$ that
\begin{equation}
u_{k,0}:=u_0\phi_k,\;\rho_{k,0}^{-1}:=1+(\rho_0^{-1}-1)\phi_k,\; R_{k,0}:=R_0.
\end{equation}
Similarly, define $$r_{k,0}(x)=\left(R_{k,0}^3+3\int_0^x\rho_{k,0}^{-1}(x)dy\right)^{\frac{1}{3}}.$$ Noting that $\rho_0$ is bounded from both above and below, it is easy to check that 
\begin{equation}\begin{aligned}
&\left(u_{k,0},\;\rho_{k,0},\;r_{k,0}^2(u_{k,0})_x,\;r^2_{k,0}(\rho_{k,0})_x,\;r_{k,0}^2(\rho_{k,0}(r^2_{k,0}u_{k,0})_x)_x\right)\\ 
\rightarrow &\left(u_0,\;\rho_0,\;r_0^2(u_0)_x,\;r_0^2(\rho_0)_x,\;r_0^2(\rho_0(r_0^2u_0)_x)_x\right)\text{ in }L^2,\text{ as }k\rightarrow+\infty.
\end{aligned}\end{equation}
\indent Now let $(u_k,\;\rho_k,\;R_k)$ be given by Proposition 2.1 with initial data $(u_{k,0},\;\rho_{k,0},\;R_{k,0})$. Define $r_k$ as in (2.5) with $(u,\;\rho,\;R)$ replaced by $(u_k,\;\rho_k,\;R_k)$. Then the estimates established by Lemma 2.2-2.8 hold for each $(u_k,\;\rho_k,\; R_k)$ with the initial data $(u_{k,0},\;\rho_{k,0},\; R_{k,0})$.  Now define $\tilde{u}_k=u_k\phi_k$ and $\tilde{\rho}_k^{-1}=1+(\rho_k^{-1}-1)\phi_k$ for $k\in\mathbb{N}.$ Let $T>0$ be arbitrary. Then by definition, for $k>2N$
\begin{equation}
(\tilde{u}_k,\;\tilde{\rho}_k)=(u_k,\;\rho_k),\;\forall (x,t)\in[0,N]\times[0,T].  
\end{equation}
Denote $Q_T:=[0,+\infty)\times [0,T]$, $Q_{k,T}:=[0,k]\times[0,T]$, and abbreviate $\|\cdot\|_{L^p_tL^q_x(Q_T)}$ as $\|\cdot\|_{L^p_tL^q_x}$. Remark again that the estimates in Lemma 2.2-2.8 are not dependent on $k$ but only on the norms of the initial value, which is by (3.1) and (3.2) uniformly bounded in $k$. We then check by applying Lemma 2.2-2.8 on $(u_k,\;\rho_k,\;R_k)$ and (3.2) that
$$
\|\tilde{u}_k\|_{L^\infty_tL^2_x}=\|u_k\phi_k\|_{L^\infty_tL^2_x}\leq\|u_k\|_{L^\infty_tL^2_x(Q_{k,T})}\leq C,$$
\begin{equation}
\|\partial_t\tilde{u}_k\|_{L^\infty_tL^2_x}=\|\partial_tu_k\phi_k\|_{L^\infty_tL^2_x}\leq\|\partial_t u_k\|_{L^\infty_tL^2_x(Q_{k,T})}\leq C(T),
\end{equation}
\begin{equation}
\inf_{(x,t)\in Q_T}\tilde{\rho}_k\geq\inf_{(x,t)\in Q_{k,T}}\rho_k\geq c(T),\;\sup_{(x,t)\in Q_T}\tilde{\rho}_k\leq\sup_{(x,t)\in Q_{k,T}}\rho_k\leq C(T),
\end{equation}
\begin{equation}
\int_0^\infty H(\tilde{\rho}_k)dx\leq C(T)\int_0^\infty(\tilde{\rho}_k^{-1}-1)^2dx\leq C(T)\int_0^k(\rho_k^{-1}-1)^2dx\leq C(T)\int_0^k H(\rho_k)dx\leq C(T),
\end{equation}
$$\|\partial_t\tilde{\rho_k}^{-1}\|_{L^\infty_tL^2_x}=\|\partial_t\rho_k^{-1}\phi_k\|_{L^\infty_tL^2_x}\leq\|\partial_t\rho_k^{-1}\|_{L^\infty_tL^2_x(Q_{k,T})}\leq C(T).$$
To bound the norms involving $x$-derivatives, first by the definition (2.5) of $r_k$, it holds that
\begin{equation}
c(T)(1+3x)\leq r_k^3\leq C(T)(1+3x).
\end{equation} 
We then control the norm of $x$-derivative of $\tilde{u}_k$ by
\begin{equation}\begin{aligned}
\|(1+3x)^{\frac{2}{3}}(\tilde{u}_k)_x\|_{L^\infty_tL^2_x}
\leq&\|(1+3x)^{\frac{2}{3}}(u_k)_x\phi_k\|_{L^\infty_tL^2_x}+\|(1+3x)^{\frac{2}{3}}(\phi_k)_xu_k\|_{L^\infty_tL^2_x}\\
\leq&C(T)\|r_k^2(u_k)_x\|_{L^\infty_tL^2_x(Q_{k,T})}+\|(1+3x)^{\frac{2}{3}}(\phi_k)_xu_k\|_{L^\infty_tL^2_x}\\
\leq& C(T),
\end{aligned}\end{equation}
where in the last step we use the inequality $$\left|(1+3x)^{\frac{2}{3}}(\phi_k)_x\right|\leq C\chi_{\left\{2^{-1}k\leq x\leq k\right\}}k^{-1}(1+3x)^{\frac{2}{3}}\leq C(1+3x)^{-\frac{1}{3}}. $$ 
Similarly, the $x$-derivative of $\tilde{\rho}_k$ have the control that
\begin{equation}\begin{aligned}
\|(1+3x)^{\frac{2}{3}}(\tilde{\rho}_k^{-1})_x\|_{L^\infty_tL^2_x}
\leq&\|(1+3x)^{\frac{2}{3}}(\rho_k^{-1})_x\phi_k\|_{L^\infty_tL^2_x}+\|(1+3x)^{\frac{2}{3}}(\phi_k)_x(\rho_k^{-1}-1)\|_{L^\infty_tL^2_x}\\
\leq&C(T)\|r_k^2(\rho_k^{-1})_x\|_{L^\infty_tL^2_x(Q_{k,T})}+\|(1+3x)^{\frac{2}{3}}(\phi_k)_x(\rho_k^{-1}-1)\|_{L^\infty_tL^2_x}
\\
\leq &C(T).
\end{aligned}\end{equation}
The mixed derivative of $u$ can be bounded easily by using (3.7) that
\begin{equation}\begin{aligned}
\|(1+3x)^{\frac{2}{3}}(\tilde{u}_k)_{xt}\|_{L^2_tL^2_x}
\leq&\|(1+3x)^{\frac{2}{3}}(u_k)_{xt}\phi_k\|_{L^2_tL^2_x}+\|(1+3x)^{\frac{2}{3}}(\phi_k)_x(u_k)_t\|_{L^2_tL^2_x}\\
\leq& C(T)\|r_k^2(u_k)_{xt}\|_{L^2_tL^2_x(Q_{k,T})}+C(T)\|\frac{(u_k)_t}{r_k}\|_{L^2_tL^2_x(Q_{k,T})}\\
\leq &C(T).
\end{aligned}\end{equation}
To control the mixed derivative of $\rho_k$, first note that (2.24) and the inequality 
$$\begin{aligned}
\|u_k^2r\|_{L^\infty_x(0,k)}\leq&\int_0^k\rho_k(r_k^2 (u_k)_x)^2dx+2\int_0^k\rho_k^{-1}\frac{u_k^2}{r_k^2}dx\\
=&\int_0^k\rho_k(r_k^2u_k)_x^2dx+2(r_k u_k^2)|_{x=0}\\
\leq& C(T)\left(\|\partial_t\rho_k^{-1}\|_{L^2_x(0,k)}^2+\left(\frac{dR_k}{dt}\right)^2\right)
\end{aligned}$$
imply that $\|(\log\rho_k)_t\|_{L^\infty_tL^\infty_x(Q_{k,T})}\leq C(T)$. Therefore
\begin{equation}\begin{aligned}
&\|(1+3x)^{\frac{2}{3}}(\log\tilde{\rho}_k)_{xt}\|_{L^\infty_tL^2_x}\\
\leq &C(T)\|r_k^2\left(\tilde{\rho}_k\rho_k^{-1}(\log\rho_k)_t\phi_k\right)_x\|_{L^\infty_tL^2_x}\\\leq& C(T)\|r_k^2(\tilde{\rho}_k)_x\rho_k^{-1}(\log\rho_k)_t\phi_k\|_{L^\infty_tL^2_x}+C(T)\|r_k^2\tilde{\rho}_k(\rho_k^{-1})_x(\log\rho_k)_t\phi_k\|_{L^\infty_tL^2_x}\\
&+C(T)\|r_k^2\tilde{\rho}_k\rho_k^{-1}(\log\rho_k)_{xt}\phi_k\|_{L^\infty_tL^2_x}+C(T)\|r_k^2\tilde{\rho}_k\rho_k^{-1}(\log\rho_k)_t(\phi_k)_x\|_{L^\infty_tL^2_x}\\
\leq& C(T)\|(\log\rho_k)_t\|_{L^\infty_tL^\infty_x(Q_{k,T})}\left(\|r_k^2(\rho_k^{-1})_x\|_{L^\infty_tL^2_x(Q_{k,T})}+\|r_k^2(\tilde{\rho}_k^{-1})_x\|_{L^\infty_tL^2_x}\right)\\
&+C(T)\|r_k^2(\log\rho_k)_{xt}\|_{L^\infty_t L^2_x(Q_{k,T})}+C(T)\|r_k^2(\phi_k)_x\partial_t\rho_k^{-1}\|_{L^\infty_tL^2_x}\\
\leq &C(T).
\end{aligned}\end{equation}
Summarizing the estimates (3.4-3.11), one concludes that
\begin{equation}\begin{aligned}
&\|\tilde{u}_k,\;(\tilde{\rho}^{-1}_k-1),\;\partial_t\tilde{u}_k,\;\partial_t\tilde{\rho}^{-1},\;(1+3x)^{\frac{2}{3}}\partial_x\tilde{u}_k,\;(1+3x)^{\frac{2}{3}}\partial_x\tilde{\rho}^{-1}_k,\;(1+3x)^{\frac{2}{3}}(\log\tilde{\rho})_{xt}\|_{L^\infty_tL^2_x}^2\\
&+\int_0^T\|(1+3x)^{\frac{2}{3}}(\tilde{u}_k)_{xt}\|_{L^2}^2d\tau\leq C(T).
\end{aligned}\end{equation}
Hence there exist functions $(u,\;\rho^{-1})$ and a subsequence of $(\tilde{u}_k,\;\tilde{\rho}^{-1}_k)$ (still denoted by $(\tilde{u}_k,\;\tilde{\rho}^{-1}_k)$) such that as $k\rightarrow+\infty$,
\begin{equation}\begin{aligned}
&\left(\tilde{u}_k,\;(\tilde{\rho}^{-1}-1),\;\partial_t\tilde{u}_k,\;\partial_t\tilde{\rho}^{-1},\;(1+3x)^{\frac{2}{3}}\partial_x\tilde{u}_k,\;(1+3x)^{\frac{2}{3}}\partial_x\tilde{\rho}^{-1}_k,\;(1+3x)^{\frac{2}{3}}(\log\tilde{\rho})_{xt}\right)\\
\rightharpoonup &\left(u,\;(\rho^{-1}-1),\;\partial_tu,\;\partial_t\rho^{-1},\;(1+3x)^{\frac{2}{3}}\partial_xu,\;(1+3x)^{\frac{2}{3}}\partial_x\rho^{-1},\;(1+3x)^{\frac{2}{3}}(\log\rho)_{xt}\right)\\
&\text{in the weak-$\ast$ sense of $L^\infty([0,T],L^2)$, and that}  \\
&(1+3x)^{\frac{2}{3}}(u_k)_{xt}\rightharpoonup (1+3x)^{\frac{2}{3}}u_{xt}\text{ in the weak sense of $L^2\left([0,T],L^2\right)$,}
\end{aligned}\end{equation}
with $(u,\;\rho)$ satisfying 
\begin{equation}\begin{aligned}
&\|u,\;(\rho^{-1}-1),\;\partial_tu,\;\partial_t\rho^{-1},\;(1+3x)^{\frac{2}{3}}\partial_xu,\;(1+3x)^{\frac{2}{3}}\partial_x\rho^{-1}_k,\;(1+3x)^{\frac{2}{3}}(\log\rho)_{xt}\|_{L^\infty_tL^2_x}^2\\
&+\int_0^T\|(1+3x)^{\frac{2}{3}}u_{xt}\|_{L^2}^2d\tau\leq C(T).
\end{aligned}\end{equation}
Moreover, for any $\psi\in C_c^\infty(Q_T)$ with $\psi\geq0$, since
$\lim_{k\rightarrow+\infty}\int_{Q_T}\tilde{\rho}_k\psi dxdt=\int_{Q_T}\rho\psi dxdt,$ and
$c(T)\int_{Q_T}\psi dxdt\leq\lim_{k\rightarrow+\infty}\int_{Q_T}\tilde{\rho}_k\psi dxdt\leq C(T)\int_{Q_T}\psi dxdt,$ 
it holds that \begin{equation}
c(T)\leq\rho\leq C(T), \text{ on } Q_T.
\end{equation}
\indent Define $r(x,t)=r_0(x)+\int_0^tud\tau,\;R(t)=r(0,t).$
 We now check that $(u,\;\rho,\;R)$ is a generalized solution to (1.14-1.19). First, (1.16) holds immediately by the construction of $R$. To check the initial value (1.19) that $(u,\;\rho,\;R)|_{t=0}=(u_0,\;\rho_0,\;R_0)$, let $\varphi\in C_c^\infty[0,+\infty)$ with $\text{supp }\varphi\subset[0,N]$. Then for $k>2N$, 
$$\begin{aligned}
\left(u(0)-u_0,\;\varphi\right)_{L^2}
=&\left(u(0)-u_{k,0},\;\varphi\right)_{L^2}\\
=&\frac{1}{T}\int_0^T\left(u(t)-u_k(t),\;\varphi\right)_{L^2}dt+\frac{1}{T}\int_0^T(t-T)\left(\partial_tu-\partial_tu_k,\;\varphi\right)_{L^2}dt\\
=&\frac{1}{T}\int_0^T\left(u(t)-\tilde{u}_k(t),\;\varphi\right)_{L^2}dt+\frac{1}{T}\int_0^T(t-T)\left(\partial_tu-\partial_t\tilde{u}_k,\;\varphi\right)_{L^2}dt\\
\rightarrow & 0,\text{ as }k\rightarrow+\infty.
\end{aligned}$$
Hence $u(t=0)=u_0$, and similarly $\rho(t=0)=\rho_0$. For any $N>0$, in view of Rellich's selection theorem, there exists a subsequence of $(\tilde{u}_k,\;\tilde{\rho}^{-1}_k)$, still denoted by $(\tilde{u}_k,\;\tilde{\rho}^{-1}_k)$, such that
\begin{equation}
(\tilde{u}_k,\;\tilde{\rho}^{-1}_k)\rightarrow (u,\rho^{-1}),\;\text{strongly in }L^2\left((0,N)\times(0,T)\right).
\end{equation}
Then by (3.3), $(u_k,\;\rho_k)$ also converges strongly to $(u,\;\rho)$ in  $L^2\left((0,N)\times(0,T)\right)$, and thus
\begin{equation}
r_k\rightarrow r,\text{ strongly in }C\left([0,T],L^2(0,N)\right).
\end{equation}
Therefore, in view of (3.3)(3.13), for any $t\in[0,T]$, 
\begin{equation}\begin{aligned}
R_k(t)-R(t)
=&\int_0^t(u_k-u)|_{x=0}d\tau\\
=&\frac{1}{N}\int_0^t\int_0^N(u_k-u)dxd\tau+\frac{1}{N}\int_0^t\int_0^N(x-N)(\partial_xu_k-\partial_xu)dxd\tau\\
\rightarrow& 0,\;\text{as }k\rightarrow+\infty
\end{aligned}\end{equation}
In particular, $R(0)=\lim_{k\rightarrow+\infty}R_k(0)=R_0$, which verifies (1.19).\\ 
\indent To check (1.14), let $\psi\in C_c^\infty(Q_T)$ with $\text{supp}\psi\subset (0,N)\times(0,T)$. 
\iffalse(3.3)(3.13)(3.16)(3.17) yield that
\begin{equation}\begin{aligned}
&\int_{Q_T}\partial_t\rho^{-1}\psi dxdt=\lim_{k\rightarrow+\infty}
\int_{Q_T}\partial_t\rho^{-1}_k\psi dxdt=\lim_{k\rightarrow+\infty}\int_{Q_T}(r^2_ku_k)_x\psi dxdt\\
=&-\lim_{k\rightarrow+\infty}\int_{Q_T}r_k^2u_k\psi_xdxdt=-\lim_{k\rightarrow+\infty}\int_{Q_T}r^2u\psi_xdxdt=\int_{Q_T}(r^2u)_x\psi dxdt.
\end{aligned}\end{equation}
\fi
First by the weak convergence (3.13) and (3.3), one has
$$\int_{Q_T}\partial_t\rho^{-1}\psi dxdt=\lim_{k\rightarrow+\infty}
\int_{Q_T}\partial_t\tilde{\rho}^{-1}_k\psi dxdt=\lim_{k\rightarrow+\infty}
\int_{Q_T}\partial_t\rho^{-1}_k\psi dxdt.$$
Then using equation (2.1) and integrating by parts, it holds
$$\lim_{k\rightarrow+\infty}
\int_{Q_T}\partial_t\rho^{-1}_k\psi dxdt=\lim_{k\rightarrow+\infty}\int_{Q_T}(r^2_ku_k)_x\psi dxdt=-\lim_{k\rightarrow+\infty}\int_{Q_T}r_k^2u_k\psi_xdxdt$$
Next, using the strong convergence (3.16)(3.17) and integrating by parts again, it follows 
$$
\int_{Q_T}\partial_t\rho^{-1}\psi dxdt=-\lim_{k\rightarrow+\infty}\int_{Q_T}r_k^2u_k\psi_xt=-\lim_{k\rightarrow+\infty}\int_{Q_T}r^2u\psi_xdxdt=\int_{Q_T}(r^2u)_x\psi dxdt
$$
Hence $\partial_t\rho^{-1}=(r^2u)_x$, which is exactly (1.14). Moreover, it follows from 
$\frac{1}{3}\partial_t\partial_xr^3=\partial_x(r^2u)=\partial_t\rho^{-1}$ that
\begin{equation}
\partial_x r^3=3\rho^{-1}-3\rho^{-1}_0+\partial_xr_0^3=3\rho^{-1},\;\partial_xr=\rho^{-1}r^{-2},
\end{equation}
$$
r^3(x,t)=R^3(t)+3\int_0^x\rho^{-1}(y,t)dy,
$$
which together with the construction of $r$ verify (1.18).
Since $c(T)\leq\rho\leq C(T)$, it also follows that
\begin{equation}
c(T)\leq(1+3x)^{-1}r^3\leq C(T).
\end{equation}
\indent To check (1.15), write the inner product of the viscous term with an arbitrary function $\psi$ as
$$\begin{aligned}
\int_{Q_T}(\rho_k(r_k^2u_k)_x)_xr_k^2\psi dxdt
=&-\int_{Q_T}\rho_k(r_k^2u_k)_x(r_k^2\psi)dxdt\\
=&\int_{Q_T}(\rho-\rho_k)(r_k^2u_k)_x(r_k^2\psi)_xdxdt-\int_{Q_T}\rho(r_k^2u_k)_x(r^2\psi)_xdxdt\\
&-\int_{Q_T}\rho(r_k^2u_k)_x[(r_k^2-r^2)\psi]_xdxdt.
\end{aligned}$$
By the strong convergence of $\tilde{\rho}^{-1}_k$ (3.16), the bounds of $\rho$ (3.15), $\rho_k$ (3.5), $r_k$ (3.7), $\tilde{u}_k$ (3.12) in view of (3.3) and the compact support of $\psi$, the first term on the right-hand side tends to 0 as $k\rightarrow+\infty$. Similarly, the third term vanishes as $k\rightarrow+\infty$ by (3.3)(3.7)(3.12)(3.15)(3.16)(3.17) and the bounds of $r$ (3.20), while the second term tends to $-\int_{Q_T}\rho(r^2u)_x(r^2\psi)_xdxdt=\int_{Q_T}(\rho(r^2u)_x)_xr^2\psi dxdt$ by (3.13)(3.15)(3.19)(3.20) and the equation (1.14). Hence we find that 
\begin{equation}
\lim_{k\rightarrow+\infty}\int_{Q_T}(\rho_k(r_k^2u_k)_x)_xr_k^2\psi dxdt=\int_{Q_T}(\rho(r^2u)_x)_xr^2\psi dxdt.
\end{equation}
For the pressure term, write 
$$\begin{aligned}
\int_{Q_T}(\rho_k^\gamma)_xr_k^2\psi dxdt=&-\int_{Q_T}(\rho_k^\gamma-1)(r_k^2\psi)_xdxdt\\
=&\int_{Q_T}(\rho^\gamma-\rho_k^\gamma)(r_k^2\psi)_xdxdt-\int_{Q_T}(\rho^\gamma-1)(r^2\psi)_xdxdt\\
&-\int_{Q_T}(\rho^\gamma-1)((r_k^2-r^2)\psi)_xdxdt.
\end{aligned}$$
(3.3)(3.5)(3.7)(3.15)(3.16) imply that the first term tends to 0, and (3.3)(3.7)(3.14)(3.15)(3.16) (3.17)(3.20) imply that the third term tends to 0. Hence 
\begin{equation}
\lim_{k\rightarrow+\infty}\int_{Q_T}(\rho_k^\gamma)_xr_k^2\psi dxdt=\int_{Q_T}(\rho^\gamma)_xr^2\psi dxdt,
\end{equation}
(3.3)(3.13)(3.21)(3.22) then imply that
$$\begin{aligned}
0=&\lim_{k\rightarrow+\infty}\int_{Q_T}\left[\partial_tu_k+\frac{Ca}{2}(\rho_k^\gamma)_xr_k^2-\mu r_k^2(\rho_k(r_k^2u_k)_x)_x\right]\psi dxdt
\\&=\int_{Q_T}\left[\partial_tu+\frac{Ca}{2}(\rho^\gamma)_xr^2-\mu r^2(\rho(r^2u)_x)_x\right]\psi dxdt,
\end{aligned}$$
which verifies equation (1.15).  \\
\indent At last, to check (1.17), take $\psi\in C_c^\infty(Q_T)$ such that $\text{supp}\psi\subset[0,N)\times(0,T)$. Then by (1.14),
$$\begin{aligned}
&-\int_0^T\left.\left(\frac{Ca}{2}\rho^\gamma-\mu\rho r^2u_x\right)\right|_{x=0}\psi|_{x=0}dt\\
=&\int_{Q_T}\left(\frac{Ca}{2}\rho^\gamma-\mu\rho r^2u_x\right)\psi_xdxdt+\int_{Q_T}\left(\frac{Ca}{2}\rho^\gamma-\mu\rho r^2u_x\right)_x\psi dxdt\\
=&\int_{Q_T}\left(\frac{Ca}{2}\rho^\gamma-\mu\rho r^2u_x\right)\psi_xdxdt+\int_{Q_T}\left(\frac{Ca}{2}(\rho^\gamma)_x+\mu(\log\rho)_{xt}+2\mu\frac{u_x}{r}+2\mu\rho^{-1}\frac{u}{r^4}\right)\psi dxdt,
\end{aligned}$$
and the same equation holds with $(u,\;\rho,\;r)$ replaced by $(u_k,\;\rho_k,\;r_k)$. By (3.3)(3.5)(3.7)(3.13)(3.15) (3.16)(3.17)(3.20) and the convergence of $R_k$ (3.18), (1.17) is verified by
$$\begin{aligned}
&\int_0^T\left.\left(\frac{Ca}{2}\rho^\gamma-\mu\rho r^2u_x\right)\right|_{x=0}\psi|_{x=0}dt\\
=&\lim_{k\rightarrow+\infty}\int_0^T\left.\left(\frac{Ca}{2}\rho_k^\gamma-\mu\rho_k r_k^2(u_k)_x\right)\right|_{x=0}\psi|_{x=0}dt\\
=&\lim_{k\rightarrow+\infty}\int_0^T\left(\left(\frac{Ca}{2}+\frac{2}{We}\right)R_k^{-3\gamma_0}-\frac{2}{We}R_k^{-1}\right)\psi|_{x=0}dt\\
=&\int_0^T\left(\left(\frac{Ca}{2}+\frac{2}{We}\right)R^{-3\gamma_0}-\frac{2}{We}R^{-1}\right)\psi|_{x=0}dt.
\end{aligned}$$
Hence $(u,\;\rho,\;R)$ is a generalized solution to (1.14-1.19) on $[0,T]$. Since $T>0$ is chosen arbitrarily, we conclude that $(u,\;\rho,\;R)$ is in fact a global generalized solution.
\subsection{Uniqueness}
Let $(u_1,\;\rho_1,\;R_1)$, $(u_2,\;\rho_2,\;R_2)$ be two global generalized solutions to (1.14-1.19), namely, for $i=1,2$,
\begin{numcases}
{} \partial_t\rho_i+\rho_i^2\partial_x(r_i^2u_i)=0, &$x>0,\;t>0,$\\
\partial_t u_i+\frac{Ca}{2}r_i^2\partial_x(\rho_i^\gamma) =\mu r_i^2\partial_x\left(\rho_i\partial_x(r_i^2u_i)\right), &$x>0,\;t>0,$\\
\frac{dR_i}{dt}=u_i|_{x=0}, &$t>0,$\\
(\frac{Ca}{2}\rho_i^\gamma-\mu\rho_i r_i^2\partial_x u_i)|_{x=0}=\left(\frac{Ca}{2}+\frac{2}{We}\right)R_i^{-3\gamma_0}-\frac{2}{We}R_i^{-1}, &$t>0,$\\
r_i=\left(R_i(t)^3+3\int_0^x\rho_i^{-1}(y,t)dy\right)^{\frac{1}{3}}=r_i(x,0)+\int_0^t u_i(y,\tau)d\tau, &$x>0,\;t>0,$\\
(u_i,\;\rho_i,\;R_i)|_{t=0}=(u_0,\;\rho_0,\;R_0), &$x>0$,
\end{numcases} 
and the control (3.14)(3.15)(3.20) hold.
To prove the uniqueness, it suffices to show $(u_1,\;\rho_1,\;R_1)=(u_2,\;\rho_2,\;R_2)$ on $[0,T]$ for arbitrary $T>0$.
To begin with, subtracting (3.24) with $i=1,2$, and multiplying the resulted equation by $(u_1-u_2)$ yield the equation 
\begin{equation}\begin{aligned}
0=&\frac{1}{2}\frac{d}{dt}(u_1-u_2)^2+\frac{Ca}{2}\left[(\rho_1^\gamma)_xr_1^2-(\rho_2^\gamma)_xr_2^2\right](u_1-u_2)\\
&-\mu\left[(\rho_1(r_1^2u_1)_x)_xr_1^2-(\rho_2(r_2^2u_2)_x)_xr_2^2\right](u_1-u_2)\\
=&\frac{1}{2}\frac{d}{dt}(u_1-u_2)^2+\frac{Ca}{2}\left[(\rho_1^\gamma r_1^2-\rho_2^\gamma r_2^2)(u_1-u_2)\right]_x-\frac{Ca}{2}(\rho_1^\gamma r_1^2-\rho_2^\gamma r_2^2)(u_1-u_2)_x\\
&-\frac{Ca}{2}(\rho_1^2(r_1^2)_x-\rho_2^2(r_2^2)_x)(u_1-u_2)-\mu\left[(\rho_1(r_1^4(u_1)_x-\rho_2(r_2^4(u_2)_x)(u_1-u_2)\right]_x\\
&+\mu\rho_2 r_2^4(u_1-u_2)_x^2+2\mu\rho_2^{-1}r_2^{-2}(u_1-u_2)^2\\
&+\mu(\rho_1 r_1^4-\rho_2 r_2^4)(u_1)_x(u_1-u_2)_x+2\mu(\rho_1^{-1}r_1^{-2}-\rho_2^{-1}r_2^{-2})u_1(u_1-u_2).
\end{aligned}\end{equation}
Using (3.26), integrating (3.29) on $(0,+\infty)$ yields that
\begin{equation}\begin{aligned}
&\frac{1}{2}\frac{d}{dt}\int_0^\infty(u_1-u_2)^2dx-\frac{Ca}{2}\int_0^\infty(\rho_1^\gamma(r_1^2)_x-\rho_2^\gamma(r_2^2)_x)(u_1-u_2)dx\\
&-\frac{Ca}{2}\int_0^\infty(\rho_1^\gamma r_1^2-\rho_2^\gamma r_2^2)(u_1-u_2)_xdx+\mu\int_0^\infty\rho_2r_2^4(u_1-u_2)_x^2dx+2\mu\int_0^\infty\rho_2^{-1}r_2^{-2}(u_1-u_2)^2dx\\
&+\mu\int_0^\infty(\rho_1 r_1^4-\rho_2r_2^4)(u_1)_x(u_1-u_2)_xdx+2\mu\int_0^\infty(\rho_1^{-1}r_1^{-2}-\rho_2^{-1}r_2^{-2})u_1(u_1-u_2)dx\\
&+\left[\frac{2}{We}(R_1-R_2)-\left(\frac{Ca}{2}+\frac{2}{We}\right)(R_1^{-3\gamma_0+2}-R_2^{-3\gamma_0+2})\right]\left(\frac{dR_1}{dt}-\frac{dR_2}{dt}\right)=0.
\end{aligned}\end{equation}
Since $$(R_2-R_1)^{-1}\left(R_2^{-3\gamma_0+2}-R_1^{-3\gamma_0+2}\right)=\int_0^1(2-3\gamma_0)\left(R_1+\lambda(R_2-R_1)\right)^{1-3\gamma_0}d\lambda,$$ 
the difference of $R_i$ can be bounded by
\begin{equation}\begin{aligned}
&\left[\frac{2}{We}(R_1-R_2)-\left(\frac{Ca}{2}+\frac{2}{We}\right)(R_1^{-3\gamma_0+2}-R_2^{-3\gamma_0+2})\right]\left(\frac{dR_1}{dt}-\frac{dR_2}{dt}\right)\\
=&\frac{d}{dt}\left[\left(\frac{1}{2}\left(\frac{Ca}{2}+\frac{2}{We}\right)\frac{R_2^{-3\gamma_0+2}-R_1^{-3\gamma_0+2}}{R_1-R_2}+\frac{1}{We}\right)(R_1-R_2)^2\right]\\
&+\frac{1}{2}\left(\frac{Ca}{2}+\frac{2}{We}\right)(R_1-R_2)^2\frac{d}{dt}\int_0^1(2-3\gamma_0)\left(R_1+\lambda(R_2-R_1)\right)^{1-3\gamma_0}d\lambda\\
=&\frac{d}{dt}\left[\left(\frac{1}{2}\left(\frac{Ca}{2}+\frac{2}{We}\right)\frac{R_2^{-3\gamma_0+2}-R_1^{-3\gamma_0+2}}{R_1-R_2}+\frac{1}{We}\right)(R_1-R_2)^2\right]\\
&+\frac{1}{2}\left(\frac{Ca}{2}+\frac{2}{We}\right)(R_1-R_2)^2(2-3\gamma_0)(1-3\gamma_0)\int_0^1\left(R_1+\lambda(R_2-R_1)\right)^{-3\gamma_0}(u_1+\lambda(u_2-u_1))d\lambda\\
\geq&\frac{d}{dt}\left[\left(\frac{1}{2}\left(\frac{Ca}{2}+\frac{2}{We}\right)\frac{R_2^{-3\gamma_0+2}-R_1^{-3\gamma_0+2}}{R_1-R_2}+\frac{1}{We}\right)(R_1-R_2)^2\right]-C(T)(R_1-R_2)^2,
\end{aligned}\end{equation}
where in the last step we use that $\|u_i\|_{L^\infty}\leq C(T)\left(\|u_i\|_{L^2}+\|r_i^2(u_i)_x\|_{L^2}\right)\leq C(T)$ in view of (3.14)(3.15)(3.20) and that $c\leq R_i\leq C$ for $i=1,2$. 
Then using Cauchy-Schwarz inequality and (3.14)(3.15)(3.20), the two terms with coefficient $\frac{Ca}{2}$ in (3.30) have the control that
\begin{equation}\begin{aligned}
&-\frac{Ca}{2}\int_0^\infty(\rho_1^\gamma(r_1^2)_x-\rho_2^\gamma(r_2^2)_x)(u_1-u_2)dx-\frac{Ca}{2}\int_0^\infty(\rho_1^\gamma r_1^2-\rho_2^\gamma r_2^2)(u_1-u_2)_xdx\\
&+\mu\int_0^\infty(\rho_1 r_1^4-\rho_2r_2^4)(u_1)_x(u_1-u_2)_xdx+2\mu\int_0^\infty(\rho_1^{-1}r_1^{-2}-\rho_2^{-1}r_2^{-2})u_1(u_1-u_2)dx\\
\geq&-\frac{\mu}{2}\int_0^\infty\rho_2r_2^4(u_1-u_2)_x^2dx-\mu\int_0^\infty\rho_2^{-1}r_2^{-2}(u_1-u_2)^2dx\\
&-C\int_0^\infty(\rho_1^\gamma r_1^2-\rho_2^\gamma r_2^2)^2\rho_2^{-1}r_2^{-4}dx-C\int_0^\infty(\rho_1^\gamma(r_1^2)_x-\rho_2^\gamma(r_2^2)_x)^2\rho_2r_2^2dx\\
&-C\int_0^\infty(\rho_1r_1^4-\rho_2r_2^4)^2(u_1)_x^2\rho_2^{-1}r_2^{-4}dx-C\int_0^\infty(\rho_1^{-1}r_1^{-2}-\rho_2^{-1}r_2^{-2})^2\rho_2r_2^2u_1^2dx.\\
\geq&-\frac{\mu}{2}\int_0^\infty\rho_2r_2^4(u_1-u_2)_x^2dx-\mu\int_0^\infty\rho_2^{-1}r_2^{-2}(u_1-u_2)^2dx\\
&-C(T)\int_0^\infty(\rho_1^{-1}-\rho_2^{-1})^2dx-C(T)\int_0^\infty(1-\frac{r_1}{r_2})^2dx-C(T)\int_0^\infty(1-\frac{r_2}{r_1})^2dx,
\end{aligned}\end{equation}
where we also used that \begin{equation}
\|u_1\|_{L^\infty}^2\leq C(T)\left(\|u_1\|_{L^2}^2+\|r_1^2(u_1)_x\|_{L^2}^2\right)\leq C(T),
\end{equation}\begin{equation}
\|\rho_1(r_1^2u_1)_x\|_{L^\infty}\leq C(T)(1+\|(u_1)_t\|_{L^2})\leq C(T).
\end{equation} 
(3.34) is verified by dividing (3.24) with $r_1^2$ and integrating on $[x,+\infty)$, namely
$$\mu\rho_1(r_1^2u_1)_x(x,t)=-\int_x^\infty\frac{(u_1)_t}{r_1^2}dy+\frac{Ca}{2}(\rho_1^\gamma(x,t)-1).$$
The third line in (3.30) can be absorbed by  using Cauchy-Schwarz inequality and (3.33)(3.34),  therefore  using (3.31)(3.32) in (3.30) yields 
\begin{equation}\begin{aligned}
&\frac{1}{2}\frac{d}{dt}\int_0^\infty(u_1-u_2)^2dx+\frac{d}{dt}\left[\left(\frac{1}{2}\left(\frac{Ca}{2}+\frac{2}{We}\right)\frac{R_2^{-3\gamma_0+2}-R_1^{-3\gamma_0+2}}{R_1-R_2}+\frac{1}{We}\right)(R_1-R_2)^2\right]\\
&+\frac{\mu}{2}\int_0^\infty\rho_2r_2^4(u_1-u_2)_x^2dx+\mu\int_0^\infty\rho_2^{-1}r_2^{-2}(u_1-u_2)^2dx\\
\leq&C(T)\left[\int_0^\infty(\rho_1^{-1}-\rho_2^{-1})^2dx+\int_0^\infty(1-\frac{r_1}{r_2})^2dx+\int_0^\infty(1-\frac{r_2}{r_1})^2dx+(R_1-R_2)^2\right].
\end{aligned}\end{equation} 
To close (3.35), it remains to control $\|\rho_1^{-1}-\rho_2^{-1}\|_{L^2}$ and $\|1-r_i^{-2}r_j^2\|_{L^2}$. In fact, for the difference of $\rho_i^{-1}$, we have the estimate that
\begin{equation}\begin{aligned}
&\frac{d}{dt}\int_0^\infty(\rho_1^{-1}-\rho_2^{-2})^2dx=2\int_0^\infty(\rho_1^{-1}-\rho_2^{-2})\left((r_1^2u_1)_x-(r_2^2u_2)_x\right)dx\\
\leq&\int_0^\infty(\rho_1^{-1}-\rho_2^{-1})^2dx+2\int_0^\infty\left(r_1^2(u_1)_x-r_2^2(u_2)_x\right)^2dx+2\int_0^\infty\left((r_1^2)_xu_1-(r_2^2)_xu_2\right)^2dx\\
\leq&\int_0^\infty(\rho_1^{-1}-\rho_2^{-1})^2dx+4\int_0^\infty(r_1^2(u_1)_x)^2\left(1-\frac{r_2}{r_1}\right)^2dx+4\int_0^\infty r_2^4(u_1-u_2)_x^2dx\\
&+16\int_0^\infty u_1^2(\rho_1^{-1}r_1^{-1}-\rho_2^{-1}r_2^{-1})^2dx+16\int_0^\infty\rho_2^{-2}r_2^{-2}(u_1-u_2)^2dx\\
\leq&C(T)\left[\int_0^\infty(\rho_1^{-1}-\rho_2^{-1})^2dx+\int_0^\infty\left(1-\frac{r_2}{r_1}\right)^2dx+\int_0^\infty(u_1-u_2)^2dx+\int_0^\infty\rho_2r_2^4(u_1-u_2)_x^2dx\right].
\end{aligned}\end{equation}
Using $\partial_t r_i=u_i$ for $i=1,2$, $\|1-r_i^{-2}r_j^2\|_{L^2}$ can be bounded easily:
\begin{equation}\begin{aligned}
&\frac{d}{dt}\int_0^\infty\left(1-\frac{r_2}{r_1}\right)^2dx\\
=&2\int_0^\infty\left(\frac{r_2}{r_1}-1\right)\left(\frac{r_1u_2-r_2u_1}{r_1^2}\right)dx\\
=&-2\int_0^\infty\left(1-\frac{r_2}{r_1}\right)^2\frac{u_2}{r_1}dx+2\int_0^\infty\frac{r_2}{r_1^2}\left(\frac{r_2}{r_1}-1\right)(u_2-u_1)dx\\
\leq&C(T)\int_0^\infty\left(1-\frac{r_2}{r_1}\right)^2dx+C(T)\int_0^\infty(u_1-u_2)^2dx,
\end{aligned}\end{equation}
and in the same way,
\begin{equation}\begin{aligned}
\frac{d}{dt}\int_0^\infty\left(1-\frac{r_1}{r_2}\right)^2dx\leq C(T)\int_0^\infty\left(1-\frac{r_1}{r_2}\right)^2dx+C(T)\int_0^\infty(u_1-u_2)^2dx.
\end{aligned}\end{equation}
To cancel the bad term $\int_0^\infty\rho_2 r_2^4(u_1-u_2)_x^2dx$ in (3.36), choose a small enough $\epsilon(T)>0$, we conclude by (3.35)(3.36)(3.37)(3.38) that
$$\begin{aligned}
&\frac{d}{dt}\int_0^\infty(u_1-u_2)^2dx+\frac{d}{dt}\left[\left(\frac{1}{2}\left(\frac{Ca}{2}+\frac{2}{We}\right)\frac{R_2^{-3\gamma_0+2}-R_1^{-3\gamma_0+2}}{R_1-R_2}+\frac{1}{We}\right)(R_1-R_2)^2\right]\\
&+\epsilon(T)\frac{d}{dt}\int_0^\infty(\rho_1^{-1}-\rho_2^{-2})^2dx+\frac{d}{dt}\int_0^\infty\left(1-\frac{r_2}{r_1}\right)^2dx+\frac{d}{dt}\int_0^\infty\left(1-\frac{r_1}{r_2}\right)^2dx\\
\leq& C(T)\left[\int_0^\infty(u_1-u_2)^2dx+\int_0^\infty(\rho_1^{-1}-\rho_2^{-1})^2dx+(R_1-R_2)^2\right]\\
&+C(T)\int_0^\infty\left((1-\frac{r_1}{r_2})^2+(1-\frac{r_2}{r_1})^2\right)dx.
\end{aligned}$$
Gronwall's inequality then shows that $(u_1,\;\rho_1,\;R_1)=(u_2,\;\rho_2,\;R_2)$.
\section{Uniform estimates and asymptotic stability}
In this section, uniform in time estimates are given for solutions with initial data closed to the equilibrium. Then Theorem 1.3 is proved by making full use of the dissipation terms in energy estimates. Let $(u,\;\rho,\;R)$ be the global generalized solution to (1.14)-(1.19) with $(u_0,\;\rho_0,\;R_0)$ satisfying the assumption (1.21). Let $(\tilde{u}_k,\;\tilde{\rho}_k,\;R_k)$ be the same as in Section 3. We first establish the same basic energy identity for $(u,\;\rho,\;R)$ as $(u_k,\;\rho_k,\;R_k)$ in Section 2.
\begin{lem}[Basic energy]
Let $P(R)$ be the same as in Lemma 2.2, and let $$E_0(t):=\frac{1}{2}\int_0^\infty u^2dx+\frac{Ca}{2}\frac{1}{\gamma-1}\int_0^\infty H(\rho)dx+P(R).$$ Then for any $t\in [0,T]$, 
\begin{equation}
E_0(t)+\mu\int_0^t\int_0^\infty\rho(r^2u_x)^2dxd\tau
+2\mu\int_0^t\int_0^\infty\rho^{-1}\frac{u^2}{r^2}dxd\tau=E_0(0).
\end{equation}
\end{lem}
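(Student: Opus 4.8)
The plan is to rerun, now on the whole half‑line $(0,+\infty)$, the computation behind Lemma 2.2, inserting the spatial cut‑off $\phi_k$ of Section 3 to render every integration by parts legitimate, and then to remove the cut‑off by monotone convergence. Throughout I use that, being the unique generalized solution, $(u,\rho,R)$ coincides with the one constructed in Section 3, so that the relations (3.19) (in particular $\partial_x r=\rho^{-1}r^{-2}$) and the bounds (3.20), $c(T)\le\rho\le C(T)$, hold, and that, by Definition 1.1 together with the interpolation inequality (3.33), the quantities $u$, $u/r$, $r^2u_x$, $\rho-1$ are bounded in $L^\infty([0,T];L^2)$ uniformly on $[0,T]$; in particular $\rho\,\partial_x(r^2u)=\rho r^2u_x+2u/r\in L^\infty([0,T];L^2)$, so the divergence‑form manipulations below are justified.

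First I would test (1.15) with $u$ and perform the algebra of the proof of Lemma 2.2 — using (1.14) to write $(\rho^\gamma-1)(r^2u)_x=-\tfrac1{\gamma-1}\partial_tH(\rho)$ and (3.19) to absorb the cross term of $\rho(r^2u)_x^2$ into a perfect $x$‑derivative — which produces the pointwise identity
\[\tfrac12\partial_t(u^2)+\tfrac{Ca}{2(\gamma-1)}\partial_tH(\rho)+\mu\rho(r^2u_x)^2+2\mu\rho^{-1}\tfrac{u^2}{r^2}+\tfrac{Ca}{2}\partial_x\big((\rho^\gamma-1)r^2u\big)-\mu\partial_x\big(\rho r^4uu_x\big)=0,\]
valid a.e.\ on $(0,+\infty)\times(0,T)$. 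Multiplying by $\phi_k$ and integrating over $(0,+\infty)\times(0,t)$: the two divergence terms, after integration by parts, contribute boundary values at $x=0$ which — since $\phi_k(0)=1$ — combine through (1.16)--(1.17) to $P(R(t))-P(R_0)$ exactly as in Lemma 2.2; there is no contribution at $x=+\infty$ because $\phi_k$ has compact support; and the only new terms are cut‑off errors $\int\phi_k'(\cdot)\,dx$ supported on $[k/2,k]$. One thus obtains
\begin{equation*}
\begin{aligned}
&\tfrac12\int_0^\infty u^2(t)\phi_k\,dx+\tfrac{Ca}{2(\gamma-1)}\int_0^\infty H(\rho(t))\phi_k\,dx+P(R(t))+\mu\int_0^t\!\!\int_0^\infty\rho(r^2u_x)^2\phi_k\,dx\,d\tau+2\mu\int_0^t\!\!\int_0^\infty\rho^{-1}\tfrac{u^2}{r^2}\phi_k\,dx\,d\tau\\
&\qquad=\tfrac12\int_0^\infty u_0^2\phi_k\,dx+\tfrac{Ca}{2(\gamma-1)}\int_0^\infty H(\rho_0)\phi_k\,dx+P(R_0)+\int_0^t G_k(\tau)\,d\tau,
\end{aligned}
\end{equation*}
with $G_k(\tau):=\tfrac{Ca}{2}\int_{k/2}^k(\rho^\gamma-1)r^2u\,\phi_k'\,dx-\mu\int_{k/2}^k\rho\,r^4uu_x\,\phi_k'\,dx$.

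Next I would make the cut‑off error disappear. On $\mathrm{supp}\,\phi_k'\subset[k/2,k]$ one has $|\phi_k'|\le Ck^{-1}$ and, by (3.20), $r^2\le C(T)(1+3x)^{2/3}\le C(T)k^{2/3}$, so that $r^2|\phi_k'|\le C(T)k^{-1/3}$. Hence, bounding $r^2|\phi_k'|$, $\rho$ and $|\rho^\gamma-1|\le C(T)|\rho-1|$ pointwise and applying Cauchy--Schwarz to the two remaining $L^2$ factors,
\[\Big|\int_{k/2}^k\rho\,r^4uu_x\,\phi_k'\,dx\Big|\le C(T)k^{-1/3}\|u\|_{L^2}\|r^2u_x\|_{L^2}\le C(T)k^{-1/3},\]
\[\Big|\int_{k/2}^k(\rho^\gamma-1)r^2u\,\phi_k'\,dx\Big|\le C(T)k^{-1/3}\|\rho-1\|_{L^2}\|u\|_{L^2}\le C(T)k^{-1/3},\]
uniformly in $\tau\in[0,T]$, so $\int_0^tG_k(\tau)\,d\tau\to0$ as $k\to\infty$. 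Finally, since $\phi_k\nearrow1$ pointwise and $u^2,\ H(\rho),\ \rho(r^2u_x)^2,\ \rho^{-1}u^2/r^2\ge0$ (recall $H\ge0$, with equality only at $\rho=1$), the monotone convergence theorem turns each $\phi_k$‑weighted integral above into its unweighted counterpart in $[0,+\infty]$; as the right‑hand side converges to the finite number $E_0(0)$, the left‑hand side is finite, and letting $k\to\infty$ yields precisely (4.1).

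I expect the only genuinely delicate point to be this last passage: showing that the cut‑off errors are negligible. It works only because $r$ grows sublinearly ($r^3\sim1+3x$), which gives the decisive gain $r^2|\phi_k'|=O(k^{-1/3})$, and because Definition 1.1 supplies $L^2$‑control of $u$, $\rho-1$ and $r^2u_x$ that is uniform on $[0,T]$. I would also remark that a purely one‑sided argument — simply passing to the limit in the bounded‑domain identity of Lemma 2.2 for $(u_k,\rho_k,R_k)$ via weak lower semicontinuity — delivers only ``$\le$'' in (4.1); it is the exactness of the monotone convergence above that upgrades this to the stated equality.
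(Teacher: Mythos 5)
Your argument is correct: the pointwise identity you derive is exactly the one underlying Lemma 2.2 (the cross term $4\mu r u u_x$ being absorbed so that the two divergence terms collapse to $-\mu\partial_x(\rho r^4 u u_x)+\frac{Ca}{2}\partial_x((\rho^\gamma-1)r^2u)$), the boundary value at $x=0$ combines through (1.16)--(1.17) into $\frac{d}{dt}P(R)$, and your cut-off error estimate is sound because $r^3\approx 1+3x$ gives $r^2|\phi_k'|\le C(T)k^{-1/3}$ while Definition 1.1 supplies the uniform $L^\infty_tL^2_x$ control of $u$, $r^2u_x$ and $\rho-1$. The route is, however, genuinely different from the paper's: there the integrals that are not absolutely convergent are rewritten as limits of perfect $x$-derivatives of the Section-3 approximants $\tilde u_k=u_k\phi_k$, and the weak-$*$ convergences (3.13), together with $\frac{dR_k}{dt}=-\int_0^\infty(\tilde u_k)_x\,dx$ and the convergence of $R_k$, are used to identify both the bulk terms and the boundary contribution $-\int_0^t\frac{d}{dt}P(R)\,d\tau$; no quantitative error estimate is needed because the limiting machinery of the construction is reused. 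Your version truncates the exact solution itself, pays for it with the explicit $O(k^{-1/3})$ commutator bound, and is therefore self-contained at the level of an arbitrary generalized solution (indeed the bounds you quote from (3.19)--(3.20) follow directly from (1.18) and the $L^\infty$/positivity requirements in Definition 1.1, so the appeal to uniqueness is not even needed), at the price of invoking the trace-sense boundary conditions directly rather than through the approximating sequence. Two cosmetic points: the paper's cut-off $\phi$ is not stated to be monotone, so strictly speaking replace "monotone convergence" by the combination of Fatou with $\phi_k\le 1$ (which gives the same equality since each integrand is nonnegative and the right-hand side stays bounded); and the interpolation (3.33) you cite is not actually needed for any of your estimates.
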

\begin{proof}
Using (1.14) and (1.15), a direct calculation gives that
$$\begin{aligned}
&\frac{1}{2}\int_0^\infty u^2dx+\frac{Ca}{2}\frac{1}{\gamma-1}\int_0^\infty H(\rho)dx+P(R)+\mu\int_0^t\int_0^\infty\rho(r^2u_x)^2dxd\tau
+2\mu\int_0^t\int_0^\infty\rho^{-1}\frac{u^2}{r^2}dxd\tau\\
=&\int_0^t\int_0^\infty\left\{uu_t+\frac{Ca}{2}(\rho^\gamma-1)\rho^{-2}\rho_t+\rho(r^2u_x)^2+\rho^{-1}\frac{u^2}{r^2}\right\}dxd\tau+\int_0^t\frac{d}{dt}P(R)d\tau+E_0(0)\\
=&-\frac{Ca}{2}\int_0^t\int_0^\infty\left[(\rho^\gamma-1)_xr^2u+(\rho^\gamma-1)(r^2u)_x\right]dxd\tau+\mu\int_0^t\int_0^\infty\left[(\rho(r^2u)_x)_xr^2u+\rho(r^2u)_x^2\right]dxd\tau\\
&-2\mu\int_0^t\int_0^\infty\left[2ruu_x+\rho^{-1}\frac{u^2}{r^2}\right]dxd\tau+\int_0^t\frac{d}{dt}P(R)d\tau+E_0(0)
\end{aligned}$$
Note that by (3.13), 
$$-\frac{Ca}{2}\int_0^t\int_0^\infty\left[(\rho^\gamma-1)_xr^2u+(\rho^\gamma-1)(r^2u)_x\right]dxd\tau=-\frac{Ca}{2}\lim_{k\rightarrow+\infty}\int_0^t\int_0^\infty\left[(\rho^\gamma-1)r^2\tilde{u}_k\right]_xdxd\tau,$$
$$\mu\int_0^t\int_0^\infty\left[(\rho(r^2u)_x)_xr^2u+\rho(r^2u)_x^2\right]dxd\tau=\mu\lim_{k\rightarrow+\infty}\int_0^t\int_0^\infty\left[(\rho(r^2u)_x)r^2\tilde{u}_k\right]_xdxd\tau,$$
$$-2\mu\int_0^t\int_0^\infty\left[2ruu_x+\rho^{-1}\frac{u^2}{r^2}\right]dxd\tau=-2\mu\lim_{k\rightarrow+\infty}\int_0^t\int_0^\infty\left[ru\tilde{u}_k\right]_xdxd\tau,$$
and that in view of the boundary conditions (1.16)(1.17) the sum of the right-hand sides of the above three equations is 
$$\lim_{k\rightarrow+\infty}\int_0^t\left[\left(\frac{Ca}{2}+\frac{2}{We}\right)R^{-3\gamma_0+2}-\frac{Ca}{2}R^2-\frac{2}{We}R\right]\frac{dR_k}{dt}d\tau,$$
which is exactly $-\int_0^t\frac{d}{dt}P(R)d\tau$ since $\frac{dR_k}{dt}=-\int_0^\infty(\tilde{u}_k)_xdx$ and $(1+3x)^{\frac{2}{3}}(\tilde{u}_k)_x\rightharpoonup (1+3x)^{\frac{2}{3}}u_x$ in the weak-$\ast$ sense of $L^\infty([0,T],L^2)$. Hence all the terms on the right-hand side of the energy identity are cancelled except $E_0(0)$.
\end{proof}
From Lemma 4.1, we see that $P(R)\leq\ E_0$, and the convexity of $P(R)$ yields for some positive constant $C$ that
 \begin{equation}
(R-1)^2\leq CE_0.
\end{equation}  
Similar to Lemma 2.3, a corollary is that 
\begin{equation}
\int_0^\infty\|u^2r\|_{L^\infty}dt\leq\mu^{-1}E_0(0).
\end{equation} As in Lemma 2.5, define for $t>0$ that
$$E_1(t):=\frac{1}{2}\int_0^\infty\left(u+\mu r^2(\log\rho)_x\right)^2dx+\frac{Ca}{2}\frac{1}{\gamma-1}\int_0^\infty H(\rho)dx+P(R).$$
Then a same calculation gives that 
\begin{lem}[Bresch-Desjardins entropy equation]
\begin{equation}
\frac{d}{dt}E_1(t)+\frac{Ca}{2}\frac{4\mu}{\gamma}\int_0^\infty\left(r^2(\rho^{\frac{\gamma}{2}})_x\right)^2dx=2\mu\int_0^\infty(\log\rho)_xru(u+\mu(\log\rho)_xr^2)dx+\mu(\rho r^2u_x)|_{x=0}R^2\frac{dR}{dt}.
\end{equation}
\end{lem}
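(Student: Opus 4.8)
The plan is to perform on the half-line $[0,\infty)$ the very computation that produced the pointwise identity (2.14) and its integrated form (2.15) in the proof of Lemma 2.5, and then to fold the free-boundary contribution into $\tfrac{d}{dt}P(R)$. As in the opening of that proof, rewrite the viscous term in (1.15) via (1.14) as $\mu r^2\bigl(\rho(r^2u)_x\bigr)_x=-\mu\partial_t\bigl(r^2(\log\rho)_x\bigr)+2\mu(\log\rho)_xru$, so that the generalized solution satisfies, in the $L^\infty_tL^2_x$ sense,
\[
\partial_t\bigl(u+\mu r^2(\log\rho)_x\bigr)+\tfrac{Ca}{2}(\rho^\gamma)_xr^2=2\mu(\log\rho)_xru .
\]
Multiplying by $u+\mu r^2(\log\rho)_x$ and using the two identities coming from (1.14), namely $\tfrac{Ca}{2}(\rho^\gamma)_xr^2\cdot\mu r^2(\log\rho)_x=\tfrac{Ca}{2}\tfrac{4\mu}{\gamma}\bigl(r^2(\rho^{\gamma/2})_x\bigr)^2$ and $(\rho^\gamma-1)(r^2u)_x=-\tfrac{1}{\gamma-1}\partial_tH(\rho)$, reproduces (2.14) verbatim, now valid a.e.\ on $[0,\infty)\times[0,T]$. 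After integrating in $x$, every bulk integral is finite by the regularity listed in Definition 1.1 together with the $L^\infty$ bounds on $\rho$, $r^{-1}$, $r$, so the only delicate term is the divergence $\tfrac{Ca}{2}\partial_x\bigl((1-\rho^\gamma)r^2u\bigr)$.

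I would treat that term exactly as in the proof of Lemma 4.1. Since $u$ need not decay as $x\to\infty$, replace it by the compactly supported approximation $\tilde u_k$ of Section 3, for which $\int_0^\infty\partial_x\bigl((1-\rho^\gamma)r^2\tilde u_k\bigr)dx=-(1-\rho^\gamma)r^2\tilde u_k|_{x=0}$ has no contribution at infinity, and then let $k\to\infty$, using the weak-$\ast$ convergence $(1+3x)^{2/3}(\tilde u_k)_x\rightharpoonup(1+3x)^{2/3}u_x$ in $L^\infty_tL^2_x$, the strong convergences $\tilde\rho_k^{-1}\to\rho^{-1}$ and $r_k\to r$ on compact sets, the uniform bounds (3.12)(3.14)(3.15)(3.20), and the relations $\tilde u_k|_{x=0}=u_k|_{x=0}=\tfrac{dR_k}{dt}=-\int_0^\infty(\tilde u_k)_x\,dx$ with $R_k\to R$. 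This sends the surviving boundary value to $\tfrac{Ca}{2}\bigl(\rho^\gamma|_{x=0}-1\bigr)R^2\tfrac{dR}{dt}$; as in Lemma 4.1 it is cleanest to run this argument first for the time-integrated form of the identity and then to differentiate in $t$. Integrating (2.14) in $x$ therefore gives the $[0,\infty)$ analogue of (2.15):
\begin{align*}
&\tfrac12\tfrac{d}{dt}\!\int_0^\infty\!\bigl(u+\mu r^2(\log\rho)_x\bigr)^2dx+\tfrac{Ca}{2(\gamma-1)}\tfrac{d}{dt}\!\int_0^\infty\!H(\rho)\,dx+\tfrac{Ca}{2}\tfrac{4\mu}{\gamma}\!\int_0^\infty\!\bigl(r^2(\rho^{\gamma/2})_x\bigr)^2dx\\
&\qquad=2\mu\!\int_0^\infty\!(\log\rho)_xru\bigl(u+\mu(\log\rho)_xr^2\bigr)dx+\tfrac{Ca}{2}\bigl(\rho^\gamma|_{x=0}-1\bigr)R^2\tfrac{dR}{dt}.
\end{align*}

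Finally, add $\tfrac{d}{dt}P(R)$ to both sides, turning the left-hand side into $\tfrac{d}{dt}E_1(t)+\tfrac{Ca}{2}\tfrac{4\mu}{\gamma}\int_0^\infty\bigl(r^2(\rho^{\gamma/2})_x\bigr)^2dx$, and simplify the combination $\tfrac{Ca}{2}\bigl(\rho^\gamma|_{x=0}-1\bigr)R^2\tfrac{dR}{dt}+\tfrac{d}{dt}P(R)$ using the dynamic boundary condition (1.17). Substituting $\tfrac{Ca}{2}\rho^\gamma|_{x=0}=\mu(\rho r^2u_x)|_{x=0}+\bigl(\tfrac{Ca}{2}+\tfrac{2}{We}\bigr)R^{-3\gamma_0}-\tfrac{2}{We}R^{-1}$ and differentiating the definition of $P(R)$ yields the elementary identity
\[
\tfrac{Ca}{2}\bigl(\rho^\gamma|_{x=0}-1\bigr)R^2+\tfrac{dP}{dR}(R)=\mu(\rho r^2u_x)|_{x=0}R^2 ,
\]
whence $\tfrac{Ca}{2}\bigl(\rho^\gamma|_{x=0}-1\bigr)R^2\tfrac{dR}{dt}+\tfrac{d}{dt}P(R)=\mu(\rho r^2u_x)|_{x=0}R^2\tfrac{dR}{dt}$, which is exactly the boundary term in the statement. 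The main obstacle is the step of the second paragraph — showing that the spatial flux carries no mass to $x=\infty$ — which is precisely why the identity has to be obtained through the approximating sequence of Section 3 rather than directly from the generalized solution; the remainder is the bookkeeping already in (2.14)--(2.15) plus the one-line algebraic identity displayed above.
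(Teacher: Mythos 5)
Your proposal is correct and matches the route the paper intends: Lemma 4.2 is stated with "a same calculation" as Lemma 2.5, carried out on $[0,\infty)$ with the integration by parts justified through the approximating sequence $\tilde u_k$ exactly as in Lemma 4.1, and with the boundary term recombined with $\tfrac{d}{dt}P(R)$ via the dynamic condition (1.17). Your algebraic identity $\tfrac{Ca}{2}\bigl(\rho^\gamma|_{x=0}-1\bigr)R^2+\tfrac{dP}{dR}(R)=\mu(\rho r^2u_x)|_{x=0}R^2$ checks out, so the argument is complete.
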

\indent Using Lemma 4.1 and Lemma 4.2, we derive a uniform in time estimate of $E_1$ provided that the initial data is close to the equilibrium in the sense that $E_0(0)+E_1(0)\leq\delta$ for $\delta\lesssim 1$ sufficiently small. 
\begin{lem}
There exists $\delta>0$ such that if the initial data $(u_0,\;\rho_0,\;R_0)$ is close to the equilibrium in the sense that $E_0(0)+E_1(0)\leq\delta$, then there exists $C>0$ such that \\
(i) $E_0(t)+E_1(t)\leq C(E_0(0)+E_1(0))$ for any $t>0$,\\
(ii) $\int_0^\infty\int_0^\infty\left(r^2(\rho^\frac{\gamma}{2})_x\right)^2dxdt\leq C(E_0(0)+E_1(0)),$\\
(iii) $\rho\leq 1+C(E_0(0)+E_1(0))^\frac{1}{2},\;\rho^{-1}\leq 1+C(E_0(0)+E_1(0))^\frac{1}{2}$ for any $(x,t)\in\mathbb{R}^+\times\mathbb{R}^+$.
\end{lem}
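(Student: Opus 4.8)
The plan is to run a continuity argument anchored on the basic energy identity (Lemma 4.1) and the Bresch--Desjardins entropy equation (4.5) of Lemma 4.2, propagating the smallness of the data (note $E_0(0)+E_1(0)\le\delta$ is, up to harmless constants, the hypothesis (1.21) of Theorem 1.3). Fix $K>1$ to be chosen and set
$$T^\ast:=\sup\Big\{T>0:\ \tfrac12\le\rho(x,t)\le 2\ \text{ and }\ E_0(t)+E_1(t)\le K\big(E_0(0)+E_1(0)\big)\ \text{ for all }(x,t)\in\mathbb R^+\times[0,T]\Big\},$$
which is positive by the regularity of the solution and the smallness of the data. I will establish (i)--(iii) on $[0,T^\ast)$ with constants independent of $T^\ast$, and check that for $\delta$ small these strictly improve the two bootstrap hypotheses, which forces $T^\ast=+\infty$.

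On $[0,T^\ast)$ the basic energy identity gives $E_0(t)\le E_0(0)$, the two-sided bound $c\le R(t)\le C$ with $(R-1)^2\le C\,P(R)\le C\,E_0(t)\lesssim\delta$, the space-time dissipation bound, and (via (4.3)) the key integrability $\int_0^\infty G\,dt\lesssim\delta$ for $G(t):=\|u^2r\|_{L^\infty}$, a quantity dominating $\|u/r\|_{L^\infty}^2$ and $(\tfrac{dR}{dt})^2$. Moreover $\tilde\rho:=\rho|_{x=0}$ obeys the ODE (2.10) for $y:=(\tilde\rho R^2)^{-\gamma}$; linearizing around the equilibrium (where its right side vanishes) gives $\dot y=-\lambda_0(y-1)+O(|R-1|)+\mathrm{h.o.t.}$ with $\lambda_0>0$, so a Duhamel estimate and $|R-1|\lesssim\delta^{1/2}$ yield $|\tilde\rho(t)-1|\le C\delta^{1/2}$ uniformly in $t$, and a multiplier estimate yields the dissipative inequality $\tfrac{d}{dt}(y-1)^2+\lambda_0(y-1)^2\lesssim(R-1)^2$.

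Now I would bound the right side of (4.5). With $v:=u+\mu r^2(\log\rho)_x$ and $w:=v-u=\mu r^2(\log\rho)_x=\tfrac{2\mu}{\gamma}\rho^{-\gamma/2}r^2(\rho^{\gamma/2})_x$, the interior term $2\mu\int(\log\rho)_xru(u+\mu(\log\rho)_xr^2)\,dx=\tfrac{4\mu}{\gamma}\int\tfrac{u}{r}\rho^{-\gamma/2}(r^2(\rho^{\gamma/2})_x)v\,dx$, which by Cauchy--Schwarz (using $\rho^{-\gamma/2}\le C$ on $[0,T^\ast)$) is $\le\tfrac{\mu Ca}{\gamma}\int(r^2(\rho^{\gamma/2})_x)^2\,dx+C\,G(t)\!\int v^2\,dx$, the first piece being absorbed by the entropy dissipation on the left of (4.5). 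The boundary term $\mu(\rho r^2u_x)|_{x=0}R^2\tfrac{dR}{dt}$ is the delicate one: by Lemma 2.4 it equals $\tfrac{\mu}{3\gamma}\tfrac{\dot y}{y}\tfrac{d}{dt}R^3$, and since $\dot y=O(|R-1|+|y-1|)$ it is bounded by $\epsilon(\tfrac{dR}{dt})^2+C_\epsilon\big((R-1)^2+(y-1)^2\big)$. The $(\tfrac{dR}{dt})^2$ is harmless (integrable against $G$), but the quadratic boundary contributions $(R-1)^2$, $(y-1)^2$ must be absorbed into genuine dissipations rather than treated as forcing, or Gronwall would only give a bound growing in time; the $(y-1)^2$ term is handled by carrying a multiple of $(y-1)^2$ in the functional and invoking the ODE dissipation above, and the surviving $(R-1)^2$ must be balanced against the basic-energy dissipation — I expect this to require, if needed, a small hypocoercivity-type correction $\alpha(R-1)\tfrac{dR}{dt}$ exploiting the momentum equation at $x=0$ to produce a negative multiple of $(R-1)^2$. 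Granting this, one obtains $\tfrac{d}{dt}\mathcal E(t)+c\big[\int(r^2(\rho^{\gamma/2})_x)^2\,dx+(R-1)^2+(y-1)^2\big]\le G(t)\,\mathcal E(t)$ for a functional $\mathcal E\simeq E_0+E_1$, and Gronwall with the $L^1_t$ weight $G$ gives $E_0(t)+E_1(t)\le C(E_0(0)+E_1(0))$ with $C$ independent of $T^\ast$; taking $K=2C$ improves the energy bootstrap, and integrating the same inequality over $[0,\infty)$ yields (ii).

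For (iii), on $[0,T^\ast)$ we have $\rho^{-1}\le2$, hence $r^3=R^3+3\int_0^x\rho^{-1}\,dy\ge c^3+\tfrac32x$ and $\int_0^\infty r^{-4}\,dx\le C$ with $C$ independent of $\delta$; together with $\|r^2(\log\rho)_x\|_{L^2}^2=\mu^{-2}\|w\|_{L^2}^2\lesssim E_0(t)+E_1(t)\lesssim\delta$ this gives $\|(\log\rho)_x\|_{L^1}\le C\delta^{1/2}$. As $\rho-1\in L^2$ and $(\log\rho)_x\in L^1$, $\log\rho(\cdot,t)\to0$ at $x=+\infty$, so $\|\log\rho(\cdot,t)\|_{L^\infty}\le\|(\log\rho)_x(\cdot,t)\|_{L^1}\le C\delta^{1/2}$, i.e.\ $\rho,\rho^{-1}\le1+C\delta^{1/2}$, which for $\delta$ small is $<2$ and improves the remaining bootstrap hypothesis; hence $T^\ast=+\infty$ and (i)--(iii) hold globally. \textbf{The main obstacle} is exactly the boundary term in (4.5): unlike in Section 2 one cannot afford a time-dependent constant, so one must recognize its structure through the boundary ODE (2.10) and spend all available dissipations — basic energy, entropy, and the ODE dissipation — to prevent any term from accumulating in $t$; this is where the smallness of the data is genuinely consumed, and it is what upgrades the density bounds of Lemma 2.6 to bounds uniform in time.
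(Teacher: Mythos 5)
Most of your plan parallels the paper: the Gronwall weight $\|u^2r\|_{L^\infty}\in L^1_t$ from the basic energy, the Cauchy--Schwarz treatment of the interior term of (4.4) with half absorbed into the entropy dissipation, and the $L^\infty$ bound on $\log\rho$ from $\|r^2(\log\rho)_x\|_{L^2}$ are all in line with the paper's argument (the paper gets (iii) slightly more directly, via $\bigl|\partial_x(1-\rho^{-1/4})^2\bigr|\le C H(\rho)r^{-4}+C(\log\rho)_x^2r^4$, which needs no bootstrap bound on $\rho$). The genuine gap is exactly the point you flag and then defer: the boundary term $\mu(\rho r^2u_x)|_{x=0}R^2\frac{dR}{dt}$. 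Your pointwise bound $\dot y=O(|R-1|+|y-1|)$ reduces it to $\epsilon\bigl(\frac{dR}{dt}\bigr)^2+C_\epsilon\bigl((R-1)^2+(y-1)^2\bigr)$, and neither quadratic term is time-integrable nor matched by any available dissipation: the ODE inequality $\frac{d}{dt}(y-1)^2+\lambda_0(y-1)^2\lesssim(R-1)^2$ merely trades $(y-1)^2$ for the same forcing $(R-1)^2$, and the proposed correction $\alpha(R-1)\frac{dR}{dt}$ is not carried out and cannot be at this stage, since its time derivative contains $\alpha(R-1)\frac{d^2R}{dt^2}$ with $\frac{d^2R}{dt^2}=u_t|_{x=0}$, a trace of $u_t$ controlled only by the first-order estimates of Lemmas 4.5--4.6, which themselves presuppose the present lemma; the momentum equation gives no pointwise expression for $u_t|_{x=0}$ that produces a restoring $-c(R-1)^2$. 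Note also that the final result only gives $(R-1)^2\lesssim(1+t)^{-1}$, so a scheme that needs $\int_0^\infty(R-1)^2dt$ small is asking for more than the paper ever establishes. As written, Gronwall with the forcing $(R-1)^2\lesssim\delta$ gives a bound growing linearly in $t$, so (i) and (ii) do not close.

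The paper closes this step without manufacturing any new dissipation, by keeping the convolution structure of the boundary ODE rather than a pointwise bound on $\dot y$. Integrating the Duhamel formula (2.11) by parts gives (4.9): $(\rho|_{x=0}R^2)^{-\gamma}-\mathfrak{R}(t)=\bigl((\rho_0|_{x=0}R_0^2)^{-\gamma}-\mathfrak{R}(0)\bigr)S(t)-\int_0^tS(t-\tau)\frac{d\mathfrak{R}}{dt}(\tau)d\tau$, where $S(t)\le\exp\bigl\{-\frac{\gamma}{\mu}\bigl(\frac{Ca}{2}-C\delta^{1/2}\bigr)t\bigr\}$ by (4.8) and $\bigl|\frac{d\mathfrak{R}}{dt}\bigr|\lesssim\bigl|\frac{dR}{dt}\bigr|$. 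Since $\mu(\rho r^2u_x)|_{x=0}$ equals $-(\rho|_{x=0}R^2)^{\gamma}\bigl[(\frac{Ca}{2}+\frac{2}{We})R^{-3\gamma_0}-\frac{2}{We}R^{-1}\bigr]\bigl((\rho|_{x=0}R^2)^{-\gamma}-\mathfrak{R}(t)\bigr)$, it is pointwise bounded by $C\delta^{1/2}S(t)+C\bigl(S*\bigl|\frac{dR}{dt}\bigr|\bigr)(t)$; pairing with $\bigl|\frac{dR}{dt}\bigr|\in L^2_t$ (which follows from $\int_0^\infty\|u^2r\|_{L^\infty}dt\le\mu^{-1}E_0(0)$) and using Cauchy--Schwarz plus Young's convolution inequality yields $\int_0^\infty\bigl|\mu(\rho r^2u_x)|_{x=0}R^2\frac{dR}{dt}\bigr|dt\le C(E_0(0)+E_1(0))$, as in (4.10)--(4.11). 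Thus the boundary work is an integrable-in-time forcing of size $O(\delta)$ and Gronwall gives (i)--(ii) directly. Replacing your pointwise bound on $\dot y$ by this representation is the one missing idea; the rest of your write-up can then be kept essentially as is.
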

\begin{proof}
We begin with the estimates of the right-hand side of (4.4). 
Using (1.18) and $\int_0^\infty r^{-4}dx=\int_{R(t)}^\infty\rho r^{-2}dr\leq R(t)^{-1}\sup_{x\in\mathbb{R}}\rho$, it holds that
\iffalse
\begin{equation}\begin{aligned}
&\left|\int_0^\infty\mu ru^2(\log\rho)_xdx\right|\leq\|u^2r\|_{L^\infty}\int_0^\infty\mu\left|(\log\rho)_x\right|dx\\
\leq&\|u^2r\|_{L^\infty}\left(\int_0^\infty(\mu(\log\rho)_xr^2)^2dx\right)^{\frac{1}{2}}\left(\int_0^\infty r^{-4}dx\right)^{\frac{1}{2}}\\
\leq&R(t)^{-\frac{1}{2}}\left(\sup_{x\in\mathbb{R}}\rho^\frac{1}{2}\right)\|u^2r\|_{L^\infty}\left(\int_0^\infty(\mu(\log\rho)_xr^2)^2dx\right)^{\frac{1}{2}}
\end{aligned}\end{equation}
\fi
\begin{equation}\begin{aligned}
\left|\int_0^\infty\mu ru^2(\log\rho)_xdx\right|\leq &\left(\frac{Ca\gamma}{4\mu}\right)^{-1}\int_0^\infty\rho^{-\gamma}\frac{u^4}{r^2}dx+\frac{Ca}{2}\frac{\mu}{2\gamma}\int_0^\infty(\rho^\frac{\gamma}{2})_x^2r^4dx\\
\leq&\left(\frac{Ca\gamma}{4\mu}\right)^{-1}\|u^2r\|_{L^\infty}\|r^{-3}\rho^{-\gamma}\|_{L^\infty}\int_0^\infty u^2dx+\frac{Ca}{2}\frac{\mu}{2\gamma}\int_0^\infty(\rho^\frac{\gamma}{2})_x^2r^4dx,
\end{aligned}\end{equation}
and
\begin{equation}\begin{aligned}
&\left|\int_0^\infty\frac{u}{r}(\mu(\log\rho)_xr^2)^2dx\right|\\
\leq&\left(\frac{Ca\gamma}{4\mu}\right)^{-1}\int_0^\infty\frac{u^2}{r^2}\rho^{-\gamma}(\mu(\log\rho)_xr^2)^2dx+\frac{Ca}{2}\frac{\mu}{2\gamma}\int_0^\infty(\rho^\frac{\gamma}{2})_x^2r^4dx\\
\leq&\left(\frac{Ca\gamma}{4\mu}\right)^{-1}\|u^2r\|_{L^\infty}\|r^{-3}\rho^{-\gamma}\|_{L^\infty}\int_0^\infty(\mu(\log\rho)_xr^2)^2dx+\frac{Ca}{2}\frac{\mu}{2\gamma}\int_0^\infty(\rho^\frac{\gamma}{2})_x^2r^4dx.
\end{aligned}\end{equation}
To control the lower bound of $\rho$, note first that
$$\left|\partial_x(1-\rho^{-\frac{1}{4}})^2\right|=\left|\frac{1}{2}(1-\rho^{-\frac{1}{4}})\rho^{-\frac{1}{4}}(\log\rho)_x\right|\leq\frac{1}{4}(\rho^{-\frac{1}{4}}-\rho^{-\frac{1}{2}})^2r^{-4}+\frac{1}{4}(\log\rho)_x^2r^4.$$
In order to control the first term on the right-hand side, we use the inequality 
$$(\rho^{\lambda_1}-\rho^{\lambda_2})^2\leq C_{\lambda_1,\lambda_2} H(\rho)\leq C_{\lambda_1,\lambda_2}E_0$$ 
for $\lambda_1,\;\lambda_2$ with $-\frac{1}{2}\leq\lambda_2\leq\lambda_1\leq\frac{\gamma-1}{2}$.
Hence by integrating in $x$ we obtain  the lower bound of $\rho$ that $\|1-\rho^{-\frac{1}{4}}\|^2_{L^\infty}\leq C(E_0+E_1)$. Similarly, for the upper bound, the inequality
$$\left|\partial_x(\rho^{\frac{\gamma-1}{4}}-1)^2\right|\leq\left|\frac{\gamma-1}{2}(\rho^{\frac{\gamma-1}{4}}-1)\rho^{\frac{\gamma-1}{4}}(\log\rho)_x\right|\leq\frac{\gamma-1}{2}(\rho^{\frac{\gamma-1}{2}}-\rho^{\frac{\gamma-1}{4}})^2r^{-4}+\frac{\gamma-1}{2}(\log\rho)_x^2r^4$$
yields that $\|\rho^{\frac{\gamma-1}{4}}-1\|^2_{L^\infty}\leq C(E_0+E_1)$. Hence 
\begin{equation}
\|\rho\|_{L^\infty}\leq\left(1+C(E_0+E_1)^\frac{1}{2}\right)^\frac{4}{\gamma-1},\;\|\rho^{-\gamma}\|_{L^\infty}\leq\left(1+C(E_0+E_1)^\frac{1}{2}\right)^{4\gamma}.
\end{equation}
Note that $(u,\;\rho,\;R)$ also satisfies (2.10), and thus $(\rho|_{x=0}R^2)^{-\gamma}$ can be represented by (2.11).  Let $S(t)$ be  as in Lemma 2.4.  Since $|R-1|^2\leq CP(R)\leq C\delta$, it holds for sufficiently small $\delta$ that 
\begin{equation}\left(\frac{Ca}{2}+\frac{2}{We}\right)R^{-3\gamma_0}-\frac{2}{We}R^{-1}\geq\frac{Ca}{2}-C|R-1|\geq\frac{Ca}{2}-C\delta^{\frac{1}{2}}>0,
\end{equation}
which guarantees that $\int_0^\infty S(t)dt<+\infty$.
Introduce the notation $$\mathfrak{R}(t)=\frac{Ca}{2}R^{-2\gamma}\left[\left(\frac{Ca}{2}+\frac{2}{We}\right)R^{-3\gamma_0}-\frac{2}{We}R^{-1}\right]^{-1}(t).$$ 
It then follows from (4.2) that $(\mathfrak{R}-1)^2\leq CE_0$.  Integrating by parts in (2.11) yields that
\begin{equation}\begin{aligned}
(\rho|_{x=0}R^2)^{-\gamma}=&\mathfrak{R}(t)+\left((\rho_0|_{x=0}R_0^2)^{-\gamma}-\mathfrak{R}(0)\right)S(t)-\int_0^tS(t-\tau)\frac{d\mathfrak{R}}{dt}(\tau)d\tau\\
=&\mathfrak{R}(t)+\left((\rho_0|_{x=0}R_0^2)^{-\gamma}-\mathfrak{R}(t)\right)S(t)\\&+\frac{\gamma}{\mu}\int_0^t(\mathfrak{R}(\tau)-\mathfrak{R}(t))\left[\left(\frac{Ca}{2}+\frac{2}{We}\right)R^{-3\gamma_0}-\frac{2}{We}R^{-1}\right]S(t-\tau)d\tau.
\end{aligned}\end{equation} 
Hence $1-C\delta^{\frac{1}{2}}\leq(\rho|_{x=0}R^2)^{-\gamma}\leq 1+C\delta^{\frac{1}{2}}$ for all $t>0$ in view of the integrability and boundedness of $S(t)$.
Since $$\mu(\rho r^2u_x)|_{x=0}=-\mu(\rho^{-1}\rho_t-2 ur^{-1})|_{x=0}=\frac{\mu}{\gamma}(\rho|_{x=0}R^2)^{\gamma}\partial_t(\rho|_{x=0}R^2)^{-\gamma},$$
and from (2.10)(4.9) that 
$$\begin{aligned}
&\frac{\mu}{\gamma}(\rho|_{x=0}R^2)^{\gamma}\partial_t(\rho|_{x=0}R^2)^{-\gamma}\\
=&-(\rho|_{x=0}R^2)^\gamma\left[\left(\frac{Ca}{2}+\frac{2}{We}\right)R^{-3\gamma_0}-\frac{2}{We}R^{-1}\right]\left((\rho|_{x=0}R^2)^{-\gamma}-\mathfrak{R}(t)\right)\\
=&-(\rho|_{x=0}R^2)^\gamma\left[\left(\frac{Ca}{2}+\frac{2}{We}\right)R^{-3\gamma_0}-\frac{2}{We}R^{-1}\right]\left((\rho_0|_{x=0}R_0^2)^{-\gamma}-\mathfrak{R}(0)\right)S(t)\\
&+(\rho|_{x=0}R^2)^\gamma\left[\left(\frac{Ca}{2}+\frac{2}{We}\right)R^{-3\gamma_0}-\frac{2}{We}R^{-1}\right]\int_0^tS(t-\tau)\frac{d\mathfrak{R}}{dt}(\tau)d\tau ,
\end{aligned}$$
it follows 
\begin{equation}
\left|\mu(\rho r^2u_x)|_{x=0}R^2\frac{dR}{dt}\right|\leq C \left((E_0(0)+E_1(0))^\frac{1}{2}S(t)\left|\frac{dR}{dt}\right|+\int_0^tS(t-\tau)\left|\frac{d\mathfrak{R}}{dt}\right|(\tau)d\tau\left|\frac{dR}{dt}\right|\right),
\end{equation}
where we used that $\left|(\rho_0|_{x=0}R_0^2)^{-\gamma}-\mathfrak{R}(0)\right|\leq C(E_0(0)+E_1(0))^\frac{1}{2}$ since $E_0(0)+E_1(0)\lesssim 1$. Moreover, (4.8) implies $S(t)\leq\exp\left\{-\frac{\gamma}{\mu}\left(\frac{Ca}{2}-C\delta^{\frac{1}{2}}\right)t\right\}$, and thus $\int_0^\infty S(t)\left|\frac{dR}{dt}\right|dt\leq C\left(\int_0^\infty\left|\frac{dR}{dt}\right|^2dt\right)^{\frac{1}{2}}$. Using Young's inequality, it holds $$\int_0^\infty\int_0^tS(t-\tau)\left|\frac{d\mathfrak{R}}{dt}\right|(\tau)d\tau\left|\frac{dR}{dt}\right|(t)dt\leq C\int_0^\infty\left|\frac{dR}{dt}\right|^2dt.$$ 
Integrating (4.10) in $t$, the boundary term $\mu(\rho r^2u_x)|_{x=0}R^2\frac{dR}{dt}$ has the control
\begin{equation}\begin{aligned}
&\int_0^\infty\left|\mu(\rho r^2u_x)|_{x=0}R^2\frac{dR}{dt}\right|dt\\
\leq &C\left[(E_0(0)+E_1(0))^\frac{1}{2}\left(\int_0^\infty\left|\frac{dR}{dt}\right|^2dt\right)^{\frac{1}{2}}+\int_0^\infty\left|\frac{dR}{dt}\right|^2dt\right]\\
\leq& C\left[(E_0(0)+E_1(0))^\frac{1}{2}\left(\int_0^\infty\|u^2r\|_{L^\infty}dt\right)^{\frac{1}{2}}+\int_0^\infty\|u^2r\|_{L^\infty}dt\right].
\end{aligned}\end{equation}
Now conclude from (4.4)(4.5)(4.6)(4.7) that
\begin{equation}\begin{aligned}
&\frac{d}{dt}E_1+\frac{Ca}{2}\frac{2\mu}{\gamma}\int_0^\infty\left(r^2(\rho^{\frac{\gamma}{2}})_x\right)^2dx\\
\leq&\left|\mu(\rho r^2u_x)|_{x=0}R^2\frac{dR}{dt}\right|+C\|u^2r\|_{L^\infty}\left(1+(E_0+E_1)^{2\gamma}\right)\left(\int_0^\infty u^2dx\right)\\
&+C\|u^2r\|_{L^\infty}\left(1+(E_0+E_1)^{2\gamma}\right)\left(\int_0^\infty(\mu(\log\rho)_xr^2)^2dx\right)\\
\leq& \left|\mu(\rho r^2u_x)|_{x=0}R^2\frac{dR}{dt}\right|+C\|u^2r\|_{L^\infty}\left(1+(E_0+E_1)^{2\gamma}\right)(E_0+E_1).
\end{aligned}\end{equation}
In view of (4.3), (4.11), $\frac{d}{dt}E_0\leq0$ and that $E_0(0)+E_1(0)\leq\delta\lesssim 1$, adding $\frac{d}{dt}E_0$ on the left-hand side of (4.12) and using Gronwall's inequality, it follows
$$
E_0(t)+E_1(t)\leq C(E_0(0)+E_1(0)),\;\int_0^\infty\int_0^\infty\left(r^2(\rho^\frac{\gamma}{2})_x\right)^2dxdt\leq C(E_0(0)+E_1(0)).
$$ 
At last, using (4.7) in view of $E_0(0)+E_1(0)\leq\delta\lesssim 1$, $\rho$ has uniform bounds from above and below in time as given in (iii).
\end{proof}
\begin{rem}
Through the proof of Lemma 4.3, one can see that the restriction $E_0(0)+E_1(0)\leq\delta\lesssim 1$ is due to the requirement that $\int_0^\infty S(t) dt\leq+\infty$ and the feasibility of applying Gronwall's inequality to (4.13). 
\end{rem}
In order to derive the viscous damping, it remains to establish the energy estimates for the derivatives of the solution. To this end, we have the following two energy identities.
\begin{lem}[Energy identities for 1-order derivatives]
Define for $t>0$, $$E_2(t)=\frac{1}{2}\int_0^\infty u_t^2dx+\frac{Ca}{2}\frac{2\gamma}{(\gamma-1)^2}\int_0^\infty(\rho^\frac{\gamma-1}{2})_t^2dx+\left[\frac{3\gamma_0}{2}\left(\frac{Ca}{2}+\frac{2}{We}\right)-\frac{1}{We}\right]\left(\frac{dR}{dt}\right)^2,$$
$$E_3(t)=\frac{1}{2}\int_0^\infty\left(u_t+\mu(\log\rho)_{xt}r^2\right)^2dx+\frac{Ca}{2}\frac{2\gamma}{(\gamma-1)^2}\int_0^\infty(\rho^\frac{\gamma-1}{2})_t^2dx.$$
Then 
\begin{equation}\begin{aligned}
&\frac{d}{dt}E_2(t)+\mu\int_0^\infty\rho(r^2u_{xt})^2dx+2\mu\int_0^\infty\rho^{-1}\frac{u_t^2}{r^2}dx=-2\frac{Ca}{2}\int_0^\infty(\rho^\gamma)_xruu_tdx\\
&+\frac{Ca}{2}\frac{\gamma(\gamma+1)}{2}\int_0^\infty\rho^{\gamma-4}\rho_t^3dx+4\gamma\frac{Ca}{2}\int_0^\infty\rho^{\gamma-3}\rho_t^2\frac{u}{r}dx+6\gamma\frac{Ca}{2}\int_0^\infty\rho^{\gamma-2}\rho_t\frac{u^2}{r^2}dx\\
&-\mu\int_0^\infty\rho_t(r^2u)_x(r^2u_t)_xdx-\mu\int_0^\infty\rho\left((r^2)_tu\right)_x(r^2u_t)_xdx+\mu\int_0^\infty\left(\rho(r^2u)_x\right)_x(r^2)_tu_tdx\\
&+2\mu(u^2u_t)|_{x=0}-3\gamma_0\left(\frac{Ca}{2}+\frac{2}{We}\right)(R^{-3\gamma_0+1}-1)\frac{dR}{dt}\frac{d^2R}{dt^2}.
\end{aligned}\end{equation}
and
\begin{equation}\begin{aligned}
&\frac{d}{dt}E_3(t)+\frac{Ca}{2}\mu\gamma\int_0^\infty\rho^\gamma r^4(\log\rho)_{xt}^2dx=-2\frac{Ca}{2}\int_0^\infty(\rho^\gamma)_xruu_tdx+\frac{Ca}{2}(\rho^\gamma)_t|_{x=0}R^2\frac{d^2R}{dt^2}\\
&+\frac{Ca}{2}\frac{\gamma(\gamma+1)}{2}\int_0^\infty\rho^{\gamma-4}\rho_t^3dx+4\gamma\frac{Ca}{2}\int_0^\infty\rho^{\gamma-3}\rho_t^2\frac{u}{r}dx+6\gamma\frac{Ca}{2}\int_0^\infty\rho^{\gamma-2}\rho_t\frac{u^2}{r^2}dx\\
&-2\frac{Ca}{2}\mu\int_0^\infty(\rho^\gamma)_xr^3u(\log\rho)_{xt}dx-\frac{Ca}{2}\mu\gamma\int_0^\infty r^4(\rho^\gamma)_x(\log\rho)_t(\log\rho)_{xt}dx.
\end{aligned}\end{equation}
\end{lem}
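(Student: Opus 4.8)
The plan is to derive (4.14) and (4.15) as exact differential identities on $(0,+\infty)$, following the pattern of the computations already carried out on $[0,k]$ in Steps 1--3 of the proof of Lemma 2.7 and in the proof of Lemma 2.8, but organising the output so that only the $L^2$-controlled quantities $u_t$, $(\rho^{\frac{\gamma-1}{2}})_t$ and $\frac{dR}{dt}$ are collected into $E_2,E_3$ and everything else is left on the right-hand side. Concretely, for (4.14) I would differentiate the momentum equation (1.15) in $t$, multiply by $u_t$, and integrate over $(0,+\infty)$; for (4.15) I would first use (1.14) to rewrite the viscous term as $\mu r^2(\rho(r^2u)_x)_x=-\mu r^2(\log\rho)_{xt}$, so that (1.15) reads $u_t+\mu r^2(\log\rho)_{xt}+\frac{Ca}{2}r^2(\rho^\gamma)_x=0$, then differentiate this in $t$ and multiply by $u_t+\mu r^2(\log\rho)_{xt}$. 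The pressure-related piece is common to both: for $\big((\rho^\gamma)_xr^2\big)_tu_t$ one writes $(r^2)_t=2ru$, exchanges $\partial_x$ and $\partial_t$, integrates by parts in $x$, and substitutes (1.14) in the form $(r^2u)_x=-\rho^{-2}\rho_t$; this produces the perfect time derivative $\frac{2\gamma}{(\gamma-1)^2}\partial_t(\rho^{\frac{\gamma-1}{2}})_t^2$, the boundary term $\big[(\rho^\gamma r^2)_tu_t\big]_x$, and polynomial remainders of exactly the types $\rho^{\gamma-4}\rho_t^3$, $\rho^{\gamma-3}\rho_t^2\frac ur$, $\rho^{\gamma-2}\rho_t\frac{u^2}{r^2}$ and $(\rho^\gamma)_xruu_t$ appearing on the right of (4.14) and (4.15). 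For (4.14) the viscous term $\big[(\rho(r^2u)_x)_xr^2\big]_tu_t$ is treated as in Step 2 of Lemma 2.7: exchanging derivatives, integrating by parts and using $r_t=u$ extracts the sign-definite dissipation $-\rho(r^2u_{xt})^2-2\rho^{-1}\frac{u_t^2}{r^2}$, the boundary term $\big[(\rho r^4u_x)_tu_t+2u^2u_t\big]_x$, and the three remainders $\rho_t(r^2u)_x(r^2u_t)_x$, $\rho\big((r^2)_tu\big)_x(r^2u_t)_x$, $(\rho(r^2u)_x)_x(r^2)_tu_t$ of (4.14). For (4.15) the extra cross term $\mu\big((\rho^\gamma)_xr^2\big)_t(\log\rho)_{xt}r^2$ is expanded by the product rule together with $(\rho^\gamma)_t=\gamma\rho^\gamma(\log\rho)_t$, giving the good dissipation $\mu\gamma\rho^\gamma r^4(\log\rho)_{xt}^2$ plus the remainders $r^4(\rho^\gamma)_x(\log\rho)_t(\log\rho)_{xt}$ and $(\rho^\gamma)_xr^3u(\log\rho)_{xt}$.

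The boundary terms at $x=0$ are handled as in Step 3 of Lemma 2.7. For (4.14), differentiate the dynamic boundary condition (1.17) in $t$ to get $\big(\frac{Ca}{2}(\rho^\gamma r^2)_t-\mu\rho(r^4u_x)_t\big)|_{x=0}=\big[(\frac{Ca}{2}+\frac{2}{We})R^{-3\gamma_0+2}-\frac{2}{We}R\big]_t$, multiply by $u_t|_{x=0}=\frac{d^2R}{dt^2}$, and split the right-hand side into a constant multiple of $\frac{d}{dt}\big(\frac{dR}{dt}\big)^2$ — this is the term absorbed into $E_2$ with coefficient $\frac{3\gamma_0}{2}(\frac{Ca}{2}+\frac{2}{We})-\frac{1}{We}$ — plus the leftover $-3\gamma_0(\frac{Ca}{2}+\frac{2}{We})(R^{-3\gamma_0+1}-1)\frac{dR}{dt}\frac{d^2R}{dt^2}$, which joins the remaining boundary contribution $2\mu(u^2u_t)|_{x=0}$ on the right of (4.14). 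For (4.15) the only boundary datum that survives is $(\rho^\gamma r^2)_t|_{x=0}$, producing $\frac{Ca}{2}(\rho^\gamma)_t|_{x=0}R^2\frac{d^2R}{dt^2}$; this is well defined because $(u,\rho,R)$ also satisfies the scalar ODE (2.10) for the trace $\rho|_{x=0}$.

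The one genuinely new difficulty, absent on the bounded domain, is the boundary contribution at $x=+\infty$, which must vanish but which a generalized solution does not control pointwise in $x$. I would dispose of it exactly as in Section 3 and the proof of Lemma 4.1: whenever an integration by parts in $x$ would create a term at $x=+\infty$, first insert the cut-off $\phi_k$ inside the corresponding total $x$-derivative so the integrand is compactly supported, integrate by parts picking up only the $x=0$ trace, and then let $k\to+\infty$ using the weak-$\ast$ convergences (3.13), the strong convergences (3.16)--(3.17) and the uniform-in-$k$ bounds (3.14)--(3.15) together with Lemma 4.3; these estimates furnish precisely the weighted $L^1_x$-integrability of the boundary integrands (such as $(\rho^\gamma r^2)_tu_t$, $(\rho r^4u_x)_tu_t$, $ru^2u_t$, $r^4(\rho^\gamma)_x(\log\rho)_t$) and of their $x$-derivatives needed for the $\phi_k$- and $(\phi_k)_x$-contributions to drop out. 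Equivalently, one derives the identities first for the bounded-domain solutions $(u_k,\rho_k,R_k)$ of Proposition 2.1, where $u_k|_{x=k}=0$ kills the outer boundary term, and then passes to the limit.

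Finally, assembling: forming $\frac{Ca}{2}\times$(pressure piece)$+\mu\times$(viscous piece) for (4.14), and (pressure piece)$+$(cross-term piece) for (4.15), integrating over $(0,+\infty)$, moving the perfect time derivatives into $\frac{d}{dt}E_2$ (resp. $\frac{d}{dt}E_3$) and the sign-definite dissipations $\mu\int\rho(r^2u_{xt})^2+2\mu\int\rho^{-1}\frac{u_t^2}{r^2}$ (resp. $\frac{Ca}{2}\mu\gamma\int\rho^\gamma r^4(\log\rho)_{xt}^2$) to the left, while leaving all the polynomial and boundary remainders on the right, reproduces (4.14) and (4.15) verbatim. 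No estimates are required here — these are exact identities, and bounding their right-hand sides is the task of the subsequent lemmas. I expect the main obstacle to be the rigorous justification of the vanishing boundary term at $x=+\infty$ just sketched; the rest is the (already-done) bounded-case algebra together with the light bookkeeping needed to make the $x=0$ boundary term land precisely in the stated form of $E_2$.
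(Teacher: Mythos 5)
Your proposal is correct and follows essentially the paper's own route: differentiate (1.15) in $t$ and test with $u_t$ (resp.\ with $u_t+\mu r^2(\log\rho)_{xt}$ after rewriting the viscous term via (1.14), i.e.\ equation (2.28)), expand the pressure, viscous and cross terms as in Lemmas 2.7--2.8, insert the $t$-differentiated boundary condition (1.17) at $x=0$, and justify integration by parts on $(0,+\infty)$ by the same $\tilde u_k$/weak-convergence device as in Lemma 4.1 --- which is exactly what the paper does. One bookkeeping caveat: to land on the stated $E_2$ (constant coefficient $\frac{3\gamma_0}{2}\left(\frac{Ca}{2}+\frac{2}{We}\right)-\frac{1}{We}$ on $\left(\frac{dR}{dt}\right)^2$) and the stated bulk remainders, you must keep the exact boundary fluxes $\left[(\rho^\gamma)_tr^2u_t\right]_x$ and $\left[(\rho r^2u_x)_tr^2u_t-2u^2u_t\right]_x$ as the paper does, rather than the bounded-domain forms $\left[(\rho^\gamma r^2)_tu_t\right]_x$, $\left[(\rho r^4u_x)_tu_t+2u^2u_t\right]_x$ you quote from Lemma 2.7, since that organization produces the $R$-dependent coefficient and the extra terms ($\frac{4}{\gamma-1}\rho^{\frac{\gamma-1}{2}}(\rho^{\frac{\gamma-1}{2}})_t\frac{u}{r}+3\rho^{\gamma-1}\frac{u^2}{r^2}$) inside the energy instead of the identities as stated.
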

\begin{proof}
To derive (4.13), we differentiate (1.15) with respect to $t$ and multiply the resulted equation by $u_t$. Using integration by parts, equation (1.14) and (1.18), the term involving pressure is
\begin{equation}\begin{aligned}
\left[(\rho^\gamma)_xr^2\right]_tu_t=&(\rho^\gamma)_x(r^2)_tu_t+(\rho^\gamma)_{xt}r^2u_t\\=&\left[(\rho^\gamma)_tr^2u_t\right]_x-(\rho^\gamma)_t(r^2u_t)_x+2(\rho^\gamma)_xruu_t\\
=&\left[(\rho^\gamma)_tr^2u_t\right]_x+\frac{2\gamma}{(\gamma-1)^2}\partial_t((\rho^\frac{\gamma-1}{2})_t^2)\\
&-\frac{\gamma(\gamma+1)}{2}\rho^{\gamma-4}\rho_t^3-4\gamma\rho^{\gamma-3}\rho_t^2\frac{u}{r}-6\gamma\rho^{\gamma-2}\rho_t\frac{u^2}{r^2}
+2(\rho^\gamma)_xruu_t,
\end{aligned}\end{equation}
and for the term involving viscosity that
$$\begin{aligned}
\left[(\rho(r^2u)_x)_xr^2\right]_tu_t
=&\left[(\rho(r^2u)_x)_tr^2u_t\right]_x-\left[\rho(r^2u)_x\right]_t(r^2u_t)_x
+(\rho(r^2u)_x)_x(r^2)_tu_t\\
=&\left[(\rho r^2u_x)_tr^2u_t+2ru_t^2-2u^2u_t\right]_x-\rho(r^2u_t)_x^2\\
&-\rho_t(r^2u)_x(r^2u_t)_x-\rho((r^2)_tu)_x(r^2u_t)_x+(\rho(r^2u)_x)_x(r^2)_tu_t\\
=&\left[(\rho r^2u_x)_tr^2u_t\right]_x-2(u^2u_t)_x-\rho(r^2u_{xt})^2-2\rho^{-1}\frac{u_t^2}{r^2}\\
&-\rho_t(r^2u)_x(r^2u_t)_x-\rho((r^2)_tu)_x(r^2u_t)_x+(\rho(r^2u)_x)_x(r^2)_tu_t.
\end{aligned}$$
The boundary term, in view of (1.17), is
$$\begin{aligned}
&\left.\left[-\frac{Ca}{2}(\rho^\gamma)_t+\mu(\rho r^2u_x)_t\right]\right|_{x=0}(r^2u_t)|_{x=0}\\
=&-\left[\left(\frac{Ca}{2}+\frac{2}{We}\right)R^{-3\gamma_0}-\frac{2}{We}R^{-1}\right]_tR^2\frac{d^2R}{dt^2}\\
=&\left[\frac{3\gamma_0}{2}\left(\frac{Ca}{2}+\frac{2}{We}\right)-\frac{1}{We}\right]\frac{d}{dt}\left(\frac{dR}{dt}\right)^2+3\gamma_0\left(\frac{Ca}{2}+\frac{2}{We}\right)(R^{-3\gamma_0+1}-1)\frac{dR}{dt}\frac{d^2R}{dt^2}.
\end{aligned}$$
Then by integrating the equation on $(x,t)\in(0,+\infty)\times(0,T)$ and playing the same trick as in Lemma 4.1 (using $\tilde{u}_k$ and (3.13) to apply integration by parts), we arrive at (4.13) .
To derive (4.14), first note that 
$$\left((\rho^\gamma)_xr^2\right)_t(\log\rho)_{xt}r^2=\gamma\rho^\gamma r^4(\log\rho)_{xt}^2+2(\rho^\gamma)_xr^3u(\log\rho)_{xt}+\gamma r^4(\rho^\gamma)_x(\log\rho)_t(\log\rho)_{xt}.$$
Then (4.14) follows from multiplying (2.28) by $(u_t+\mu(\log\rho)_{xt}r^2)$, integrating on $(0,+\infty)$ and  (4.15).
\end{proof}
With the help of Lemma 4.5, one can establish controls for the derivatives of the solution.
\begin{lem}
Suppose $E_0(0)$ and $E_1(0)$ satisfy the same assumption as in Lemma 4.3, then there exists $C>0$ such that\\
(i) $E_2(t)+E_3(t)\leq C(E_2(0)+E_3(0))$ for any $t>0$,\\
(ii) $ \int_0^\infty\int_0^\infty\rho(r^2u_{xt})^2dxdt
+2\int_0^\infty\int_0^\infty\rho^{-1}\frac{u_t^2}{r^2}dxdt\leq C(E_2(0)+E_3(0))$,\\
(iii) $\int_0^\infty\int_0^\infty r^4\rho^\gamma(\log\rho)_{xt}^2dxdt\leq C(E_2(0)+E_3(0))$.
\end{lem}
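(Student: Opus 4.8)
The plan is to add the two energy identities (4.13) and (4.14) and close the resulting differential inequality by Gronwall. Writing
\[
\mathcal D(t):=\mu\int_0^\infty\rho(r^2u_{xt})^2dx+2\mu\int_0^\infty\rho^{-1}\frac{u_t^2}{r^2}dx+\frac{Ca}{2}\mu\gamma\int_0^\infty\rho^\gamma r^4(\log\rho)_{xt}^2dx
\]
for the full dissipation on the left of (4.13)+(4.14), the target is
\[
\frac{d}{dt}\bigl(E_2+E_3\bigr)+\tfrac12\,\mathcal D(t)\ \le\ g(t)\,\bigl(E_2+E_3\bigr),\qquad \|g\|_{L^1(0,\infty)}\le C,
\]
where $g$ is a fixed linear combination of $\|u^2r\|_{L^\infty}$, $\int_0^\infty(\rho^{\frac{\gamma-1}{2}})_t^2dx$ and $\int_0^\infty(r^2(\rho^{\frac{\gamma}{2}})_x)^2dx$: the first is time-integrable by (4.3), the second because $(\rho^{\frac{\gamma-1}{2}})_t^2=C\rho^{\gamma+1}((r^2u)_x)^2\le C(\rho(r^2u_x)^2+\rho^{-1}u^2/r^2)$ is time-integrable by Lemma 4.1, and the third by Lemma 4.3(ii). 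Granting this, integrating in $t$ gives (i), and integrating once more while keeping $\tfrac12\mathcal D$ on the left gives (ii) and (iii). As in Lemma 4.1, every integration by parts on $(0,\infty)$ is justified by first passing through the truncations $\tilde u_k$ and using the convergences (3.13).

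For the interior terms of (4.13)--(4.14) I would follow Step 4 of the proof of Lemma 2.7 almost verbatim, the only change being that the time-uniform bounds of Lemma 4.3 ($\underline\rho\le\rho\le\overline\rho$ with $\underline\rho,\overline\rho$ near $1$, $\|\rho^{-\gamma}\|_{L^\infty}\le C$, $r\ge c$, $\int r^{-4}dx\le C$) replace the $C(T)$-bounds. The one ingredient to re-derive is the pointwise estimate $\|\mu\rho^{-1}\rho_t\|_{L^\infty}\le C\bigl(1+\|u_t\|_{L^2}+\|u^2r\|_{L^\infty}^{1/2}\bigr)$, obtained as in (2.24): divide (1.15) by $r^2$, integrate from $x$ to $+\infty$ (again through $\tilde u_k$), substitute $\rho(r^2u)_x=-\rho^{-1}\rho_t$, and use the uniform bounds on $\rho,R$ together with $|\frac{dR}{dt}|\le C\|u^2r\|_{L^\infty}^{1/2}$. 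With this, the cubic/quadratic $\rho_t$ terms, the commutator $(\rho^\gamma)_xruu_t$ (which I would first rewrite via $-\frac{Ca}{2}(\rho^\gamma)_xr^2=u_t+\mu r^2(\log\rho)_{xt}$), the mixed viscous terms $\rho_t(r^2u)_x(r^2u_t)_x$, $\rho((r^2)_tu)_x(r^2u_t)_x$, $(\rho(r^2u)_x)_x(r^2)_tu_t$ and the analogous terms of (4.14) are all controlled by Cauchy--Schwarz so as to produce $\epsilon\mathcal D$ plus one of the integrable factors above times $E_2+E_3$ (using $\int r^{-6}dx\le C$ and $\int(\rho^{\frac{\gamma-1}{2}})_t^2dx\le CE_2$ whenever a square root must be absorbed).

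The boundary terms are the delicate point, and this is where the smallness $E_0(0)+E_1(0)\le\delta$ is genuinely used. Since $(u,\rho,R)$ also satisfies (2.10)--(2.11), Lemma 4.3 already furnishes $|(\rho|_{x=0}R^2)^{-\gamma}-1|\le C\delta^{1/2}$, the representation (4.9), and the bound (4.10) for $\mu(\rho r^2u_x)|_{x=0}R^2\frac{dR}{dt}$; the same computation bounds $(\rho^\gamma)_t|_{x=0}$ by an exponentially damped ($S(t)$) convolution against $|\frac{d\mathfrak R}{dt}|$ plus a transient. Hence the three genuinely boundary contributions $(\rho^\gamma)_t|_{x=0}R^2\frac{d^2R}{dt^2}$, $2\mu(u^2u_t)|_{x=0}$ and $3\gamma_0(\tfrac{Ca}{2}+\tfrac{2}{We})(R^{-3\gamma_0+1}-1)\frac{dR}{dt}\frac{d^2R}{dt^2}$ can be estimated by replacing $\frac{d^2R}{dt^2}=u_t|_{x=0}$ and invoking the trace inequality $\|u_t^2r\|_{L^\infty}\le\int_0^\infty|\partial_x(u_t^2r)|dx\le C\mathcal D$ (from $\partial_x(u_t^2r)=2(\rho^{-1/2}\frac{u_t}{r})(\rho^{1/2}r^2u_{xt})+\rho^{-1}\frac{u_t^2}{r^2}$): each splits into a small multiple of $\mathcal D$ plus $C_\epsilon$ times a time-integrable factor ($S(t)$, $\|u^2r\|_{L^\infty}$, or $(\frac{dR}{dt})^2\le C\|u^2r\|_{L^\infty}$) times $E_2+E_3$, the smallness of $|R^{-3\gamma_0+1}-1|\le C\delta^{1/2}$ and the $L^1$-integrability and exponential decay of $S$ (both from (4.8)) being what keeps everything finite on all of $[0,\infty)$. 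Choosing $\epsilon$ small to absorb every $\mathcal D$-piece yields the advertised inequality, and if a residual term of the form (integrable)$\cdot$(integrable) resists being written as $g(t)(E_2+E_3)$, one runs the whole argument as a continuity/bootstrap argument, using the not-yet-closed bound $\sup_t(E_2+E_3)\le M$ to convert such source terms into $g(t)(E_2+E_3)$ form and then choosing $M$ to close.

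The main obstacle is exactly this global-in-time bookkeeping of the right-hand side: unlike the bounded-domain estimates of Section 2, where any $C(T)$ factor is harmless, here every term generated at $x=0$ or by the cross terms must be split into an absorbable fraction of $\mathcal D$ plus a factor integrable over the \emph{whole} half-line, which forces the simultaneous use of the exponential decay of the ODE kernel $S(t)$ (valid only for $\delta$ small), the global control $\int_0^\infty|\frac{dR}{dt}|^2dt\le C\int_0^\infty\|u^2r\|_{L^\infty}dt$ from (4.3), the smallness of $|R-1|$, and the time-integrated dissipations of Lemmas 4.1 and 4.3, rather than the cruder time-dependent bounds used on bounded domains.
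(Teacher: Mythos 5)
Your skeleton (add the two identities of Lemma 4.5, absorb the dissipation, and run Gronwall with a coefficient built from $\|u^2r\|_{L^\infty}$, $\int(\rho^{\frac{\gamma-1}{2}})_t^2dx$ and $\int(r^2(\rho^{\frac{\gamma}{2}})_x)^2dx$, all time-integrable by (4.3), Lemma 4.1 and Lemma 4.3(ii)) is the right one, but there is a genuine gap in how you treat the boundary term of (4.14) and, consequently, in the weights with which you combine the two identities. The paper does not add (4.13) and (4.14) with equal weights: it bounds $(\rho^\gamma)_t|_{x=0}$ by the trace-type estimate (4.17), $\|(\log\rho)_t\|_{L^\infty}\le C\|\rho^{\gamma/2}r^2(\log\rho)_{xt}\|_{L^2}$, i.e.\ by the dissipation of (4.14) itself, and $\frac{d^2R}{dt^2}=u_t|_{x=0}$ by the dissipation of (4.13) via (4.16); the resulting bound (4.18) is $\lambda\int\rho^\gamma r^4(\log\rho)_{xt}^2dx+C_\lambda\bigl(\int\rho(r^2u_{xt})^2dx+2\int\rho^{-1}u_t^2r^{-2}dx\bigr)$ with $C_\lambda$ \emph{large}, which cannot be absorbed by the unweighted sum; this forces the combination $AE_2+E_3$ with $A\gtrsim C_\lambda$ in (4.32)--(4.33). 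This ``use one dissipation to kill the other's boundary term, then reweight'' step is the crux of the lemma, and your proposal has no substitute for it.

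Your substitute for the boundary estimates --- importing the ODE representation (2.10)--(2.11), the kernel $S(t)$ and (4.9)--(4.10) from Lemma 4.3, together with the smallness $|R^{-3\gamma_0+1}-1|\le C\delta^{1/2}$ --- produces, after Cauchy--Schwarz against $\mathcal D^{1/2}$, additive remainders such as $\delta S(t)^2$, $\bigl(\int_0^tS(t-\tau)|\frac{dR}{dt}|d\tau\bigr)^2$ and bare $(\frac{dR}{dt})^2\lesssim\|u^2r\|_{L^\infty}$; the same happens with the ``$1+$'' in your re-derived (2.24)-type bound for $\|\rho^{-1}\rho_t\|_{L^\infty}$ (the paper's (4.17) and (4.26), obtained by integrating the momentum equation toward $x=+\infty$ so that no boundary value at $x=0$ enters, have no such constant). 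These remainders are integrable in time but are \emph{not} of the form $g(t)(E_2+E_3)$: they are sources of size $O(\delta)$ or $O(E_0(0)+E_1(0))$, so Gronwall yields at best $E_2+E_3\le C\bigl(E_2(0)+E_3(0)+E_0(0)+E_1(0)\bigr)$, which is strictly weaker than the stated (i)--(iii), homogeneous in $E_2(0)+E_3(0)$ (think of well-prepared data with $E_2(0)+E_3(0)\ll\delta$). The bootstrap you invoke cannot repair this, since converting a source $h(t)$ into $g(t)(E_2+E_3)$ amounts to dividing by a quantity that may be arbitrarily small. The paper avoids every such source: $(\rho^\gamma)_t|_{x=0}$ is paid for by dissipation as above, $(u^2u_t)|_{x=0}$ and the $(R^{-3\gamma_0+1}-1)\frac{dR}{dt}\frac{d^2R}{dt^2}$ term are handled in (4.19)--(4.20) by expressing $R-1$ through the boundary condition (1.17) (not through $\delta^{1/2}$) and by using $(\frac{dR}{dt})^2\le CE_2$ only in products with integrable factors, so that every right-hand term of (4.32) is either absorbable dissipation or (integrable)$\times(AE_2+E_3)$. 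You need these two devices --- the dissipation-trace bound for the boundary density term and the weighted energy $AE_2+E_3$ --- for the lemma as stated to follow.
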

\begin{proof}
Lemma 4.6 is established by using (4.13)(4.14) and Gronwall's inequality. Therefore, the proof is essentially based on the estimates of the nonlinear terms in (4.13) and (4.14). According to (iii) of Lemma 4.3 and $E_0(0)+E_1(0)\lesssim 1$, $\rho$ is uniformly bounded in the sense that $\rho\leq C$ and $\rho^{-1}\leq C$, which will be used throughout the proof. \\
\textbf{Step 1.} Control of the boundary terms.\\
Similar to Lemma 2.3, we have the $L^\infty$ control for $u_t$ that \begin{equation}
\|u_t^2r\|_{L^\infty}\leq\int_0^\infty\rho(r^2u_{xt})^2dx+2\int_0^\infty\rho^{-1}\frac{u_t^2}{r^2}dx. 
\end{equation}
From 
\begin{equation}
\left|(\rho^\gamma)_t|_{x=0}\right|\leq C\|(\log\rho)_t\|_{L^\infty}\leq C\|\rho^\frac{\gamma}{2} r^2(\log\rho)_{xt}\|_{L^2}\|\rho^\frac{-\gamma}{2}r^{-2}\|_{L^2}\leq C\|\rho^\frac{\gamma}{2} r^2(\log\rho)_{xt}\|_{L^2},
\end{equation}
it follows for $\lambda>0$ to be determined that the boundary term of (4.14) can be bounded by 
\begin{equation}\begin{aligned}
&\left|(\rho^\gamma)_t|_{x=0}R^2\frac{d^2R}{dt^2}\right|\leq C\left|(\rho^\gamma)_t|_{x=0}\right|\|u_tr^\frac{1}{2}\|_{L^\infty}\\
\leq&\lambda\int_0^\infty\rho^\gamma r^4(\log\rho)_{xt}^2dx+C_\lambda\left(\int_0^\infty\rho(r^2u_{xt})^2dx
+2\int_0^\infty\rho^{-1}\frac{u_t^2}{r^2}dx\right).
\end{aligned}\end{equation}
Using $\frac{dR}{dt}=u|_{x=0}$, $r\geq R$, and $C\geq R\geq c$, one has the control for the nonlinear boundary terms in (4.13) that
\begin{equation}\begin{aligned}
\left|(u^2u_t)|_{x=0}\right|\leq\|u_t\|_{L^\infty}u^2|_{x=0}\leq\epsilon\left(\int_0^\infty\rho(r^2u_{xt})^2dx
+2\int_0^\infty\rho^{-1}\frac{u_t^2}{r^2}dx\right)+C_\epsilon \|u^2r\|_{L^\infty}\left(\frac{dR}{dt}\right)^2.
\end{aligned}\end{equation}
Next, using boundary condition (1.16), $\mu\partial_x\left(\rho(r^2u)_x\right)=\frac{u_t}{r^2}+\frac{Ca}{2}(\rho^\gamma)_x$ and $\left|\frac{dR}{dt}\right|=\left|u|_{x=0}\right|\leq\|u\|_{L^\infty}$, we have
\begin{equation}\begin{aligned}
&\left|\left(R^{-3\gamma_0+1}-1\right)\frac{dR}{dt}\frac{d^2R}{dt^2}\right|\\
\leq&\epsilon\|u_t\|_{L^\infty}^2+C_\epsilon\left(\frac{dR}{dt}\right)^2(R-1)^2\\
\leq&\epsilon\|u_t\|_{L^\infty}^2+C_\epsilon\left.\left(\frac{dR}{dt}\right)^2\left(\frac{Ca}{2}(\rho^\gamma-1)-\mu\rho r^2u_x\right)^2\right|_{x=0}\\
\leq&\epsilon\|u_t\|_{L^\infty}^2+C_\epsilon\left(\frac{dR}{dt}\right)^2\left(\|\rho-1\|_{L^\infty}^2+\|\rho(r^2u)_x\|_{L^\infty}^2+\|u^2r\|_{L^\infty}\right)\\
\leq&\epsilon\|u_t\|_{L^\infty}^2+C_\epsilon\left(\frac{dR}{dt}\right)^2\left(\|(\rho^\frac{\gamma}{2})_xr^2\|_{L^2}^2+\|u_t\|_{L^2}^2+\|u^2r\|_{L^\infty}\right)\\
\leq&\epsilon\left(\int_0^\infty\rho(r^2u_{xt})^2dx
+2\int_0^\infty\rho^{-1}\frac{u_t^2}{r^2}dx\right)+C_\epsilon\left(\frac{dR}{dt}\right)^2\left(\|(\rho^\frac{\gamma}{2})_xr^2\|_{L^2}^2+\|u^2r\|_{L^\infty}\right)\\
&+C_\epsilon\|u^2r\|_{L^\infty}\|u_t\|_{L^2}^2.
\end{aligned}\end{equation}\\
\textbf{Step 2.} The rest terms in (4.13).\\
We begin with the estimates of the terms with coefficient $\frac{Ca}{2}$. Using (4.16) and H\"older inequality, one finds
\begin{equation}\begin{aligned}
\left|\int_0^\infty(\rho^\gamma)_xruu_tdx\right|\leq&\epsilon\|u_t^2r\|_{L^\infty}+C_\epsilon\int_0^\infty\rho^{-1}\frac{u^2}{r^2}dx\int_0^\infty(\rho^\gamma)_x^2r^4dx\\
\leq& \epsilon\left(\int_0^\infty\rho(r^2u_{xt})^2dx
+2\int_0^\infty\rho^{-1}\frac{u_t^2}{r^2}dx\right)\\
&+C_\epsilon\int_0^\infty\rho^{-1}\frac{u^2}{r^2}dx\int_0^\infty\left(u_t+\mu (\log\rho)_{xt}r^2\right)^2dx.
\end{aligned}\end{equation}
Use (4.17) and (1.14) to show 
\begin{equation}\begin{aligned}
&\left|\int_0^\infty\rho^{\gamma-4}\rho_t^3dx\right|\\
\leq&\epsilon\|(\log\rho)_t\|_{L^\infty}^2+C_\epsilon\left(\int_0^\infty(\rho^\frac{\gamma-1}{2})_t^2dx\right)^2\\
\leq&\epsilon\int_0^\infty\rho^\gamma r^4(\log\rho)_{xt}^2dx+C_\epsilon\left(\int_0^\infty\rho(r^2u_x)^2dx+
2\int_0^\infty\rho^{-1}\frac{u^2}{r^2}dx\right)\int_0^\infty(\rho^\frac{\gamma-1}{2})_t^2dx,
\end{aligned}\end{equation}
and similarly
\begin{equation}\begin{aligned}
\left|\int_0^\infty\rho^{\gamma-3}\rho_t^2\frac{u}{r}dx\right|\leq\epsilon\int_0^\infty\rho^\gamma r^4(\log\rho)_{xt}^2dx+C_\epsilon\int_0^\infty\rho^{-1}\frac{u^2}{r^2}dx\int_0^\infty(\rho^\frac{\gamma-1}{2})_t^2dx.
\end{aligned}\end{equation}
Using $c\leq R\leq C$ and the equation 
$$2(ru^2)|_{x=0}+\int_0^\infty\rho(r^2u)_x^2dx=\int_0^\infty\rho(r^2u_x)^2dx
+2\int_0^\infty\rho^{-1}\frac{u^2}{r^2}dx,$$ 
we have the inequality
\begin{equation}\begin{aligned}
\int_0^\infty\rho(r^2u_x)^2dx+2\int_0^\infty\rho^{-1}\frac{u^2}{r^2}dx\leq& C\left(\int_0^\infty(\rho^\frac{\gamma-1}{2})_t^2dx+\left(\frac{dR}{dt}\right)^2\right),
\end{aligned}\end{equation}
and thus the last term is controlled by
\begin{equation}\begin{aligned}
&\left|\int_0^\infty\rho^{\gamma-2}\rho_t\frac{u^2}{r^2}dx\right|\\
\leq&\epsilon\int_0^\infty\rho^\gamma r^4(\log\rho)_{xt}^2dx+C_\epsilon\left(\int_0^\infty\rho^{-1}\frac{u^2}{r^2}dx\right)^2\\
\leq&\epsilon\int_0^\infty\rho^\gamma r^4(\log\rho)_{xt}^2dx+C_\epsilon\left(\int_0^\infty\rho^{-1}\frac{u^2}{r^2}dx\right)\left(\int_0^\infty(\rho^\frac{\gamma-1}{2})_t^2dx+\left(\frac{dR}{dt}\right)^2\right).
\end{aligned}\end{equation}
For the terms with coefficient $\mu$, first note that by $\mu\partial_x(\rho(r^2u)_x)=\frac{u_t}{r^2}+\frac{Ca}{2}(\rho^\gamma)_x$, one has
\begin{equation}
\|(\log\rho)_t\|^2_{L^\infty}=\|\rho(r^2u)_x\|^2_{L^\infty}\leq C\left(\|u_t\|_{L^2}^2+\|(\rho^\gamma)_xr^2\|_{L^2}^2\right).
\end{equation}
Then using (4.26), it holds
\begin{equation}\begin{aligned}
&\left|\int_0^\infty\rho_t(r^2u)_x(r^2u_t)_xdx\right|\\
\leq&\epsilon\int_0^\infty\rho(r^2u_t)_x^2dx+C_\epsilon\|\rho(r^2u)_x\|_{L^\infty}^2\int_0^\infty(\rho^\frac{\gamma-1}{2})_t^2dx\\
\leq&\epsilon\left(\int_0^\infty\rho(r^2u_{xt})^2dx
+2\int_0^\infty\rho^{-1}\frac{u_t^2}{r^2}dx\right)+C_\epsilon\left(\|u_t\|_{L^2}^2+\|(\rho^\gamma)_xr^2\|_{L^2}^2\right)\int_0^\infty(\rho^\frac{\gamma-1}{2})_t^2dx\\
\leq&\epsilon\left(\int_0^\infty\rho(r^2u_{xt})^2dx
+2\int_0^\infty\rho^{-1}\frac{u_t^2}{r^2}dx\right)+C_\epsilon\int_0^\infty(\rho^\frac{\gamma}{2})_x^2r^4dx\int_0^\infty(\rho^\frac{\gamma-1}{2})_t^2dx\\
&+C_\epsilon\left(\int_0^\infty\rho(r^2u_x)^2dx
+2\int_0^\infty\rho^{-1}\frac{u^2}{r^2}dx\right)\int_0^\infty u_t^2dx.
\end{aligned}\end{equation}
Using $r_t=u$, (4.24) and $(ru^2)_x=-3\rho^{-1}\frac{u^2}{r^2}+2\frac{u}{r}(r^2u)_x$, it follows
\begin{equation}\begin{aligned}
&\left|\int_0^\infty\rho((r^2)_tu)_x(r^2u_t)_xdx\right|\\
\leq&\epsilon\left(\int_0^\infty\rho(r^2u_{xt})^2dx
+2\int_0^\infty\rho^{-1}\frac{u_t^2}{r^2}dx\right)+C_\epsilon\int_0^\infty\rho(ru^2)_x^2dx\\
\leq&\epsilon\left(\int_0^\infty\rho(r^2u_{xt})^2dx
+2\int_0^\infty\rho^{-1}\frac{u_t^2}{r^2}dx\right)+C_\epsilon
\|u^2r\|_{L^\infty}\left(\int_0^\infty\rho(r^2u)_x^2dx+\int_0^\infty\rho^{-1}\frac{u^2}{r^2}dx\right)\\
\leq&\epsilon\left(\int_0^\infty\rho(r^2u_{xt})^2dx
+2\int_0^\infty\rho^{-1}\frac{u_t^2}{r^2}dx\right)+C_\epsilon
\|u^2r\|_{L^\infty}\left(\int_0^\infty(\rho^\frac{\gamma-1}{2})_t^2dx+\left(\frac{dR}{dt}\right)^2\right).
\end{aligned}\end{equation}
Using equation (1.14), the last term can be controlled easily
\begin{equation}\begin{aligned}
\left|\int_0^\infty\left(\rho(r^2u)_x\right)_x(r^2)_tu_tdx\right|=&2\left|\int_0^\infty(\log\rho)_{xt}r^2\frac{u}{r}u_tdx\right|\\
\leq&\epsilon\int_0^\infty\rho^\gamma r^4(\log\rho)_{xt}^2dx+C_\epsilon\int_0^\infty \frac{u^2}{r^2}u_t^2dx\\
\leq&\epsilon\int_0^\infty\rho^\gamma r^4(\log\rho)_{xt}^2dx+C_\epsilon\|u^2r\|_{L^\infty}\int_0^\infty u_t^2dx.
\end{aligned}\end{equation}
\textbf{Step 3.} The rest terms in (4.14).\\
Since the terms with coefficient $\frac{Ca}{2}$ are the same as in (4.13), it suffices to estimate the rest two terms.
Noting that $\frac{Ca}{2}(\rho^\gamma)_xr^2=u_t+\mu r^2(\log\rho)_{xt}$, it holds by (4.26) that
\begin{equation}\begin{aligned}
&\left|\int_0^\infty r^4(\rho^\gamma)_x(\log\rho)_t(\log\rho)_{xt}dx\right|\\
\leq&\epsilon\int_0^\infty\rho^\gamma r^4(\log\rho)_{xt}^2dx+C_\epsilon\|(\log\rho)_t\|_{L^\infty}^2\int_0^\infty(\rho^\gamma)_x^2r^4dx\\
\leq&\epsilon\int_0^\infty\rho^\gamma r^4(\log\rho)_{xt}^2dx+C_\epsilon\int_0^\infty(\rho^\frac{\gamma}{2})_x^2 r^4dx\left(\int_0^\infty u_t^2dx+\int_0^\infty\left(u_t+\mu r^2(\log\rho)_{xt}\right)^2dx\right).
\end{aligned}\end{equation}
The other term can be bounded easily
\begin{equation}\begin{aligned}
&\left|\int_0^\infty(\rho^\gamma)_xr^3u(\log\rho)_{xt}dx\right|\leq\epsilon\int_0^\infty\rho^\gamma r^4(\log\rho)_{xt}^2dx+C_\epsilon\|u^2r\|_{L^\infty}\int_0^\infty\left(u_t+\mu r^2(\log\rho)_{xt}\right)^2dx.
\end{aligned}\end{equation}
Now, choose positive $\lambda>0$, $A>0$ such that $\lambda\leq\frac{\mu\gamma}{3}$ and $A\geq 3\mu^{-1}\frac{Ca}{2}C_\lambda$. Then by collecting (4.18)-(4.31), we find that
\begin{equation}\begin{aligned}
&\frac{d}{dt}(AE_2+E_3)+\frac{2A\mu}{3}\left(\int_0^\infty\rho(r^2u_{xt})^2dx+2\int_0^\infty\rho^{-1}\frac{u_t^2}{r^2}dx\right)+\frac{Ca}{3}\mu\gamma\int_0^\infty\rho^\gamma r^4(\log\rho)_{xt}^2dx\\
\leq&\epsilon(A+1)\left(\int_0^\infty\rho(r^2u_{xt})^2dx+2\int_0^\infty\rho^{-1}\frac{u_t^2}{r^2}dx+\int_0^\infty\rho^\gamma r^4(\log\rho)_{xt}^2dx\right)\\
&+C_\epsilon(A+1)\left(\int_0^\infty\rho(r^2u)_x^2dx+\int_0^\infty\rho^{-1}\frac{u^2}{r^2}dx+\int_0^\infty r^4(\rho^\frac{\gamma}{2})_x^2dx\right)(AE_2+E_3).
\end{aligned}\end{equation}
Choose $\epsilon>0$ small such that $\epsilon(A+1)\leq\min\left\{\frac{A\mu}{3},\frac{Ca}{6}\mu\gamma\right\}$. Then (4.32) becomes 
\begin{equation}\begin{aligned}
&\frac{d}{dt}(AE_2+E_3)+\frac{A\mu}{3}\left(\int_0^\infty\rho(r^2u_{xt})^2dx+2\int_0^\infty\rho^{-1}\frac{u_t^2}{r^2}dx\right)+\frac{Ca}{6}\mu\gamma\int_0^\infty\rho^\gamma r^4(\log\rho)_{xt}^2dx\\
\leq& C\left(\int_0^\infty\rho(r^2u)_x^2dx+\int_0^\infty\rho^{-1}\frac{u^2}{r^2}dx+\int_0^\infty r^4(\rho^\frac{\gamma}{2})_x^2dx\right)(AE_2+E_3).
\end{aligned}\end{equation}
 Hence (i)(ii)(iii) follow from applying Gronwall's inequality to (4.33) in view of Lemma 4.1 and Lemma 4.3.
\end{proof}
\begin{cor} Suppose $E_0(0)$ and $E_1(0)$ satisfy the same assumptions as in Lemma 4.3. Then there exists $C>0$ such that\\
(i) $\int_0^\infty E_3(t)dt\leq C(E_0(0)+E_1(0))$,\\
(ii) $\int_0^\infty E_2(t)dt\leq C(E_0(0)+E_1(0)+E_2(0)+E_3(0))$.
\end{cor}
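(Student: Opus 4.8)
The plan is to bound, term by term, the time-integrals of $E_2$ and $E_3$ by space-time dissipation integrals that are already controlled in Lemma 4.1, Lemma 4.3 and Lemma 4.6. The crucial algebraic fact is that combining (1.14) with (1.15) gives the pointwise identity
\[
u_t+\mu r^2(\log\rho)_{xt}=-\frac{Ca}{2}r^2(\rho^\gamma)_x=-Ca\,\rho^{\frac{\gamma}{2}}\,r^2(\rho^{\frac{\gamma}{2}})_x,
\]
hence $\left(u_t+\mu r^2(\log\rho)_{xt}\right)^2=Ca^2\rho^\gamma\left(r^2(\rho^{\frac{\gamma}{2}})_x\right)^2$. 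By the uniform-in-time bounds $\rho,\rho^{-1}\le C$ of Lemma 4.3 (iii), this is $\le C\left(r^2(\rho^{\frac{\gamma}{2}})_x\right)^2$, so integrating in $x$ and $t$ and invoking Lemma 4.3 (ii) yields $\int_0^\infty\!\!\int_0^\infty\left(u_t+\mu r^2(\log\rho)_{xt}\right)^2dxdt\le C(E_0(0)+E_1(0))$.

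For the term $\int_0^\infty(\rho^{\frac{\gamma-1}{2}})_t^2dx$, which appears in both $E_2$ and $E_3$, I would use (1.14) together with $\rho(r^2u)_x=\rho r^2u_x+2r^{-1}u$ to write $(\rho^{\frac{\gamma-1}{2}})_t=-\frac{\gamma-1}{2}\rho^{\frac{\gamma+1}{2}}(r^2u)_x$, and then, using $\rho,\rho^{-1}\le C$, obtain the pointwise bound $(\rho^{\frac{\gamma-1}{2}})_t^2\le C\left(\rho(r^2u_x)^2+\rho^{-1}\frac{u^2}{r^2}\right)$; integrating in $x$ and $t$ and applying the basic energy identity of Lemma 4.1 gives $\int_0^\infty\!\!\int_0^\infty(\rho^{\frac{\gamma-1}{2}})_t^2dxdt\le CE_0(0)$. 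Together with the previous bound this already proves (i).

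It remains, for (ii), to bound $\int_0^\infty\!\!\int_0^\infty u_t^2dxdt$ and $\int_0^\infty\left(\frac{dR}{dt}\right)^2dt$. For the boundary term, since $\frac{dR}{dt}=u|_{x=0}$ and $r\ge R\ge c$ we have $\left(\frac{dR}{dt}\right)^2\le R^{-1}\|u^2r\|_{L^\infty}\le C\|u^2r\|_{L^\infty}$, and (4.3) controls its time-integral by $CE_0(0)$. For the velocity term I would split $u_t^2\le 2\left(u_t+\mu r^2(\log\rho)_{xt}\right)^2+2\mu^2r^4(\log\rho)_{xt}^2$: the first piece is already bounded by $C(E_0(0)+E_1(0))$ by the first step, while for the second piece the bound $\rho^{-\gamma}\le C$ of Lemma 4.3 (iii) gives $r^4(\log\rho)_{xt}^2\le C\rho^\gamma r^4(\log\rho)_{xt}^2$, whose space-time integral is $\le C(E_2(0)+E_3(0))$ by Lemma 4.6 (iii). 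Collecting the three contributions yields (ii).

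Each step reduces directly to an already-established dissipation estimate, so there is no genuine obstacle; the only point needing care is the systematic use of the uniform density bounds of Lemma 4.3 (iii) to convert the $\rho$-weighted dissipation norms (carrying weights like $\rho$, $\rho^{-1}$, $\rho^{\pm\gamma}$) into the unweighted quantities in $E_2$ and $E_3$ — one should check in each instance that the relevant power of $\rho$ can be absorbed using only $\rho\le C$ and $\rho^{-1}\le C$.
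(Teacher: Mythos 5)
Your proposal is correct and follows essentially the same route as the paper: the identity $u_t+\mu r^2(\log\rho)_{xt}=-\frac{Ca}{2}r^2(\rho^\gamma)_x$ combined with Lemma 4.3 (ii)--(iii) for the BD part, the bound of $\int_0^\infty\int_0^\infty(\rho^{\frac{\gamma-1}{2}})_t^2\,dx\,dt$ by the dissipation in Lemma 4.1, and the splitting of $u_t$ together with Lemma 4.6 (iii) for (ii). Your explicit control of the $\left(\frac{dR}{dt}\right)^2$ contribution via $\left(\frac{dR}{dt}\right)^2\leq C\|u^2r\|_{L^\infty}$ and (4.3) fills a step the paper leaves implicit, and it is done correctly.
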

\begin{proof}
The equation (1.15) yields that
$$
\int_0^\infty\int_0^\infty(u_t+\mu r^2(\log\rho)_{xt})^2dxdt=\left(\frac{Ca}{2}\right)^2\int_0^\infty\int_0^\infty(\rho^\gamma)_x^2r^4dxdt.
$$
Hence by (ii)(iii) of Lemma 4.3,
\begin{equation}
\int_0^\infty\int_0^\infty(u_t+\mu r^2(\log\rho)_{xt})^2dxdt\leq C\int_0^\infty\int_0^\infty(\rho^\frac{\gamma}{2})_x^2 r^4dxdt
\leq C(E_0(0)+E_1(0)).
\end{equation}
Meanwhile, using Lemma 4.1 and $\rho\approx 1$ again, it follows from 
$$\int_0^\infty\int_0^\infty\rho(r^2u)_x^2dxdt
\leq C\left(\int_0^\infty\int_0^\infty\rho(r^2u_x)^2dxdt+
2\int_0^\infty\int_0^\infty\rho^{-1}\frac{u^2}{r^2}dxdt\right)$$
that
\begin{equation}\begin{aligned}
\int_0^\infty\int_0^\infty(\rho^\frac{\gamma-1}{2})_t^2dxdt\leq C\int_0^\infty\int_0^\infty\rho(r^2u)_x^2dxdt
\leq CE_0(0).
\end{aligned}\end{equation}
(i) is a direct consequence of (4.34) and (4.35). To show (ii), write $u_t=(u_t+\mu r^2(\log\rho)_{xt})-\mu r^2(\log\rho)_{xt}$. Then (ii) follows from (4.34)(4.35), (iii) of Lemma 4.6 and $\rho\approx 1$.
\end{proof}
To finish the proof of Theorem 1.3, the following lemma is necessary.
\begin{lem}
Suppose that $E(t)>0$ and $\alpha(t)>0$ such that $\int_0^\infty E(t)dt\leq+\infty$, $\int_0^\infty\alpha dt\leq+\infty$, $E(0)<+\infty$ and $\frac{d}{dt}E\leq \alpha E$. Then there exists constant $C>0$ such that $E(t)\leq C(1+t)^{-1}$.
\end{lem}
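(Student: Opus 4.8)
The plan is to combine the differential inequality $\frac{d}{dt}E\le\alpha E$ with the two integrability hypotheses through an averaging (pigeonhole) argument. First I would integrate the inequality: Gronwall's lemma gives, for every $0\le s\le t$,
\[
E(t)\le E(s)\exp\!\left(\int_s^t\alpha(\tau)\,d\tau\right)\le e^{A}E(s),\qquad A:=\int_0^\infty\alpha(\tau)\,d\tau<\infty .
\]
In particular $E$ can grow only by the fixed factor $e^{A}$, so $E(t)\le e^{A}E(0)$ for all $t\ge0$; this already disposes of any bounded time interval.

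Next, set $B:=\int_0^\infty E(\tau)\,d\tau<\infty$. For $t\ge2$ the interval $[t/2,t]$ has length $t/2$, and since $E\ge0$ one has $\int_{t/2}^{t}E\le B$; hence there is a point $s=s(t)\in[t/2,t]$ with $E(s)\le \frac{2B}{t}$ (otherwise $\int_{t/2}^{t}E$ would exceed $B$). Inserting this $s$ into the Gronwall bound above yields
\[
E(t)\le e^{A}E(s)\le\frac{2e^{A}B}{t}\le\frac{6e^{A}B}{1+t}.
\]
For $t\in[0,2]$ we instead use $E(t)\le e^{A}E(0)\le 3e^{A}E(0)\,(1+t)^{-1}$, since $(1+t)^{-1}\ge\frac13$ there. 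Combining the two ranges proves the lemma with $C=\max\{6e^{A}B,\ 3e^{A}E(0)\}$, a constant depending only on the finite quantities $\int_0^\infty\alpha$, $\int_0^\infty E$ and $E(0)$.

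The argument is elementary and I do not expect a genuine obstacle. The only point needing a little care is the choice of the averaging window $[t/2,t]$: its length must be comparable to $t$, which is exactly what turns the finiteness of $\int_0^\infty E$ into the decay rate $t^{-1}$ (a window of fixed length would give no decay, a window like $[t-\sqrt{t},t]$ would give only $t^{-1/2}$), while the finiteness of $\int_0^\infty\alpha$ is what makes the forward-in-time Gronwall step from $s(t)$ to $t$ cost merely a bounded multiplicative constant.
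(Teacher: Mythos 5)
Your proof is correct, but it takes a different route from the paper. The paper's argument is a one-line weighted Gronwall: it computes $\frac{d}{dt}\bigl((1+t)E\bigr)\leq\alpha(1+t)E+E$ and applies Gronwall's inequality directly to the weighted quantity, obtaining $(1+t)E(t)\leq\exp\left(\int_0^\infty\alpha\,dt\right)\left(E(0)+\int_0^\infty E\,dt\right)$ in one stroke. You instead use Gronwall only to establish the quasi-monotonicity $E(t)\leq e^{A}E(s)$ for $s\leq t$, and then extract the decay by a pigeonhole argument on the window $[t/2,t]$, where $\int_0^\infty E\,dt<\infty$ forces a good point $s(t)$ with $E(s(t))\lesssim t^{-1}$. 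Both yield constants of the same nature (depending on $E(0)$, $\int_0^\infty E$, $\int_0^\infty\alpha$). The paper's version is shorter and gives the cleaner explicit constant $e^{A}(E(0)+B)$; your averaging argument is somewhat more robust, since it only uses the consequence ``$E$ can grow by at most a fixed factor forward in time'' and would survive settings where the differential inequality holds only in an integrated or almost-everywhere sense. One small point of rigor: when you claim there exists $s\in[t/2,t]$ with $E(s)\leq 2B/t$, the strict-inequality contradiction as written only yields $\int_{t/2}^t E\geq B$, not $>B$; this is harmless (argue via the essential infimum, or with $3B/t$ in place of $2B/t$), but worth tightening.
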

\begin{proof}
A direct computation gives $\frac{d}{dt}\left((1+t)E\right)\leq\alpha (1+t)E+E$. Then Gronwall's inequality yields that $(1+t)E(t)\leq \exp(\int_0^\infty\alpha dt)\left(E(0)+\int_0^\infty Edt\right)$.
\end{proof}
\noindent \textbf{Proof of Theorem 1.3.}
With the help of Lemma 4.1, Lemma 4.3 and corollary 4.7, applying Lemma 4.8 to (4.33) yields that
$$
\|u_t\|_{L^2}^2+\|r^2\rho_x\|_{L^2}^2+\|\rho_t\|_{L^2}^2+\left(\frac{dR}{dt}\right)^2\leq C(1+t)^{-1}.
$$
Hence the decay of $\|r^2\rho_x\|_{L^2}$, $\|r^2u_x\|_{L^2}$ and $\left\|\frac{u}{r}\right\|_{L^2}$ follow from (4.24). To prove (1.22), it remains to show $(R-1)^2\lesssim (1+t)^{-1}$. In view of $R\approx 1$ and $\rho\approx 1$, (1.17) gives that
$$(R-1)^2\leq C\left(\|\rho-1\|_{L^\infty}^2+\|(\log\rho)_t\|_{L^\infty}^2+\left(\frac{dR}{dt}\right)^2\right).$$
The first two terms on the right-hand side are controlled by using Sobolev embedding $\|\rho-1\|_{L^\infty}^2\lesssim\|r^2\rho_x\|_{L^2}^2$ and $\|(\log\rho)_t\|_{L^\infty}^2\lesssim\|r^2(\log\rho)_{xt}\|_{L^2}^2\lesssim\|r^2\rho_x\|_{L^2}^2+\|u_t\|_{L^2}^2$. Since $\|r^2\rho_x\|_{L^2}^2$, $\|u_t\|_{L^2}^2$ and $\left(\frac{dR}{dt}\right)^2$ all decay with speed $(1+t)^{-1}$, it follows $(R-1)^2\lesssim (1+t)^{-1}$.
\bibliography{Bubble}
\end{document}